\documentclass{amsart}
\usepackage[utf8]{inputenc}
\usepackage{amssymb}
\usepackage{amsmath}
\usepackage{amsthm}
\usepackage{hyperref}
\usepackage[capitalise]{cleveref}
\usepackage{graphicx}
\usepackage{bbm}
\usepackage{nameref}
\usepackage{verbatim}
\usepackage{csquotes}
\usepackage{natbib}
\usepackage{mathtools}
\usepackage{xcolor}
\usepackage{aliascnt}

\title{Rigidity phenomena for time changes of products of Anosov flows}
\author{Amadeus Maldonado} 
\address{Department of Mathematics, Northwestern University, Evanston, IL 60208, USA}
\email{amadeusmaldonado2027@u.northwestern.edu}

\author{Miri Son}
\address{Department of Mathematics, Rice University, Houston, TX 77005, USA}
\email{ms235@rice.edu}

\numberwithin{equation}{section}

\newtheorem{theorem}{Theorem}[section]

\newaliascnt{Lemma}{theorem}
\newtheorem{Lemma}[Lemma]{Lemma}
\aliascntresetthe{Lemma}

\newaliascnt{proposition}{theorem}
\newtheorem{proposition}[proposition]{Proposition}
\aliascntresetthe{proposition}

\newaliascnt{corollary}{theorem}

\aliascntresetthe{corollary}

\newaliascnt{definition}{theorem}
\newtheorem{definition}[definition]{Definition}
\aliascntresetthe{definition}

\newaliascnt{remark}{theorem}
\newtheorem{remark}[remark]{Remark}
\aliascntresetthe{remark}

\newtheorem{mainthm}{Theorem}

\newtheorem{maincor}[mainthm]{Corollary}

\Crefname{Lemma}{Lemma}{Lemmas}

\Crefname{theorem}{Theorem}{Theorems}

\Crefname{corollary}{Corollary}{Corollaries}

\Crefname{definition}{Definition}{Definitions}

\newcommand{\R}{\mathbb{R}}

\newcommand{\N}{\mathbb{N}}

\begin{document}

\begin{abstract}
    Our main results establish two rigidity phenomena in the class of time changes of a fixed product of Anosov flows.
    Our first result shows that two time changes having the same stabilizers for all periodic orbits are conjugate up to automorphism.
    The second rigidity result proves that if the stabilizers of periodic orbits can be simultaneously diagonalized, then the time change is conjugate to a product of flows up to automorphism.
    We allow our time changes to be H\"older continuous, which by structural stability implies that our results hold for $C^1$ perturbations of products of Anosov flows.

    We apply our main results to $C^1$ time changes of products of Anosov flows on $3$-dimensional manifolds.
    For such actions, we show that being totally Anosov, being conjugate to a product of flows and having the kernels of Lyapunov functionals not depend on the periodic orbits are all equivalent properties.
    We also build counterexamples to the Katok-Spatzier conjecture as time changes of products of any transitive Anosov flows, extending the main result of \cite{Vinhage} beyond the continuously accessible case.
\end{abstract}

\maketitle

\section{Introduction}\label{sec:introduction}

Let $\alpha\colon\R^k\curvearrowright M$ be a locally free $C^s$ action for $s\geq 1$ on a smooth manifold $M$, equipped with a Riemannian metric defining a norm $\|\cdot\|$ on the tangent bundle, and let $\mathcal{O}$ denote its orbit foliation. 
We say $a \in \mathbb{R}^k$ is an \emph{Anosov element} if there exists an $\R^k$-invariant splitting of the tangent bundle $TM=E^u_a\oplus E^s_a\oplus T\mathcal{O}$ into nontrivial subbundles and constants $\lambda,C>0$ such that for all $t>0$,
    \begin{equation*}
        \|D\alpha(ta)|_{E^s_a}\|\leq Ce^{-\lambda t}\quad \text{and}\quad \|D\alpha(-ta)|_{E^u_a}\|\leq Ce^{-\lambda t}.
    \end{equation*}
    We say that the action $\alpha$ is \emph{Anosov} if it has at least one Anosov element. If the set of Anosov elements is dense in $\R^k$, $\alpha$ is \emph{totally Anosov}.
    The dimension $k$ is also called the \emph{rank} of the action.
    It is a \emph{flow} if $k=1$ and a \emph{higher rank action} if $k \geq 2$.

    Higher rank Anosov actions often display many rigidity properties, in contrast to their rank $1$ counterparts.
A series of papers by Katok and Spatzier have studied several such actions arising from  algebraic settings.
These actions enjoy cocycle rigidity (\cite{KatokSpatzier}), measure rigidity (\cite{KatokSpatziermeasure}) and local smooth rigidity (\cite{KSsmoothrigid}).
On the other hand, products of Anosov flows, although technically higher rank, will inherit non rigid properties coming from its rank $1$ parts.

This led to the famous Katok--Spatzier conjecture, which claims that, in the absence of rank $1$ factors, higher rank Anosov actions are algebraic.
The conjecture was proven by Spatzier and Vinhage in  \cite{SpatzierVinhage} with the added assumptions of totally Cartan and cone transitivity.
However, the conjecture as originally stated is false.
In \cite{Vinhage}, Vinhage builds a family of Anosov $\mathbb{R}^2$ actions which have no rank $1$ factors and can not be made algebraic.
He does so by constructing nontrivial time changes of products of Anosov flows, the main subject of this paper.

\subsection{Main results}

Let us first fix some notation and definitions.
Let $\alpha \colon \mathbb{R}^k \curvearrowright M $ and $\beta \colon \mathbb{R}^k \curvearrowright N$ be locally free $C^{s}$ actions on compact smooth manifolds $M$ and $N$ for some $1 \leq s \leq \infty$.
For $0 \leq r \leq s$, we say that $\alpha$ and $\beta$ are \emph{$C^r$ conjugate up to automorphism} if there exists a $C^r$ homeomorphism $h \colon M \to N$ with $C^r$ inverse and $C \in GL(k,\mathbb{R})$ satisfying $h(\alpha(a)x) = \beta(Ca)h(x)$ for all $a \in \mathbb{R}^k$ and $x \in M$.
If $C = I$, we say they are \emph{conjugate}.

We now define a time change action, which has the same orbit foliation as the original action but a different parameterization along the orbit. 

\begin{definition}
    Consider two locally free $\R^k$-actions $\alpha_0$ and $ \alpha$ on a manifold $M$. We say that an action $\alpha$ is a $C^s$ time change of $\alpha_0$ if there exists a $C^s$ map $\varphi\colon\R^k\times M\rightarrow\R^k$ defined by $(a,x)\mapsto\varphi_x(a)$ such that $$\alpha(a)x=\alpha_0(\varphi_x(a))x,$$ and for each $x\in M$, the map $\varphi_x(\cdot)$ is a $C^s$ homeomorphism with $C^s$ inverse of $\R^k$. We say that $\alpha$ is a constant time change of $\alpha_0$ if there exists $C\in \mathrm{GL}(k,\R)$ such that $$\alpha(a)x=\alpha_0(Ca)x.$$
\end{definition}

Let $f_i$ be $C^s$ transitive Anosov flows on compact manifolds $X_i$. Consider the $\R^k$-action $\alpha_0=\Pi_{i=1}^k f_i$ on the product manifold $M=\Pi_{i=1}^k X_i$ defined by $$\alpha_0(t_1,...,t_k)(x_1,...,x_k)=(f_1^{t_1}x_1,...,f_k^{t_k}x_k).$$
An element $a = (t_1,...,t_k) \in \mathbb{R}^k$ is Anosov for $\alpha_0$ if, and only if, $t_i \neq 0$ for all $i =1,...,k$.

We first establish periodic orbit stabilizer rigidity for time changes of $\alpha_0$.
This may be viewed as a higher rank analogue of the marked length spectrum rigidity. In the rank one case $k=1$, since we start with a time change action, the conclusion follows directly from Livshitz. In contrast, the higher rank case is not a direct consequence. Further details are given in \cref{sec:cocycle}.

A point $p\in M$ is called \emph{periodic} for an $\R^k$-action $\alpha$ if its orbit $\alpha(\R^k)p$ is closed. We denote by $\text{Per}(\alpha)$ the set of periodic points of  $\alpha$. 
For each periodic point $p\in M$ of a locally free action $\alpha$, the stabilizer $\text{Stab}_{\alpha}(p)$ is a lattice in $\R^k$. Let $\Delta$ denote the set of lattices in $\R^k$. Define the \emph{marked stabilizer spectrum} function of $\alpha$ $$l_{\alpha}\colon \text{Per}(\alpha)\rightarrow\Delta,$$ by assigning to each periodic point $p$ its stabilizer $\text{Stab}_{\alpha}(p)$.
If $\alpha$ is a time change of $\beta$, note that $\text{Per}(\alpha) = \text{Per}(\beta)$. This implies $l_{\alpha}$ and $l_{\beta}$ are functions on the same space.

\begin{mainthm}
\label{thm1}
    Consider two $C^r$ time change actions $\alpha_1$ and $\alpha_2$ of $\alpha_0$, for $r\in(0,s]$. If there exists $C \in GL(k,\mathbb{R})$ such that $l_{\alpha_1} = C \cdot  l_{\alpha_2}$, then $\alpha_1$ is $C^{r_*}$ conjugate to $\alpha_2$ up to automorphism, where $r_* = r-1+\text{Lip}$ if $r \in \mathbb{N}\setminus \{1\}$ and $r_* = r$ otherwise.
\end{mainthm}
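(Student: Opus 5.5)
We recast both time changes as cocycles over $\alpha_0$ and reduce the statement to a Livshitz-type cohomology problem for $\R^k$-valued cocycles. First replace $\alpha_2$ by the constant time change $\alpha_2'(a)=\alpha_2(Ca)$. Its stabilizers satisfy $\mathrm{Stab}_{\alpha_2'}(p)=C^{-1}\mathrm{Stab}_{\alpha_2}(p)$. After adjusting conventions for $C$, this gives $l_{\alpha_1}=l_{\alpha_2'}$, so it suffices to treat $C=I$.

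\textbf{Step 1: cocycles.} Write $\alpha_j(a)x=\alpha_0(\varphi^j_x(a))x$. Let $\psi^j\colon\R^k\times M\to\R^k$ be the inverse time change, defined by $\alpha_0(b)x=\alpha_j(\psi^j_x(b))x$. Then $\psi^j$ is a $C^r$ cocycle over $\alpha_0$. At a periodic point $p$ of $\alpha_0$ with stabilizer $\Lambda_0(p)$, the map $b\mapsto\psi^j_p(b)$ restricted to $\Lambda_0(p)$ is a group isomorphism onto $\mathrm{Stab}_{\alpha_j}(p)$. The hypothesis says the two isomorphisms have the same image lattice. It does not say a priori that they agree.

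\textbf{Step 2: periodic data agree up to a lattice automorphism.} We show that $\psi^1_p=\psi^2_p$ on $\Lambda_0(p)$ for all periodic $p$, possibly after composing with a global constant. The key observation is that the combined map
$$\psi^2_p\circ(\psi^1_p)^{-1}\colon \mathrm{Stab}_{\alpha_1}(p)\to\mathrm{Stab}_{\alpha_1}(p)$$
is an element of $GL(\mathrm{Stab})\cong GL(k,\mathbb Z)$ depending on $p$. I would prove that it is the identity as follows.
\begin{itemize}
\item Fix an Anosov element $b\in\Lambda_0(p)$ and the rank one flows. By transitivity of each $f_i$ and the Anosov closing/specification property of $\alpha_0$, choose periodic points $q$ shadowing $p$ for a long time $nb$ and then another orbit for a bounded time.
\item The element $n b+$(bounded) lies in $\Lambda_0(q)$. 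H\"older continuity gives $\psi^j_q(nb+O(1))=n\psi^j_p(b)+O(1)$.
\item The integer matrix relating the two markings at $q$ must therefore nearly fix $n\psi^1_p(b)$ up to $O(1)$, and likewise for the other generators. Letting $n\to\infty$ and varying $b$ over a basis, the matrices are forced to be the identity on directions approximately given by $p$'s data.
\item A connectedness/continuity argument over the dense set of periodic points then yields a single constant matrix. Since the matrices are integral and locally constant in the above sense, any leftover constant can be absorbed into the automorphism.
\end{itemize}
\textbf{This step is the main obstacle.} It is exactly where the higher rank case differs from Livshitz, since one must rule out periodic-point-dependent $GL(k,\mathbb Z)$ twists. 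I would first try the case $k=2$ with products of suspension-like shadowing, using the product structure so that each coordinate can be shadowed independently.

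\textbf{Step 3: Livshitz and conjugacy.} Once $\psi^1$ and $\psi^2$ agree on all periodic data, the difference $\psi^1-\psi^2$ is an $\R^k$-valued H\"older cocycle over the Anosov action $\alpha_0$ that vanishes on every periodic orbit. Apply the higher rank Livshitz theorem for products of transitive Anosov flows, which follows from the rank one Livshitz theorem applied coordinatewise via the commuting structure. This produces $u\colon M\to\R^k$ with
$$\psi^1_x(b)-\psi^2_x(b)=u(\alpha_0(b)x)-u(x).$$
Set $h(x)=\alpha_1(u(x))x$, or $\alpha_0$ of the appropriate transfer, so that $h\circ\alpha_2(a)=\alpha_1(a)\circ h$. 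The map $h$ is a homeomorphism because it is homotopic to the identity along orbits and injective by the cocycle identity.

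Regularity follows from the smooth Livshitz regularity results, giving $u$ of class $C^{r_*}$. The loss to $r-1+\mathrm{Lip}$ at integers comes from the regularity of solutions in the Journ\'e-type step.
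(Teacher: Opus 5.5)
\textbf{Gap: Step 2.} You flag it yourself, but the sketch you give would not close it, and its target is false in general.

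Your third bullet presupposes $\psi^1_p=\psi^2_p$. What closing plus H\"older continuity actually gives is $G_q\,\psi^1_p(b)=\psi^2_p(b)+O(1/n)$, i.e.\ $G_q\approx G_p$ as real matrices. But $G_q$ is integral only with respect to a basis of $\mathrm{Stab}_{\alpha_1}(q)$, whose periods are of order $n\,l(p_i)$, so approximate agreement pins nothing down. Moreover, the set of periodic orbits is countable and has no connectedness along which a ``locally constant'' integer matrix could propagate.

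The missing idea is to use the product structure cohomologically, as the paper does. By \cref{decompprop}, $\psi^j\approx^r\sum_i\psi^{j,i}$ with $\psi^{j,i}$ a cocycle over $f_i$. Hence $\mathcal{P}^{\psi^j}_{(p_1,\dots,p_k)}(e_i)=\mathcal{P}^{\psi^{j,i}}_{p_i}(1)$ depends only on $p_i$, and the other coordinates can be varied freely. Choose $p_j$, $j\neq i$, with $l(p_j)>2S^2l(p_i)$, where $S$ bounds all period maps and their inverses. Then the lattice vector $(\mathcal{P}^{\psi^1}_p)^{-1}\mathcal{P}^{\psi^2}_p(l(p_i)e_i)\in\bigoplus_j l(p_j)e_j\mathbb{Z}$ has norm at most $S^2l(p_i)$, so it lies in $l(p_i)e_i\mathbb{Z}$, and by symmetry its coefficient is $\pm1$ (\cref{plusminusLemma}). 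So the twist is exactly $\mathcal{P}^{\psi^{2,i}}_{p_i}=\epsilon_i(p_i)\,\mathcal{P}^{\psi^{1,i}}_{p_i}$. The sign $\epsilon_i$ cannot depend on $p_i$, because $\{\int\psi^{j,i}_x(1)\,d\mu:\mu\in\mathcal{M}(f_i)\}$ is compact, convex, avoids $0$, and contains the periodic data densely \cite{Sigmund}. Exact integrality plus convexity, not approximation, is what controls the twist.

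Even so, the normalization you aim for, a single constant making $G_p=I$, is not available in general. The outcome is $\mathcal{P}^{\psi^2}_p=\mathcal{P}^{\psi^1}_p\,\mathrm{diag}(\epsilon_1,\dots,\epsilon_k)$. A sign acting on the $\alpha_0$ side becomes a constant automorphism of $\alpha_2$ only when all $\epsilon_i$ agree, in which case it is $\pm I$. For example, take $k=2$, $u\in C^\infty(X_2)$ not cohomologous to a constant, and $U(t,x_2)=\int_0^t u(f_2^\tau x_2)\,d\tau$. The invertible cocycles $\psi^1_x(a)=(a_1+U(a_2,x_2),\,a_2)$ and $\psi^2_x(a)=(a_1-U(a_2,x_2),\,-a_2)$ have $\mathcal{P}^{\psi^1}_p=\begin{pmatrix}1&\bar u\\0&1\end{pmatrix}$ and $\mathcal{P}^{\psi^2}_p=\begin{pmatrix}1&-\bar u\\0&-1\end{pmatrix}$, where $\bar u=\bar u(p_2)$ is the orbit average. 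They give identical stabilizers, yet $G_p=\begin{pmatrix}1&-2\bar u(p_2)\\0&-1\end{pmatrix}$ varies with $p$. So no global constant aligns the markings, and your Step 3 cannot start.

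The paper's normalization ``after a constant change of coordinates $\mathcal{P}^{\varphi}_{\overline{p}}=\mathcal{P}^{\psi}_{\overline{p}}$'' has the same limitation. A target-side constant destroys the equality of stabilizers needed to reapply \cref{plusminusLemma}. In fact, suppose neither $f_i$ admits a homeomorphism $g$ with $g f_i^t=f_i^{-t}g$, $f_1$ is mixing, and $f_2$ has simple length spectrum. Then one checks that any conjugacy up to automorphism must preserve the $X_1$-fibres, and this forces $u$ to be cohomologous to a constant. So these two actions are not conjugate up to automorphism at all, and the mixed-sign case needs an extra hypothesis, not just a better argument.
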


The regularity $C^{r_*}$ arises from the higher rank Livshitz \cref{Livshitz} which uses Journé's Lemma. Here $r_*=r-1+\text{Lip}$, means that the conjugacy is a $C^{r-1}$-diffeomorphism whose $(r-1)$-jet is Lipschitz.

One class of time changes of $\alpha_0$ that we may consider consists of those obtained by reparametrizing each flow $f_i$.
For actions $\alpha$ in this class and $p \in \text{Per}(\alpha_0)$, $\text{Stab}_{\alpha}(p)$ is generated by integer multiples of the standard basis elements $\{e_1,...,e_k\} \subset \mathbb{R}^k$.
For constant time changes of such actions, we instead have some basis $\{u_1,...,u_k\} \subset \mathbb{R}^k$ such that the stabilizers of periodic orbits are generated by integer multiples of the basis elements.
The next Theorem shows that this simultaneous diagonalizability of stabilizers property classifies this class of time changes up to automorphism.

\begin{mainthm}
\label{thm2}
    Let $\alpha$ be a $C^r$ time change action of $\alpha_0$, for $r \in (0,s]$. Suppose there exist linearly independent vectors $u_1,...,u_k\in\R^k$ such that for any $p\in\text{Per}(\alpha_0)$, there exist scalars $a_i(p) \in \mathbb{R}\setminus \{0\}$ with $$\text{Stab}_\alpha(p)= a_1(p)u_1\mathbb{Z} \oplus ... \oplus a_k(p)u_k\mathbb{Z}.$$ Then $\alpha$ is $C^{r_*}$ conjugate up to automorphism to $g_1\times...\times g_k$, where each $g_i$ is a $C^{r_*}$ time change of $f_i$.
\end{mainthm}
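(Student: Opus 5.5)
The plan is to reduce \cref{thm2} to \cref{thm1} by building an explicit product action with the same marked stabilizer spectrum as $\alpha$, up to a linear change of coordinates.

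\textbf{Step 1: normalize the basis.} Replace $\alpha$ by the constant time change $\alpha'(a)=\alpha(Ua)$, where $U$ is the matrix with columns $u_1,\dots,u_k$. Then $\mathrm{Stab}_{\alpha'}(p)=U^{-1}\mathrm{Stab}_\alpha(p)=\bigoplus_i a_i(p)e_i\mathbb{Z}$. Hence we may assume $u_i=e_i$. This changes the conclusion only by an automorphism.

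\textbf{Step 2: identify the $a_i(p)$ with periods.} For a periodic point $p=(p_1,\dots,p_k)$ of $\alpha_0$, each $p_i$ is periodic for $f_i$ with some period $T_i(p_i)$, and $\mathrm{Stab}_{\alpha_0}(p)=\bigoplus_i T_i(p_i)e_i\mathbb{Z}$. The orbits of $\alpha$ and $\alpha_0$ through $p$ coincide, giving the same torus. Write $\Phi=\varphi_p$. Then $\Phi(\mathrm{Stab}_\alpha(p))=\mathrm{Stab}_{\alpha_0}(p)$, and $\Phi$ descends to a homeomorphism of tori $\R^k/\mathrm{Stab}_\alpha(p)\to\R^k/\mathrm{Stab}_{\alpha_0}(p)$.

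The key claim is that the generator $a_i(p)e_i$ corresponds, in homology, to the loop along the $i$-th circle factor $\{p_1\}\times\dots\times\mathcal{O}_{f_i}(p_i)\times\dots\times\{p_k\}$. In other words, the induced isomorphism of lattices sends $a_i(p)e_i\mapsto\pm T_i(p_i)e_i$ for some indexing.

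This is where I expect the main obstacle. A priori the induced map $\mathbb{Z}^k\to\mathbb{Z}^k$ is only some element of $GL(k,\mathbb{Z})$, and need not be a signed permutation. To pin it down I would use continuity in $p$ together with the fact that $\varphi$ is defined globally as a map on $\R^k\times M$. Fix the base periodic point $p$ and vary only the $j$-th coordinate $p_j$ over periodic points of $f_j$, which are dense by transitivity. The homotopy class of the induced map is locally constant, by continuity of $\varphi$ and of the lattices along families of nearby periodic tori obtained via shadowing or the Anosov closing lemma. So the matrix $\Phi_*$ is independent of $p$ on a dense set.

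Now suppose $\Phi_*$ had a nonzero off-diagonal entry, so that $a_i(p)e_i$ maps to a combination involving $T_j(p_j)$ with $j\neq i$. Then $a_i(p)$ would be expressed through $T_j(p_j)$ with $j\neq i$. Independently varying $p_j$ makes $T_j$ unbounded while $p_i$ stays fixed, and this contradicts continuity and boundedness of $\varphi$ on compact sets. Combined with unimodularity, this forces $\Phi_*$ to be a signed permutation. After permuting and reflecting coordinates (another automorphism), we get $\Phi_*=I$.

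\textbf{Step 3: build the product and apply \cref{thm1}.} For each $i$, define a cocycle $\tau_i(t,x)$ over $\alpha_0$ by taking the $i$-th component of $\varphi_x^{-1}(te_i)$ restricted to the $i$-th factor flow. Its values over the $i$-th circle of a periodic torus recover $a_i(p)$, and this quantity depends only on $p_i$.

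By rank-one Livshitz (\cref{Livshitz} for $k=1$) applied on $X_i$, choose a $C^{r}$ positive function $\rho_i$ on $X_i$ whose $f_i$-periods equal $a_i(p)$. The existence of such a function is the delicate point. I would obtain it by averaging $\alpha$'s infinitesimal generator $X_i$-component over the other factors, using Step 2 to guarantee that the periods are consistent. Let $g_i$ be the time change of $f_i$ with periods $a_i$.

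Then $l_{g_1\times\dots\times g_k}=l_{\alpha'}$. \cref{thm1} now gives a $C^{r_*}$ conjugacy up to automorphism between $\alpha$ and $g_1\times\dots\times g_k$, and regularity of $g_i$ improves to $C^{r_*}$ as in \cref{thm1}.
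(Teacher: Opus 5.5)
Reducing to \cref{thm1} by building a product with the same marked stabilizer spectrum is a reasonable alternative to the paper's ending. (The paper instead applies \cref{Livshitz} directly to $\varphi$ and $\gamma=\sum_i\gamma^ie_i$, then \cref{coboundary}.) However, the two steps you flag as the ``main obstacle'' and the ``delicate point'' are the whole content of the theorem, and neither is established.

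\textbf{Step 2.} There is no sense in which $\Phi_*$ is locally constant in $p$. Periodic tori do not come in continuous families: those produced by closing or shadowing have unrelated, typically much longer, periods. Moreover, $\Phi_*$ is written in bases $\{a_i(p)e_i\}$ and $\{T_i(p_i)e_i\}$ that jump with $p$. So a priori the lattice isomorphism could be a shear at one torus and diagonal at another.

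The missing input is the product structure of the periodic data. Take $\varphi$ to be the cocycle over $\alpha_0$, so $\alpha_0(a)x=\alpha(\varphi_x(a))x$. Then the column $\mathcal{P}^\varphi_p(e_i)$ depends only on $p_i$. This is \cref{doesnotdepend} (expand $\varphi_x(t_ie_i+se_j)$ in two ways and use transitivity of $f_j$), packaged as \cref{decompprop}.

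With it, your ``make $T_j$ huge'' idea becomes \cref{RuLemma}:
\begin{enumerate}
\item Let $S$ uniformly bound $\|\mathcal{P}^\varphi_p\|$ and $\|(\mathcal{P}^\varphi_p)^{-1}\|$, using \eqref{bddnorm} for $\varphi$ and for its inverse cocycle over $\alpha$.
\item Fix $p_i$ and take $p_j$ ($j\neq i$) with periods exceeding $2S^2T_i(p_i)$.
\item By hypothesis, each $u_j$-component of $\mathcal{P}^\varphi_p(T_i(p_i)e_i)$ lies in $\mathrm{Stab}_\alpha(p)$. Its pullback is a short vector of $\mathrm{Stab}_{\alpha_0}(p)$, hence lies on $e_i\R$.
\item This forces $\mathcal{P}^\varphi_p(e_i\R)=u_j\R$ for a single $j$.
\end{enumerate}
Independence from the other coordinates then transfers this to every periodic $p$. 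Comparing with a fixed base torus via linear independence of the columns (first half of \cref{aligned}) makes the matching $i\mapsto j$ the same for all $p$. Your later assertion that $a_i(p)$ ``depends only on $p_i$'' is the same unproved fact.

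\textbf{Step 3.} Reflecting coordinates fixes signs at one torus only; the sign of $a_i(p)$ itself is meaningless, since $a\mathbb{Z}=-a\mathbb{Z}$. What is needed is that the real cocycle $\psi^i=\pi_i\circ\varphi^i$ over $f_i$ satisfies $\int\psi^i_x(1)\,d\mu>0$ for every $f_i$-invariant $\mu$. The paper proves this in \cref{aligned} using density of periodic measures \cite{Sigmund}, convexity of $\{\int\varphi^i_x(1)\,d\mu\}$, and the fact that this compact set misses $0$.

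Even then, \cref{Livshitz} cannot ``choose a positive function with periods $a_i(p)$''. Livshitz only identifies cocycles with equal periodic data; it does not realize prescribed periods. Also, a cocycle with positive periods need not be pointwise increasing in $t$. The paper takes $\psi^i$ itself, built from $\varphi$ restricted to a fiber, so its periods are automatically right. It then applies the averaging \cref{softmaneLemma} to get $\gamma^i\approx^r\psi^i$ with $\gamma^i_x(t)>ct$ for $t>0$. This is an invertible cocycle, defining $g_i$ via \cref{invertiblecocycle}. Averaging an ``infinitesimal generator'' is in any case unavailable for $r<1$.

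With these two ingredients supplied, your closing appeal to \cref{thm1} does work, since $\mathcal{P}^\gamma_p=\mathcal{P}^\varphi_p$ gives identical stabilizers.
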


The product of Anosov flows provides the appropriate setting for our results, rather than higher-rank algebraic one, since the latter is cocycle rigid \cite{KatokSpatzier} and hence admits no nontrivial time changes.

One natural source of time changes is orbit equivalences.
We say that two actions $\alpha,\beta \colon \mathbb{R}^k \curvearrowright M$ are \emph{$C^r$ orbit equivalent} if there exists a $C^r$ homeomorphism $h \colon M \to M$ such that $h(\mathcal{O}_{\beta}(x)) = \mathcal{O}_{\alpha}(h(x))$, where $\mathcal{O}_{\beta}$ and $\mathcal{O}_{\alpha}$ are the orbit foliations of $\beta$ and $\alpha$ respectively.
In this case, define $\tilde{\beta}(a)x = h(\beta(a)h^{-1}x)$.
Then $\tilde{\beta}$ is a $C^r$ action with the same orbits as $\alpha$, which can be viewed as a time change of $\alpha$ which is $C^r$ conjugate to $\beta$.
In particular, for $\alpha = \alpha_0$ and after appropriately writing $\tilde{\beta}$ as a $C^{r'}$ time change of $\alpha_0$ for some $r'>0$, \cref{thm1} and \cref{thm2} are applicable to actions orbit equivalent to $\alpha_0$.

We say that $\alpha$ is \emph{structurally stable} if any action $\beta \colon \mathbb{R}^k \curvearrowright M$ $C^1$ close to $\alpha$ is orbit equivalent to $\alpha$.
As a consequence of Theorem 7.1 of \cite{HPS}, Anosov actions are H\"older structurally stable.
This means that any $C^1$ perturbation of $\alpha_0$ can be written, up to H\"older conjugacy, as a H\"older time change of $\alpha_0$ and \cref{thm1} and \cref{thm2} also hold for actions in a $C^1$ neighborhood of $\alpha_0$. 
The drawbacks are that the conjugacy will not necessarily be $C^1$ and the action product action constructed on \cref{thm2} is only H\"older.

\begin{maincor}
There exists a $C^1$ neighborhood $\mathcal{U}$ of $\alpha_0$ such that
\begin{enumerate}
    \item For $\alpha_1,\alpha_2 \in \mathcal{U}$, denote by $h_i \colon M \to M$ the H\"older orbit equivalence between $\alpha_0$ and $\alpha_i$ for $i=1,2$.
    If there exists $C \in GL(k,\mathbb{R})$ such that $\text{Stab}_{\alpha_1}(h_1(p)) = C \cdot \text{Stab}_{\alpha_2}(h_2(p))$, for all $p \in \text{Per}(\alpha_0)$,
    then $\alpha_1$ is H\"older conjugate to $\alpha_2$, up to automorphism;
    \item If for $\alpha \in \mathcal{U}$, there exist linearly independent vectors $u_1,...,u_k \in \mathbb{R}^k$ such that for any $p \in \text{Per}(\alpha)$, there exists scalars $a_i(p) \in \mathbb{R}$ satisfying $\text{Stab}_{\alpha}(p) = a_1(p)u_1\mathbb{Z} \oplus ... \oplus a_k(p)u_k\mathbb{Z}$, then $\alpha$ is H\"older conjugate up to automorphism to $g_1\times... \times g_k$ where each $g_i$ is a H\"older time change of $f_i$.
\end{enumerate}
\end{maincor}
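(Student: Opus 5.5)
The plan is to reduce the Corollary to \cref{thm1} and \cref{thm2} using the Hölder structural stability of Anosov actions (Theorem 7.1 of \cite{HPS}). First I will choose $\mathcal{U}$ so that every $\alpha\in\mathcal{U}$ admits a Hölder homeomorphism $h\colon M\to M$, close to the identity, carrying orbits of $\alpha_0$ to orbits of $\alpha$. I then pull $\alpha$ back by setting $\tilde\alpha(a)x=h^{-1}(\alpha(a)h(x))$. This is a continuous action with the same orbit foliation as $\alpha_0$, and it is Hölder conjugate to $\alpha$ via $h$.

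The next step is to write $\tilde\alpha$ as a Hölder time change of $\alpha_0$. Since $\tilde\alpha$ preserves each $\alpha_0$-orbit, for every $x$ there is a unique continuous lift $\varphi_x\colon\R^k\to\R^k$ with $\varphi_x(0)=0$ and $\tilde\alpha(a)x=\alpha_0(\varphi_x(a))x$. Uniqueness uses local freeness together with lifting through the covering $\R^k\to\R^k/\mathrm{Stab}$ of the orbit.

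I must check three properties of $\varphi$:
\begin{itemize}
\item $\varphi$ is Hölder in $(a,x)$;
\item each $\varphi_x$ is a homeomorphism of $\R^k$;
\item the regularity needed in \cref{thm1} and \cref{thm2} holds, namely Hölder in $x$ and $C^r$ in $a$ along orbits, with some $r\in(0,1)$.
\end{itemize}
This is the step I expect to be the main obstacle. The orbit equivalence from \cite{HPS} is only Hölder, and it need not be differentiable along orbits. My first attempt is to use the fact that the \cite{HPS} construction can be taken smooth along orbits, in the sense that its restriction to each leaf is $C^1$ with derivative Hölder in $x$. Then the generator of $\tilde\alpha$ is a Hölder family of vectors tangent to $\alpha_0$-orbits, i.e. a Hölder matrix-valued function. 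This makes $\varphi$ Hölder, with each $\varphi_x$ a local homeomorphism. Properness then follows from uniform bounds on the generator and its inverse for $\alpha$ sufficiently $C^1$-close to $\alpha_0$, and shrinking $\mathcal{U}$ guarantees both the closeness and the invertibility.

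With this in place, part (1) follows. Periodic orbits correspond: $p\in\mathrm{Per}(\alpha_0)$ if and only if $h_i(p)\in\mathrm{Per}(\alpha_i)$. Moreover $\mathrm{Stab}_{\tilde\alpha_i}(p)=\mathrm{Stab}_{\alpha_i}(h_i(p))$, because conjugacy preserves stabilizers. The hypothesis therefore says exactly that $l_{\tilde\alpha_1}=C\cdot l_{\tilde\alpha_2}$. \cref{thm1} with Hölder exponent $r<1$, so that $r_*=r$, then gives a Hölder conjugacy up to automorphism between $\tilde\alpha_1$ and $\tilde\alpha_2$. Composing with $h_1$ and $h_2^{-1}$ yields the Hölder conjugacy up to automorphism between $\alpha_1$ and $\alpha_2$.

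For part (2), the same identification transfers the diagonal form of $\mathrm{Stab}_\alpha(h(p))$ to $\mathrm{Stab}_{\tilde\alpha}(p)$ for all $p\in\mathrm{Per}(\alpha_0)$. The scalars $a_i(p)$ are automatically nonzero, since the stabilizer is a lattice. \cref{thm2} then gives a Hölder conjugacy up to automorphism from $\tilde\alpha$ to $g_1\times\dots\times g_k$, with each $g_i$ a Hölder time change of $f_i$, and composing with $h$ finishes the proof.
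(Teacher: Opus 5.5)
Your proposal is correct and follows the paper's route. The paper's justification is just the remark that Hölder structural stability from \cite{HPS} lets one write any $C^1$ perturbation of $\alpha_0$, up to Hölder conjugacy, as a Hölder time change of $\alpha_0$, and then applies \cref{thm1} and \cref{thm2}; you do the same, transferring stabilizers via $\mathrm{Stab}_{\tilde\alpha}(p)=\mathrm{Stab}_{\alpha}(h(p))$, using $r<1$ so that $r_*=r$, and composing with the $h_i$. You also flesh out the time-change step, which the paper only asserts, though your appeal to smoothness of $h$ along orbits is not needed there: the lift $\varphi_x$ is automatically Hölder (via local times in foliation charts of $\alpha_0$), and it is a homeomorphism because it intertwines two universal coverings $\R^k\to\mathcal{O}(x)$ of the same leaf.
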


In certain cases in the rank $1$ setting, H\"older conjugacies between smooth Anosov flows can be upgraded to a smooth conjugacies. For example, such an upgrade is possible for transitive Anosov flows on a $3$-manifold with the same periodic orbit Lyapunov data, for contact Anosov flows under bunching assumptions, or for volume preserving Anosov flows on a $3$-manifold, see \cite{LM,GRH,GRH2, GRH3,GLRH}.
It would be interesting to see in which cases the H\"older conjugacies obtained by \cref{thm1} and \cref{thm2} can be upgraded to smooth conjugacies.

\subsection{Applications to Cartan actions}
Given an Anosov action $\alpha \colon \mathbb{R}^k \curvearrowright M$, consider the decomposition $TM = T\mathcal{O}_{\alpha}\oplus \bigoplus_{i \in I}E_i$ into coarse Lyapunov subbundles. A more precise definition and further details can be found in \cref{sec:classifyAnosov}. We say that $\alpha$ is \emph{Cartan} if each $E_i$ is one dimensional for all $i\in I$.

Suppose that each $X_i$ is $3$-dimensional. This implies that $\alpha_0 = f_1 \times ... \times f_k$ is a Cartan action on $M = X_1 \times ... \times X_k$.
Write $TM = T \mathcal{O}_{\alpha} \oplus \bigoplus_{i \in I}E^{\alpha_0}_i$.
If $\alpha \colon \mathbb{R}^k \curvearrowright M$ is a $C^1$ time change of $\alpha_0$ and $p \in \text{Per}(\alpha)$, there exists a splitting $T_pM = T_p\mathcal{O} \oplus \bigoplus_{i \in I} E^{\alpha}_i$ which is $D_p\alpha(a)$ invariant for every $a \in \text{Stab}_{\alpha}(p)$ (see \cref{changeofcoord}).
We may then consider the Lyapunov functionals $\lambda^{i,\alpha}_p (a) = \int \log \|D_x\alpha(a)|_{E^\alpha_i}\| d\mu_p(x)$ where $\mu_p$ is the ergodic measure supported on the orbit of $p$. 

The next Theorem relates the property of $\alpha$ being a product with the totally Anosov property and the Lyapunov functionals along periodic orbits.

\begin{mainthm}
    \label{thm4}
    Let $f_i \colon X_i \to X_i$ be transitive Anosov flows on a compact, connected Riemannian manifolds of dimension 3 for $i=1,...,k$, $\alpha_0 = f_1 \times ... \times f_k$ and  
    $\alpha$ be a $C^1$ time change of $\alpha_0$. The following are equivalent:
    \begin{enumerate}
        \item $\alpha$ is $C^1$ conjugate up to automorphism to a product $g_1 \times ... \times g_k$ where each $g_i$ is a $C^1$ time change of $f_i$;
        \item $\alpha$ is totally Anosov;
        \item For every $i \in I$ and $p,q \in \text{Per}(\alpha)$, $\ker \lambda_{p}^{i,\alpha} = \ker \lambda_{q}^{i,\alpha}$.
    \end{enumerate}
\end{mainthm}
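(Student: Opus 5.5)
We will prove (1)$\Rightarrow$(2)$\Rightarrow$(3)$\Rightarrow$(1).

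\emph{(1)$\Rightarrow$(2).} Let $h$ and $C$ be as in (1). For a product $g_1\times\dots\times g_k$ of time changes of the $f_i$, each $g_i$ is again a transitive Anosov flow, since a $C^1$ time change of an Anosov flow is Anosov. Hence $(t_1,\dots,t_k)$ is Anosov for the product exactly when every $t_i\neq 0$. This set is open and dense. A $C^1$ conjugacy carries Anosov elements to Anosov elements, and $C$ is linear, so the Anosov elements of $\alpha$ are $C^{-1}$ of that open dense set. Therefore $\alpha$ is totally Anosov.

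\emph{(2)$\Rightarrow$(3).} Because $\alpha$ is a time change of $\alpha_0$, the two actions have the same orbit foliation. Using the change of coordinates of \cref{changeofcoord}, each coarse Lyapunov space $E_i^\alpha$ at a periodic point $p$ projects to the corresponding one-dimensional $E^{\alpha_0}_i$, which is the stable or unstable bundle of a single factor $f_j$. Each functional $\lambda_p^{i,\alpha}$ is therefore a nonzero linear functional on $\R^k$, and its kernel is a hyperplane.

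Now $a$ is Anosov for $\alpha$ if and only if no Lyapunov functional, over all invariant measures, vanishes at $a$. For periodic measures this gives that $a\notin\ker\lambda^{i,\alpha}_p$ for all $i$ and $p$. By total Anosovness the complement of the Anosov set has empty interior. The key input is that this complement is closed and is the union of the hyperplanes $\ker\lambda^{i,\alpha}_\mu$ over ergodic $\mu$. Each such kernel varies continuously in $\mu$, and the space of invariant measures is connected.

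Suppose two periodic orbits gave different kernels for the same $i$. Periodic measures are dense, and we can interpolate between them. The kernels would then sweep out a family of hyperplanes filling an open cone of $\R^k$ consisting of non-Anosov elements, which contradicts total Anosovness. This continuity and sweeping argument is the step I expect to be the main obstacle. It has two parts. First, one must show that the set of kernels $\{\ker\lambda_\mu^{i,\alpha}\}$ is connected. Here I would use the specification property of the product, or its transitivity, to build periodic orbits whose measures interpolate between $\mu_p$ and $\mu_q$. Second, one must show that a non-degenerate continuum of hyperplanes in $\R^k$ covers an open set.

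\emph{(3)$\Rightarrow$(1).} Fix for each $i$ the common kernel $H_i=\ker\lambda^{i,\alpha}_p$, which is independent of $p$. The indices $i$ come in pairs, corresponding to the stable and unstable directions of each factor $f_j$. For each pair the kernel is the same hyperplane $K_j$: the functionals for $E^s$ and $E^u$ of factor $j$ both vanish on the elements acting trivially on factor $j$'s time coordinate.

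The stabilizer $\mathrm{Stab}_\alpha(p)$ is $\varphi_p^{-1}$ of the lattice $T_1(p)\mathbb{Z}\oplus\dots\oplus T_k(p)\mathbb{Z}$, where the $T_j(p)$ are the periods. The element of this stabilizer corresponding to the $j$-th period acts trivially, by the identity, on all factors $l\neq j$. Its derivative on those coarse Lyapunov spaces is therefore trivial up to the orbit-direction correction, so it lies in $\bigcap_{l\neq j}K_l$. Set $u_j$ to be a generator of the line $\bigcap_{l\neq j}K_l$; this line is independent of $p$ by (3). Then
\[
\mathrm{Stab}_\alpha(p)=a_1(p)u_1\mathbb{Z}\oplus\dots\oplus a_k(p)u_k\mathbb{Z},
\]
and \cref{thm2} with $r=1$ gives a conjugacy up to automorphism to a product of time changes. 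Since $\alpha$ is only $C^1$, \cref{thm2} yields regularity $r_*=1$. This is exactly $C^1$ because the paper's convention is $r_*=r$ for $r=1$.
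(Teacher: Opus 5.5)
\textbf{Verdict: there is a genuine gap, in (2)$\Rightarrow$(3).} Your arguments for (1)$\Rightarrow$(2) and (3)$\Rightarrow$(1) are correct and essentially the paper's. In (3)$\Rightarrow$(1) you use \cref{changeofcoordLyap} to see that $\bigcap_{l\neq j}\ker\lambda^{\pm l,\alpha}_p=\mathcal{P}^{\varphi}_p(\langle e_j\rangle)$ is a line independent of $p$, and then invoke \cref{thm2} with $r=1$, as the paper does.

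The gap sits exactly at the step you flag, and the mechanism you propose there fails. Periodic orbits of $\alpha$ are the product tori $\mathcal{O}_{f_1}(p_1)\times\cdots\times\mathcal{O}_{f_k}(p_k)$. Their invariant measures are, up to a density bounded above and below coming from the time change, the products $\mu_{p_1}\times\cdots\times\mu_{p_k}$. Weak$^*$ limits of periodic measures are therefore again comparable, with bounded densities, to product measures. But $t\mu_p+(1-t)\mu_q$ is not of this form once $p$ and $q$ differ in two coordinates. So periodic measures of the $\R^k$-action are not dense in $\mathcal{M}(\alpha)$, and no specification or transitivity argument will produce periodic orbits interpolating between $\mu_p$ and $\mu_q$.

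Your description of the non-Anosov set as a union of kernels over \emph{ergodic} measures also discards the convexity that would make connectedness automatic. Using \cref{lyapexpthm} with non-ergodic measures would give the sweep for free, since $\lambda^i_{t\mu_p+(1-t)\mu_q}=t\lambda^i_p+(1-t)\lambda^i_q$. But that route first requires showing that $\alpha$ is Cartan, with continuous coarse Lyapunov bundles agreeing at periodic points with the splitting of \cref{changeofcoord}, and you do not address this.

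The missing idea is to vary one factor at a time, which reduces everything to Sigmund's density theorem for a single flow.
\begin{itemize}
\item By \cref{decompprop} and \cref{changeofcoordLyap}, $\ker\lambda^{i,\alpha}_p=\operatorname{span}\{\mathcal{P}^{\varphi^l}_{p_l}(1): l\neq |i|\}$, where each $\varphi^l$ is a cocycle over $f_l$.
\item Fix $j\neq|i|$ and the coordinates $\bar p_l$ for $l\notin\{j,|i|\}$. The kernel becomes $\langle\mathcal{P}^{\varphi^j}_{p_j}(1)\rangle\oplus V$, with $V$ a fixed subspace of dimension $k-2$.
\item By \cite{Sigmund}, the vectors $\mathcal{P}^{\varphi^j}_{p_j}(1)$, $p_j\in\mathrm{Per}(f_j)$, are dense in the compact convex set $\{\int\varphi^j_x(1)\,d\mu(x):\mu\in\mathcal{M}(f_j)\}$.
\item Suppose two choices $p_j,q_j$ gave different kernels. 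Then the hyperplanes $\langle u\rangle\oplus V$, with $u$ on the segment between $\mathcal{P}^{\varphi^j}_{p_j}(1)$ and $\mathcal{P}^{\varphi^j}_{q_j}(1)$, fill an open cone. Every point of this cone is a limit of elements of periodic kernels.
\item Elements of periodic kernels are non-Anosov, since they are not hyperbolic along the periodic torus. The non-Anosov set is closed, so it contains this open cone, contradicting total Anosovness.
\item Any two periodic points are joined by changing one coordinate at a time, so the kernel is independent of $p$.
\end{itemize}
This argument uses only periodic data, so it needs neither the Cartan property of $\alpha$ nor \cref{lyapexpthm}.
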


Using the formula of how Lyapunov functionals change under $C^1$ time changes  in \cref{changeofcoordLyap} together with \cref{thm4}, we are able to extend the main result of \cite{Vinhage} for products of any transitive Anosov flows, not only those satisfying the continuously accessible assumption (see Definition 3.4 of \cite{Vinhage}).
We refer to \cite{Vinhage} for the definitions of a rank one factor and a homogeneous action.

\begin{mainthm}
\label{counterexamples}
    Let $X$ and $Y$ be $3$-dimensional, compact, connected, Riemannian manifolds admitting transitive Anosov flows.
    Then, there exists a $C^{\infty}$ cone transitive action $\alpha \colon \mathbb{R}^2 \curvearrowright X \times Y$ which is Anosov, has no $C^1$ rank one factors and is not homogeneous.
\end{mainthm}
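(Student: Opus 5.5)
We will construct $\alpha$ as a $C^\infty$ time change of $\alpha_0 = f\times g$, where $f$ on $X$ and $g$ on $Y$ are smooth transitive Anosov flows.
Since $\dim X=\dim Y=3$, $\alpha_0$ is Cartan with four coarse Lyapunov bundles $E^{s}_f, E^u_f, E^s_g, E^u_g$.
The plan is to choose the time change so that, by \cref{thm4}, $\alpha$ is not conjugate up to automorphism to any product, while staying close enough to $\alpha_0$ to remain Anosov.

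\emph{Choice of time change.}
Write the generators of $\alpha$ as vector fields $Z_1=X_f+\psi(x,y)X_g$ and $Z_2=X_g$.
Here $X_f,X_g$ generate $f,g$, and $\psi$ is a smooth function, $C^1$-small; commutativity must hold (or we adjust by a cocycle as in Vinhage's construction).
Commutativity of $Z_1,Z_2$ forces $X_g\psi=0$, so $\psi$ is a function of $x$ alone; then $\alpha(t,s)(x,y)=(f^tx, g^{s+\int_0^t\psi(f^rx)dr}y)$.
This is an $\mathbb{R}^2$ action and a smooth time change of $\alpha_0$.
It is Anosov, since the element $(1,c)$ for suitable $c$ remains hyperbolic when $\psi$ is small; more simply, Anosovness is an open condition and we take $\psi$ small.

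\emph{Failure of product structure.}
At a periodic point $(p,q)$ with $f$-period $T_p$ and $g$-period $T_q$, the stabilizer is generated by $(T_p,-\int_{\mathcal{O}(p)}\psi)$ and $(0,T_q)$.
Using \cref{changeofcoordLyap}, $\lambda^{E^u_g}_{(p,q)}(t,s)=(s+t\,\bar\psi(p))\chi_q$, where $\bar\psi(p)$ is the average of $\psi$ on the orbit of $p$ and $\chi_q$ is the $g$-exponent.
Its kernel is spanned by $(1,-\bar\psi(p))$.
By Livshitz, a nonconstant $\psi$ that is not cohomologous to a constant gives two periodic orbits with $\bar\psi(p)\neq\bar\psi(p')$, so condition (3) of \cref{thm4} fails.
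Hence $\alpha$ is not $C^1$ conjugate, even up to automorphism, to a product of time changes, and equivalently it is not totally Anosov.
Any $C^1$ rank one factor would force $\alpha$ to split as a product; we would verify this using the Cartan structure, since a rank one factor must collapse a pair of coarse Lyapunov directions together with a one-dimensional orbit direction.

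\emph{Non-homogeneity.}
A homogeneous Anosov action on $X\times Y$ would be cocycle rigid and hence totally Anosov, or would have a rank one factor; both are excluded by the previous step.

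\emph{Cone transitivity.}
This should follow from transitivity of $f,g$ and density of periodic orbits, since suitable Weyl chamber elements such as $(1,c)$ remain transitive.

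The main obstacle is the claim that there are no $C^1$ rank one factors; this is where \cite{Vinhage} used continuous accessibility.
We would instead argue through \cref{thm4}: a rank one factor yields a product structure, contradicting (3).
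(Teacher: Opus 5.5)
There is a genuine gap, and it is in the construction itself. Your action $\alpha(t,s)(x,y)=(f^tx,\,g^{s+\int_0^t\psi(f^rx)\,dr}y)$ satisfies $\pi\circ\alpha(t,s)=f^{t}\circ\pi$ for the projection $\pi(x,y)=x$. So $f$ is a $C^\infty$ rank one factor of $\alpha$, with homomorphism $\sigma(t,s)=t$, and $\alpha$ cannot satisfy the theorem. Your ansatz forces this: fixing $Z_2=X_g$ and imposing $[Z_1,Z_2]=0$ makes $\psi$ depend on $x$ alone, i.e.\ a one-sided shear that leaves the $X$-coordinate untouched. The principle you planned to use, that a rank one factor yields a product structure contradicting (3) of \cref{thm4}, is false, and your own example refutes it. The kernels of the $E^{s,u}_g$ functionals vary with $p$, so (3) fails and $\alpha$ is not a product up to automorphism, yet $f$ is a factor. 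Failure of (3) in some coarse direction does not obstruct factors. Here the factor survives precisely because the kernels of the $E^{s,u}_f$ functionals all equal $\{t=0\}=\ker\sigma$.

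The paper handles both points differently. It uses the two-sided cocycle $\varphi^\delta$ over $\alpha_0$, which shears both coordinates via $u_\delta$ on $X$ and $v_\delta$ on $Y$, and gets invertibility from Lemma 2.4 of \cite{Vinhage} rather than from commuting vector fields. Its periodic data is $A_\delta$ at $(p_1,q_1)$ and the identity at $(p_2,q_2)$, so by \cref{changeofcoordLyap} the kernel of \emph{every} coarse Lyapunov functional differs between these two orbits. Now take a $C^1$ factor $(\pi,\sigma,h^t)$. If some $E^{\alpha_\delta}_i$ were transverse to $\ker D\pi$ at a point, Lemma 4.3 of \cite{Vinhage} would give a continuous metric on $E^{\alpha_\delta}_i$ for which $\ker\sigma$ acts isometrically. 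That forces $\ker\lambda^{i,\alpha_\delta}_p=\ker\sigma$ for all periodic $p$, a contradiction. Hence every $E^{\alpha_\delta}_i\subset\ker D\pi$, and Lemma 4.1 of \cite{Vinhage} makes $h^t$ a circle rotation. This case needs its own argument, which your plan lacks entirely. Then $\ker\sigma$ would meet every stabilizer lattice nontrivially, so $A_\delta\,\text{Stab}_{\alpha_0}(p_1,q_1)$ and $\text{Stab}_{\alpha_0}(p_2,q_2)$ would share a line, and \cref{stabrot} excludes this for all but countably many $\delta$. Your non-homogeneity step (a homogeneous Cartan action is totally Anosov) matches the paper's and is fine once all kernels are made to vary.
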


\subsection{Structure of paper}

In \cref{sec:cocycle} we start by proving some basic, although important, abelian cocycle facts.
One such fact that we wish to highlight is \cref{invertiblecocycle} which states that time changes of $\mathbb{R}^k$ actions are in one to one correspondence with the notion of invertible $\mathbb{R}^k$ valued cocycles.
This means that we can view \cref{thm1} and \cref{thm2} as Theorems about invertible cocycles over $\alpha_0$.

In \cref{sec:decomp} we prove \cref{decompprop} which states that, mod coboundaries, cocycles over $\alpha_0$ can be decomposed into sum of cocycles over the Anosov flows $f_i$.

Next, \cref{thm1} is proved in \cref{sec:thm1} and \cref{thm2} in \cref{sec:thm2}. Their proofs follow similar arguments using the decomposition \cref{decompprop} and higher rank Livshitz \cref{Livshitz}.
While in \cref{thm1} the cocycles defining the time changes are given, \cref{sec:thm2} has the added difficulty of needing to construct invertible cocycles with the right periodic data.
This uses \cref{softmaneLemma} which constructs good representatives in the cohomology class of a given cocycle.

Finally, \cref{sec:classifyAnosov} is dedicated to proving \cref{thm4} and \cref{counterexamples}.
They rely on \cref{lyapexpthm}, which classifies the set of Anosov elements for Cartan actions in terms of the Lyapunov functionals, and a \cref{changeofcoordLyap} which says how Lyapunov functionals change under $C^1$ time changes.

\section{Cocycles}\label{sec:cocycle}
In this section, we establish several properties of cocycles that will play key roles in the proofs of the main theorems. Let $\varphi\colon \mathbb{R}^k \times M \to \mathbb{R}^l$ be a $cocycle$ over the action $\alpha\colon\R^k\curvearrowright M$ defined by $(a,x)\mapsto\varphi_x(a)$ satisfying the cocycle relation
\begin{equation}\label{cocycle}
    \varphi_x(a+b)=\varphi_{\alpha(a)x}(b)+\varphi_x(a).
\end{equation}
We say that the cocycle $\varphi$ is a \emph{coboundary} if there exists $H \colon M \to \mathbb{R}^l$ satisfying $\varphi_x(a) = H(\alpha(a)x)-H(x)$.
In this case, we say $H$ is a \emph{transfer function}.
Given $p \in M$ periodic and $a,b \in \text{Stab}_{\alpha}(p)$, \eqref{cocycle} implies
\begin{equation*}
    \varphi_p(a+b) = \varphi_p(a) +\varphi_p(b).
\end{equation*}
Since $\text{Stab}_{\alpha}(p) < \mathbb{R}^k$ is a lattice when the action $\alpha$ is locally free, we may extend $\varphi_p \colon \text{Stab}_{\alpha}(p) \to \mathbb{R}^l$ to a linear map $\mathcal{P}^{\varphi}_p \colon \mathbb{R}^k \to \mathbb{R}^l$.
Furthermore, for $c \in \mathbb{R}^k$ and $a \in \text{Stab}_{\alpha}(p)$, \eqref{cocycle} implies
\begin{equation*}
    \varphi_{\alpha(c)p}(a) = \varphi_p(a+c)-\varphi_p(c)
    =\varphi_p(a).
\end{equation*}
This means that $\mathcal{P}_p^{\varphi}$ only depends on the orbit of $p$. The map $\mathcal{P}^{\varphi}_p$ can equivalently be defined as
\begin{equation}
    \label{Periodmap}
    \mathcal{P}^{\varphi}_p(a) = \int \varphi_x(a)d\mu_p(x),
\end{equation}
for $a \in \mathbb{R}^k$ where $\mu_p$ is the periodic invariant measure on the orbit of $p$.
Indeed, the expression defined by \eqref{Periodmap} is linear in $a \in \mathbb{R}^k$ by the cocycle property of $\varphi$ and invariance of $\mu_p$ and it agrees with $\varphi_p$ on $\text{Stab}_{\alpha}(p)$.

When $M$ is compact and the cocycle is continuous, we first observe that the norm $\|\mathcal{P}_p^{\varphi}\|$ is uniformly bounded in $p$.
Indeed, by \eqref{Periodmap},
\begin{equation}
    \label{bddnorm}
    \underset{p \in \text{Per}(\alpha)}{\sup}\|\mathcal{P}_p^{\varphi}\|
    \leq \underset{p \in \text{Per}(\alpha)}{\sup} \int \underset{|a|\leq 1}{\sup}|\varphi_x(a)|d\mu_p(x)
    \leq \underset{x \in M, |a| \leq 1}{\sup}|\varphi_x(a)| < \infty.
\end{equation}

\subsection{Higher rank Livshitz Theorem}
In this subsection, we establish a Livshitz theorem for higher rank cone transitive Anosov actions. 
An Anosov action $\alpha$ is $transitive$ if there exists a point with a dense $\R^k$-orbit.  
We say that an Anosov action is \emph{cone transitive} if there exists an open cone $C\subset\R^k$ and a point $x\in M$ such that $\alpha(C)x$ is dense, and the only non-Anosov element of $\bar{C}$ is $0$. 
The action $\alpha_0$ defined in \cref{sec:introduction} is cone transitive.
One way to see this is by noting that it has a dense set of periodic orbits, which is equivalent to cone transitivity for totally Anosov actions by \cite[Lemma 4.17]{SpatzierVinhage}. 

\begin{Lemma}[\cite{SpatzierVinhage}, Lemma 4.17]\label{conetransitive}
    Let $\alpha\colon\R^k\curvearrowright M$ be a cone transitive, $C^k$, Anosov action. Then, the action has a dense set of periodic orbits. Moreover, there exists an Anosov element $a$ with a dense forward orbit $\{\alpha(na)x\colon n\in \mathbb{N}\}$.
\end{Lemma}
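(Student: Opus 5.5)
I would reduce both assertions to the structure of a single Anosov element together with the Anosov closing lemma for the action. Fix the open cone $C$ and the point $x$ with $\alpha(C)x$ dense, where $\bar C\setminus\{0\}$ consists of Anosov elements. Since the Anosov elements form an open set and $\bar C\setminus\{0\}$ is compact up to scaling, the hyperbolicity constants $\lambda, C_0$ can be chosen uniform over all unit vectors of $\bar C$. This uniformity is the basic tool used below.

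\textbf{Density of periodic orbits.} Fix an open set $U\subset M$. By density of $\alpha(C)x$, there are $c_1,c_2\in C$ with $\alpha(c_1)x\in U$ and $\alpha(c_2)x\in U$. Since $C$ is an open cone, $c_2-c_1$ can be taken deep in $C$ with arbitrarily large norm: the set $\{c\in C: \alpha(c)x\in U\}$ is unbounded in every open subcone, because $\alpha(C+c)x=\alpha(c)\alpha(C)x$ is still dense once $C+c\subset C$. Write $y=\alpha(c_1)x$ and $b=c_2-c_1\in C$, so that $\alpha(b)y$ returns close to $y$, within the orbit foliation up to a small error, with $b$ a long Anosov element. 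I would then apply the closing lemma for Anosov actions, which is the higher rank version of the Anosov closing lemma of Katok--Spatzier. It is proved by a shadowing argument for the single hyperbolic element $b/|b|$ applied to the pseudo-orbit, and one then corrects within the orbit direction. This produces a point $p$ near $y$ and a vector $b'$ near $b$ with $\alpha(b')p=p$. To pass from stabilizing one vector to a closed orbit, I would iterate the recurrence to obtain $k$ independent return vectors in $C$. Concretely, I would pick $k$ linearly independent subcones and use the fact that the stabilizer of $p$ is a discrete subgroup containing elements close to each of them; this forces the stabilizer to be a lattice, so the orbit of $p$ is compact. \emph{This step is the main obstacle}: a single closing gives only one stabilizing vector. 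My first attempt would be a simultaneous closing, in which I shadow a $\mathbb{Z}^k$-pseudo-orbit generated by $k$ approximate returns, using that all elements of $C$ share the same stable and unstable splitting.

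\textbf{Dense forward orbit of an Anosov element.} Pick an Anosov $a\in C$. Since $\alpha(C)x$ is dense, I would use a Baire category argument. For a countable basis $\{U_n\}$, consider the sets $W_n=\bigcup_{m\ge1}\alpha(-ma)U_n$, which are open. Their density would follow from the following claim: for every open $V$ and every $n$, some $\alpha(ma)V$ meets $U_n$. To prove the claim, note that $V$ contains a small piece of the orbit $\alpha(c+B)x$ with $c\in C$, where $B$ is a small ball. Pushing this forward by $ma$, the set $\alpha(C)x$ minus a compact part is still dense. I would then absorb the transverse parameter using the cone structure, since $C+ma$ covers large regions of $C$ modulo a bounded ball around the ray $\mathbb{R}_+a$. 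If this is too weak, I would instead use the local product structure of $a$ together with density of periodic orbits, via a spectral decomposition argument: for the time-$a$ map modulo the orbit directions, the nonwandering set is all of $M$, so $\alpha(a)$ is topologically transitive on a basic set equal to $M$. This gives the dense forward orbit, after possibly adjusting $a$ within $C$ to avoid a countable set of resonant elements for which the discrete-time map fails to be transitive.
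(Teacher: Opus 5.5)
Since the paper only cites Spatzier--Vinhage for this lemma, I compared your proposal with the standard argument. There is a genuine gap in each half.

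\textbf{Periodic orbits.} The step you call the main obstacle rests on a false premise, and your proposed remedy does not work as stated. Each closing produces its own periodic point, so nothing places the $k$ closed-up vectors in the stabilizer of a single $p$. Likewise, approximate returns $\alpha(b_i)y\approx y$ at one point do not make $\{\alpha(\sum_i n_ib_i)y\}$ a $\mathbb{Z}^k$ pseudo-orbit, since closeness at $y$ is not propagated along the orbit.

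What you are missing is that one Anosov element in the stabilizer already forces a compact orbit. Suppose $\alpha(b')p=p$ with $b'$ Anosov. By commutativity, $\alpha(b')$ fixes $\overline{\alpha(\mathbb{R}^k)p}$ pointwise. $\alpha(b')$ is expansive transverse to the orbit foliation: two points whose $\alpha(nb')$-orbits stay $\varepsilon$-close for all $n\in\mathbb{Z}$ lie on a common local orbit plaque. Hence near any fixed point $z$, the set $\mathrm{Fix}(\alpha(b'))$ lies in the plaque $\alpha(B_\delta)z$, with $B_\delta\subset\mathbb{R}^k$ a small ball. So every orbit in $\overline{\alpha(\mathbb{R}^k)p}$ is open in it, and by density it equals $\alpha(\mathbb{R}^k)p$. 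The orbit is therefore compact, and its stabilizer, discrete by local freeness, is a lattice. With this fact your single closing along a large $b\in C$, using uniform hyperbolicity constants on $\bar C$, already proves density of periodic orbits.

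\textbf{Transitive element.} The reduction to topological transitivity of $\alpha(a)$ followed by your Baire argument is fine, but neither of your arguments for transitivity works.
\begin{itemize}
\item Density of $\alpha(C)x$ only says that $\{d\in C:\alpha(d)x\in U\}$ meets every translate $c+C$. It need not meet every subcone, let alone a tube $c+\mathbb{N}a+B$ around a ray. For the same reason, your claim in the first half that this set is unbounded in every open subcone is unjustified, although only a large return in $C$ is needed there.
\item The fallback invokes a spectral decomposition for ``the time-$a$ map modulo orbit directions''. That is not a well-defined map, and no such theorem is available for $\alpha(a)$, whose center is the non-compact $k$-dimensional orbit foliation. $\Omega(\alpha(a))=M$ does not give transitivity; products of constant-roof suspensions are counterexamples. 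You also neither identify the resonant set (a countable union of affine hyperplanes, not a countable set) nor prove transitivity off it.
\end{itemize}

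The missing mechanism has three steps.
\begin{enumerate}
\item Heteroclinic connections $W^u_a(\alpha(\mathbb{R}^k)p)\cap W^s_a(\alpha(\mathbb{R}^k)q)\neq\emptyset$ for periodic $p,q$. Take $\alpha(c_1)x$ near $p$ and $\alpha(c_1+d)x$ near $q$ with $d\in C$ large. Bracket with the weak unstable manifold of $p$, push forward by $\alpha(d)$, and bracket again near $q$. The push-forward works because $\alpha(d)$ contracts $W^s_a$, as $d$ and $a$ lie in the same chamber.
\item Choose $a\in C$ so that translation by $a$ is minimal on $\mathbb{R}^k/\mathrm{Stab}(p)$ for every $p$ in a countable dense set of periodic points. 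This excludes only countably many affine hyperplanes.
\item Given open $U\ni p$ and $V\ni q$, with such $p,q$ available in any open sets, and a heteroclinic point $z$, minimality yields arbitrarily large $m_1,m_2$ with $\alpha(-m_1a)z\in U$ and $\alpha(m_2a)z\in V$.
\end{enumerate}
Step 3 is exactly the topological transitivity of $\alpha(a)$ your Baire argument needs, and that argument then finishes the proof.
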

\begin{remark}
    In the original version of Lemma 4.17 in \cite{SpatzierVinhage}, 
    there are more properties with an additional assumption that the action is totally Anosov. In our setting, we assume only that the action is Anosov, rather than totally Anosov, and the above lemma remains valid under this weaker hypothesis, following the same argument.
\end{remark}

The following is a higher rank version of the Livshitz theorem for higher rank Anosov actions. The proof for the Hölder cocycle closely follows an argument in Theorem 2.10 of \cite{KatokSpatzier}, but without the volume preserving assumption and with cone transitivity. We next upgrade the regularity of Hölder transfer functions using Journé's lemma. In particular, when a coboundary $\varphi$ is $C^r$ for $r \in (0,\infty]$, we obtain a $C^{r_*}$ transfer function $H$. Here $r_* = r-1+\text{Lip}$ if $r \in \mathbb{N}\setminus \{1\}$ and $r_* = r$ otherwise. The argument proceeds in two steps. First, we prove that a Hölder transfer function $H$ is $C^r$ along the orbit direction, stable and unstable foliations of an Anosov element. Then we apply Journé's lemma \cite{Journe} to deduce global regularity. A similar application of Journé's lemma appears in \cite[Theorem B]{GRH2} of Gogolev and Rodriguez-Hertz.

\begin{theorem}[Livshitz theorem]
\label{Livshitz}
    Let $\alpha \colon \mathbb{R}^k \curvearrowright M$ be a cone transitive, Anosov action on a closed manifold $M$ and $\varphi \colon \mathbb{R}^k \times M \to \mathbb{R}^l$ be a H\"older cocycle over $\alpha$.
    If $\mathcal{P}^{\varphi}_p =0 $, for all $p \in M$ periodic, then $\varphi$ is a H\"older coboundary.
    Furthermore, if $\varphi$ is $C^r$ for $r \in (0,\infty]$, then the transfer function is $C^{r_*}$.
\end{theorem}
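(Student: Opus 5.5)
We prove the Hölder statement first, following the strategy of Theorem 2.10 of \cite{KatokSpatzier}, and then upgrade regularity with Journé's lemma.

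\textbf{Step 1: a transitive Anosov element.} By \cref{conetransitive}, there is an Anosov element $a\in\R^k$ and a point $x_0\in M$ whose forward orbit $\{\alpha(na)x_0\}_{n\in\N}$ is dense. Consider the single map $g=\alpha(a)$ and the function $\psi(x)=\varphi_x(a)$. By the cocycle identity, $\varphi_x(na)=\sum_{j=0}^{n-1}\psi(g^jx)$.

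\textbf{Step 2: define the transfer function along the dense orbit.} Set
\begin{equation*}
H(\alpha(na)x_0)=\varphi_{x_0}(na).
\end{equation*}
We show $H$ is uniformly Hölder on this orbit, so that it extends to $M$. Suppose $\alpha(na)x_0$ and $\alpha(ma)x_0$ are $\epsilon$-close, with $m>n$. The segment of the orbit between them is $\epsilon$-closed for the partially hyperbolic map $g$, whose neutral direction is $T\mathcal{O}$. The Anosov closing lemma for actions, which follows from the hyperbolicity transverse to orbits as in \cite{KatokSpatzier}, then gives a periodic point $p$ and a small $c\in\R^k$ such that $(m-n)a+c\in\text{Stab}_\alpha(p)$, and the orbit of $p$ exponentially shadows the segment in both time directions.

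\textbf{Step 3: the closing estimate.} By hypothesis $\mathcal{P}^\varphi_p=0$, so $\varphi_p((m-n)a+c)=0$. The standard exponential-closeness estimate bounds
\begin{equation*}
|\varphi_{\alpha(na)x_0}((m-n)a)-\varphi_p((m-n)a+c)|\le C\epsilon^\theta,
\end{equation*}
using that $\varphi$ is Hölder and that $|\varphi_x(c)|\le C|c|\le C\epsilon$. This gives $|H(\alpha(ma)x_0)-H(\alpha(na)x_0)|\le C\epsilon^\theta$. Hence $H$ extends to a Hölder function on $M$ satisfying $\psi=H\circ g-H$.

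\textbf{Step 4: the identity for all $b\in\R^k$.} Define $\eta_x(b)=\varphi_x(b)-H(\alpha(b)x)+H(x)$. This is a cocycle, and it vanishes at $b=a$ for every $x$ by density and continuity. Commutativity then gives
\begin{equation*}
\eta_x(b)=\eta_{\alpha(a)x}(b)=\cdots=\eta_{\alpha(na)x}(b),
\end{equation*}
so $\eta(b)$ is a continuous $g$-invariant function. Since $g$ has a dense orbit, $\eta(b)$ is constant, say $\eta(b)=L(b)$, and $L$ is linear by the cocycle identity. Evaluating at a periodic point $p$ and $b\in\text{Stab}_\alpha(p)$ gives $L(b)=\mathcal{P}^\varphi_p(b)=0$. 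A lattice spans $\R^k$, so $L=0$ and $\varphi$ is a Hölder coboundary.

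\textbf{Step 5: regularity.} Suppose $\varphi$ is $C^r$. Along orbits, $H(\alpha(b)x)=H(x)+\varphi_x(b)$, so $H$ is $C^r$ in the orbit direction. Along a stable leaf $W^s_a(x)$, telescoping gives
\begin{equation*}
H(y)-H(x)=\sum_{n\ge0}\bigl(\psi(g^nx)-\psi(g^ny)\bigr),
\end{equation*}
which converges exponentially. Differentiating term by term shows $H$ is uniformly $C^r$ along the leaves: the derivatives of $g^n$ restricted to $E^s_a$ contract, and the leaves are uniformly $C^r$ (or $C^\infty$). The unstable leaves are handled the same way using $-a$. The weak stable foliation $W^s_a\oplus\mathcal{O}$ and the strong unstable foliation are transverse, continuous, and have uniformly smooth leaves. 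Applying Journé's lemma \cite{Journe} twice, first to combine stable and orbit directions and then to add the unstable direction, yields $H\in C^{r_*}$. The Lipschitz loss at integer $r\ge2$ is the standard one in Journé's lemma.

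\textbf{Main obstacle.} The hardest step is Step 3: carrying out the closing and shadowing for a partially hyperbolic element, where the closed orbit closes up only after a small shift $c$ in the orbit direction. Bounding the cocycle difference uniformly then requires a careful exponential-convergence estimate in both stable and unstable directions. A second delicate point in Step 5 is the uniform $C^r$ bounds along the leaves, which need to be checked by the usual Hölder-bunching-free argument for $C^r$ functions along contracting leaves.
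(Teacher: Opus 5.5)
Your proposal is correct and follows essentially the same route as the paper. Both arguments define $H$ along the dense forward orbit of an Anosov element $a$, use commutativity to show the residual cocycle is $\alpha(a)$-invariant and hence constant, and kill it with the periodic data. Both then upgrade regularity via the telescoping series along stable and unstable leaves followed by two applications of Journ\'e's lemma. The differences are cosmetic: the paper cites Lemma 4.8 of \cite{KatokSpatzier} for the H\"older extension of $H$, where you sketch the closing-lemma argument behind it, and you conclude via linearity and vanishing on a lattice rather than by integrating against $\mu_p$ and using density of periodic orbits.
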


\begin{proof}
    Since the action $\alpha$ is cone transitive, there exist an Anosov element $a\in \R^k$ and a point $x\in M$ with a dense forward orbit $\{\alpha(na)x\colon n\in\N\}$ in $M$ by \cref{conetransitive}. Define a function $H\colon M\to\R^l$ on the dense subset by $$H(\alpha(na)x)=\varphi_{x}(na)$$ for each $n\in\N$. By Lemma 4.8 in \cite{KatokSpatzier}, $H$ is Hölder continuous with the same Hölder exponent as $\varphi$, and it extends uniquely to a Hölder function on all of $M$. Moreover, $H(\alpha(a)x)-H(x)$ is also Hölder. Since $H(\alpha(a)x)-H(x)=\varphi_x(a)$ holds on the dense subset of $M$, continuity implies that it holds on all of $M$.

    Define the cocycle $\psi_x(b) = \varphi_x(b)-H(\alpha(b)x)+H(x)$.
    It satisfies for all $x \in M$ and $b \in \mathbb{R}^k$
    \begin{equation*}
        \psi_{x}(a+b) = \psi_x(b) + \psi_{\alpha(b)x}(a) = \psi_{x}(a)+\psi_{\alpha(a)x}(b).
    \end{equation*}
    Since $\psi_{y}(a) = 0$ for any $y \in M$,
    \begin{equation*}
        \psi_x(b) = \psi_{\alpha(a)x}(b).
    \end{equation*}
    Since $\alpha(a)$ has a dense orbit, $x \mapsto \psi_x(b)$ is constant.
    However, for $p \in \text{Per}(\alpha)$, denoting by $\mu_p$ the $\alpha$ invariant measure supported on the orbit of $p$,
    \begin{equation*}
        \int \psi_{x}(b) d\mu_p(x) = \mathcal{P}^{\varphi}_p(b) = 0.
    \end{equation*}
    Therefore, $\psi_p(b) = 0$ for any $p \in \text{Per}(\alpha)$.
    Since periodic orbits are dense and $x \mapsto \psi_x(b)$ is continuous, then $\psi_x(b) = 0$ for all $x \in M$ and $b \in \mathbb{R}^k$.
    Therefore, $\varphi$ is a Hölder coboundary. 

    Fix an Anosov element $a\in\R^k$. We first show that the transfer function $H$ is $C^r$ along the unstable foliation $\mathcal{W}^u_a$. Since $H(x)=H(\alpha(-a)x)-\varphi_x(-a)$, iterating gives 
    \begin{equation*}
    H(x)=H(\alpha(-na)x)-\sum_{j=0}^{n-1}\varphi_{\alpha(-ja)x}(-a).
    \end{equation*}
    Define $f_j(x):=\varphi_{\alpha(-ja)x}(-a).$ 
    Now let $y\in\mathcal{W}^u_a(x).$ Since $\alpha(-a)$ contracts unstable leaves exponentially and $H$ is Hölder continuous, $$H(\alpha(-na)x)-H(\alpha(-na)y)\rightarrow0$$ as $n\rightarrow0$. Hence, $$H(y)-H(x)=-\sum_{j=0}^{\infty}f_j(y)-f_j(x).$$ Therefore, it suffices to prove that the series
    $$y \mapsto \sum_{j=0}^\infty f_j(y) - f_j(x)$$ converges in the $C^r$-topology along unstable leaves.
    Each function $f_j$ is $C^r$ along unstable leaves since $\varphi$ is $C^r$ and the unstable foliation is $\alpha$ invariant with $C^r$ leaves of $\mathcal{W}_a^u$. We first estimate the derivatives of $f_j.$
    Denote by $\varphi^{-a} = \varphi(-a,\cdot)$.
    Since $f_j=\varphi^{-a}\circ\alpha(-ja)$, the chain rule gives 
    $$D_yf_j[v]=D_{\alpha(-ja)y}\varphi^{-a}[D_y\alpha(-ja)[v]].$$ Along unstable leaves, $D\alpha(-ja)$ decays exponentially and $D\varphi^{-a}$ is uniformly bounded. Thus $$\|Df_j|_{\mathcal{W}_a^u}\|\leq C\lambda^j$$ for some constants $C>0$ and $0<\lambda<1.$

    For the second derivative,
    \begin{align*}
        D_y^2f_j[u,v]=& D^2_{\alpha(-ja)y}\varphi^{-a}[D_y\alpha(-ja)[u],D_y\alpha(-ja)[v]]\\&+D_{\alpha(-ja)y}\varphi^{-a}[D_y^2\alpha(-ja)[u,v]].
    \end{align*}
    The first term already decays exponentially. Thus $D^2_yf_j$ decays exponentially if $D^2\alpha(-ja)|_{\mathcal{W}_a^u}$ decays exponentially. 
    Iterating the chain rule gives
    \begin{align*}
        D^2_x\alpha(-ja)=
        \sum_{k=0}^{j-1}&D_{\alpha(-(k+1)a)x}\alpha(-(j-1-k)a)\\&\circ D^2_{\alpha(-ka)x}\alpha(-a)[D_x\alpha(-ka),D_x\alpha(-ka)].
    \end{align*}
    Since $D\alpha(-ma)$ contracts exponentially along unstable leaves and $D^2\alpha(-a)$ is uniformly bounded, each term is bounded by $C'\lambda^{j-1+k}$ for some constant $C'>0$. We obtain $$\|D^2\alpha(-ja)\|\leq \sum_{k=0}^{j-1}C'\lambda^{j-1+k}\leq C''\lambda^{j-1}$$ for some constant $C''>0.$ Consequently, $D^2f_j$ decays exponentially. 

    The same argument works for higher derivatives. By the chain rule, each $m$-derivative of $f_j$ along the unstable foliation can be written as a finite sum of terms of the form 
    \begin{equation*}
        D^k_{\alpha(-ja)y}\varphi^{-a}[D^{i_1}_y\alpha(-ja),...,D^{i_k}_y\alpha(-ja)],
    \end{equation*}
    where the integer $1\leq k\leq m$ and $i_1+...+i_k=m$ with each $i_l\geq1.$ 
    Since the derivatives of $\varphi$ are uniformly bounded and all derivatives of $\alpha(-ja)$ decay exponentially along unstable leaves, it follows inductively that $D^mf_j$ is bounded by a constant times $\lambda^j$ for every integer $m\leq r$.
    
    It remains to show that $D^{\lfloor r\rfloor}f_j$ is $\theta$-Hölder along unstable leaves for $0<\theta\leq r-\lfloor r\rfloor$. 
    For $y,z\in\mathcal{W}^u_a(x)$, $D^{\lfloor r\rfloor}f_j(y)-D^{\lfloor r\rfloor}f_j(z)$ is a finite sum of terms of the form 
    \begin{align*}
        &D^{k}_{\alpha(-ja)y}\varphi^{-a}[D^{i_1}_y\alpha(-ja),...D^{i_k}_y\alpha(-ja)] \\ 
        &-D^{k}_{\alpha(-ja)z}\varphi^{-a}[D^{i_1}_z\alpha(-ja),...D^{i_k}_z\alpha(-ja)],
    \end{align*}
    where the integer $1\leq k\leq\lfloor r\rfloor$, $i_1+...+i_k=\lfloor r\rfloor$, and each $i_l\geq 1.$ Each such term can be decomposed as
    \begin{align*}
        &(D^{k}_{\alpha(-ja)y}\varphi^{-a}-D^{k}_{\alpha(-ja)z}\varphi^{-a})[D^{i_1}_y\alpha(-ja),...D^{i_k}_y\alpha(-ja)] \\
        &+D^{k}_{\alpha(-ja)z}\varphi^{-a}([D^{i_1}_y\alpha(-ja),...D^{i_k}_y\alpha(-ja)]-[D^{i_1}_z\alpha(-ja),...D^{i_k}_z\alpha(-ja)]).
    \end{align*}
    Since $D^{k}\varphi$ is $\theta$-Hölder and $\alpha(-a)$ contracts unstable leaves that decay exponentially, the first term is bounded by
    \begin{align*}
        C\operatorname{d}(\alpha(-ja)y,\alpha(-ja)z)^\theta\prod_{l=1}^k\|D^{i_l}_y\alpha(-ja)\|.
    \end{align*}
    Using $\operatorname{d}(\alpha(-ja)y,\alpha(-ja)z)\leq C\lambda^j\operatorname{d}(y,z)$ and $\alpha(-a)$ contracts unstable leaves that decay exponentially, this is bounded by
    \begin{align*}
        C\lambda^{j\theta}\operatorname{d}(y,z)^\theta\lambda^{jk}\leq C\lambda^{j}\operatorname{d}(y,z)^\theta,
    \end{align*}
    after changing $\lambda$. 
    Similarly, since $D\alpha(-ja)$ is $\theta$-Hölder along unstable leaves, the second term is also bounded by $C\lambda^{j}\operatorname{d}(y,z)^\theta$. Since there are only finitely many terms, we obtain $D^{\lfloor r\rfloor}f_j$ is $\theta$-Hölder along unstable leaves.
    
    Therefore the series $\sum_j (f_j(\cdot)-f_j(x))$ converges in $C^r$ topology along unstable leaves, and therefore $H$ is $C^r$ along $\mathcal{W}^u_a$. The proof for the stable foliation $\mathcal{W}^s_a$ is analogous. Thus $H$ is $C^r$ along both the stable and unstable foliations.
    It is clear that $H$ is $C^r$ along the orbit direction since the coboundary equation $H(\alpha(a)x) = \varphi_x(a)+H(x).$

    Applying Journé's lemma to the transverse unstable and orbit foliations, we have $H$ is $C^{r_*}$ along the weak unstable foliation. We then apply Journé's lemma again to the transverse weak unstable and stable foliations, we obtain $H$ is $C^{r_*}$ on $M$.
\end{proof}

\subsection{Time changes}
Consider an action $\alpha$ that is a time change of a locally free action $\alpha_0 \colon \mathbb{R}^k \curvearrowright M$. 
Let $\varphi \colon \mathbb{R}^k \times M \to \mathbb{R}^k$ be the function satisfying 
\begin{equation*}
    \alpha_0(a)x=\alpha(\varphi_x(a))x \quad \text{for}~ \text{all}~ a\in\R^k, x\in M.
\end{equation*}
By Lemma 2.3 of \cite{Vinhage}, $\varphi$ is a cocycle over $\alpha_0$.
For $x \in M$, it follows from the definition that
\begin{equation*}
    \mathcal{P}^{\varphi}_p(\text{Stab}_{\alpha_0}(p)) = \text{Stab}_{\alpha}(p).
\end{equation*}

Recall that for $x \in M$, $\varphi_x \colon \mathbb{R}^k \to \mathbb{R}^k$ is a homeomorphism and we call such cocycles \emph{invertible}. Conversely, if an invertible cocycle is given, we can obtain a time change action from it, as stated in the following Lemma which follows from the proof of Lemma 2.4 of \cite{Vinhage}.

\begin{Lemma}[\cite{Vinhage}, Lemma 2.4] \label{invertiblecocycle}
Let $\varphi\colon\R^k\times M\to\R^k$ be an invertible cocycle over an action $\alpha_0\colon\R^k\curvearrowright M$. Then there exists a time change action $\alpha$ of $\alpha_0$ determined by $\varphi$.
\end{Lemma}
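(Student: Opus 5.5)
The plan is to construct the time change directly by integrating the vector fields that $\varphi$ determines on $M$; the only nontrivial point is that the inverse cocycle relation produces a genuine action. We use the convention of the preceding paragraph: $\varphi$ satisfies \eqref{cocycle} over $\alpha_0$, each $\varphi_x$ is a homeomorphism of $\R^k$, and we look for an action $\alpha$ with
\begin{equation*}
\alpha_0(a)x=\alpha(\varphi_x(a))x .
\end{equation*}
The natural candidate is
\begin{equation*}
\alpha(b)x:=\alpha_0(\varphi_x^{-1}(b))x .
\end{equation*}
This is well defined because $\varphi_x$ is a bijection of $\R^k$.

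The main step is to show that $\alpha$ is an action, i.e.\ that $\alpha(b+c)x=\alpha(c)\alpha(b)x$. Set $a=\varphi_x^{-1}(b)$ and $y=\alpha_0(a)x$. By \eqref{cocycle}, for any $d\in\R^k$,
\begin{equation*}
\varphi_x(a+d)=\varphi_y(d)+b .
\end{equation*}
Choose $d=\varphi_y^{-1}(c)$. Then $\varphi_x(a+d)=b+c$, so $\varphi_x^{-1}(b+c)=a+\varphi_y^{-1}(c)$. Applying $\alpha_0$ gives
\begin{equation*}
\alpha(b+c)x=\alpha_0(\varphi_y^{-1}(c))\,\alpha_0(a)x=\alpha(c)y=\alpha(c)\alpha(b)x .
\end{equation*}
Finally, \eqref{cocycle} with $a=b=0$ gives $\varphi_x(0)=0$, hence $\alpha(0)=\mathrm{id}$.

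Next we check the time-change form and the regularity. Write $\psi_x:=\varphi_x^{-1}$. The definition reads $\alpha(b)x=\alpha_0(\psi_x(b))x$, and each $\psi_x$ is a homeomorphism of $\R^k$. This is exactly the definition of a time change in the introduction, with $\psi$ in place of the map called $\varphi$ there.

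Continuity of $(b,x)\mapsto\psi_x(b)$ is the step I expect to be the main obstacle, since inverses of a continuous family of homeomorphisms need not vary continuously without some uniformity. I would first try to get uniform properness from the cocycle identity and compactness of $M$. The bound $|\varphi_x(a)|\le C|a|$ for $|a|\le1$ extends additively along the orbit, which controls $\varphi$ from above. For the lower bound, $|\varphi_x(a)|\to\infty$ as $|a|\to\infty$ uniformly in $x$, I would use that $\varphi_x(a)=0$ forces $\alpha(0)$ and $\alpha_0(a)$ to agree at $x$, together with local injectivity near $a=0$.

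Once uniform properness is available, continuity of $(b,x)\mapsto\psi_x(b)$ follows by a compactness argument. In the $C^s$ setting with $D_a\varphi_x$ invertible, the inverse function theorem upgrades this to $C^s$ regularity of $\psi$, hence of $\alpha$. Local freeness of $\alpha$ is inherited from $\alpha_0$, because $\alpha$ and $\alpha_0$ have the same orbits and the map from $\R^k$ to $\alpha$-parameters is a local homeomorphism. This follows the outline of Lemma 2.4 of \cite{Vinhage}.
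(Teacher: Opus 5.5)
Your formula $\alpha(b)x:=\alpha_0(\varphi_x^{-1}(b))x$ and your check that it defines an action are correct. The identity $\varphi_x^{-1}(b+c)=\varphi_x^{-1}(b)+\varphi_y^{-1}(c)$, with $y=\alpha_0(\varphi_x^{-1}(b))x$, together with $\varphi_x(0)=0$ and $\mathrm{Stab}_\alpha(x)=\varphi_x(\mathrm{Stab}_{\alpha_0}(x))$ (which is what your local-freeness remark amounts to), is the substance of the statement. The paper gives no argument of its own and defers to the proof of Lemma 2.4 in \cite{Vinhage}. Your opening sentence announces an integration of vector fields, but you never carry it out and do not need it.

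The gap is in the step you yourself flag as the main obstacle: the route you sketch to uniform properness does not work. Injectivity and compactness only give $c=\min_{x\in M,\,|a|=1}|\varphi_x(a)|>0$. When $k\ge 2$, the identity $\varphi_x(na)=\sum_{j=0}^{n-1}\varphi_{\alpha_0(ja)x}(a)$ yields no growth from this, because the summands, each of norm at least $c$, can point in different directions and cancel. Only for $k=1$ with $M$ connected do they all have one sign. Your observation that $\varphi_x(a)=0$ forces $a\in\mathrm{Stab}_{\alpha_0}(x)$ is pointwise information and says nothing uniform in $x$.

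No dynamics is needed here. The map $\Phi(a,x)=(\varphi_x(a),x)$ is a continuous bijection of the boundaryless manifold $\R^k\times M$ onto itself. By invariance of domain it is open, hence a homeomorphism, so $(b,x)\mapsto\psi_x(b)$ is jointly continuous. Uniform properness is then a consequence (since $\Phi^{-1}(\bar B_R\times M)$ is compact), not an input.

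In the $C^s$ case, $s\ge 1$, $D\Phi$ is block triangular with diagonal blocks $D\varphi_x(a)$ (derivative in the $\R^k$ variable) and the identity. If $D\varphi_x(a)$ is invertible, the inverse function theorem makes $\Phi$ a bijective local $C^s$ diffeomorphism, hence a $C^s$ diffeomorphism. This gives joint $C^s$ regularity of $\psi$ in one step.

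You were right to treat invertibility of $D\varphi_x(a)$ as an extra hypothesis. By the cocycle identity it equals $D\varphi_{\alpha_0(a)x}(0)$, and it does not follow from each $\varphi_x$ being a homeomorphism. For example, take a flow without fixed points on a compact manifold and $u\ge 0$ vanishing at exactly one point. Then $\varphi_x(t)=\int_0^t u(f^\tau x)\,d\tau$ gives homeomorphisms of $\R$, but $\varphi_x^{-1}$ fails to be $C^1$ for $x$ equal to that point.
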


Consider two invertible cocycles $\varphi, \psi\colon\R^k\times M\to\R^k$ over an action $\alpha_0\colon\R^k\curvearrowright M$. By the Lemma above, there exist two time change actions $\alpha_\varphi$ and $\alpha_\psi$ of $\alpha_0$ determined by $\varphi$ and $\psi$, respectively. For all $a\in\R^k$ and $x\in M$, 
\begin{align*}          
    &\alpha_0(a)x=\alpha_\varphi(\varphi_x(a))x, \\
    &\alpha_0(a)x=\alpha_\psi(\psi_x(a))x.
\end{align*}
Since $\varphi$ is an invertible cocycle, there is an inverse cocycle $\varphi^{-1}$ satisfying
\begin{equation*}
    \varphi^{-1}_x(\varphi_x(a))=a \quad \text{and}\quad \varphi_x(\varphi_x^{-1}(a))=a.
\end{equation*}

\begin{Lemma}\label{coboundary}
    If $\varphi-\psi$ is a $C^r$ coboundary over $\alpha_0$, then $\alpha_\varphi$ is $C^r$ conjugate to $\alpha_\psi$.
\end{Lemma}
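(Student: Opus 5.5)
Write $\varphi_x(a)-\psi_x(a) = H(\alpha_0(a)x)-H(x)$ with $H\colon M\to\mathbb{R}^k$ a $C^r$ transfer function. The natural candidate for the conjugacy is a map that pushes each point along its $\alpha_\psi$-orbit by the amount dictated by $H$. I would take
\begin{equation*}
h(x)=\alpha_\psi(H(x))x ,
\end{equation*}
possibly with a sign change to be settled by the computation, and aim to show $h\circ\alpha_\varphi(b)=\alpha_\psi(b)\circ h$ for all $b\in\mathbb{R}^k$. Since $\alpha_\psi(\cdot)x$ is $C^r$ in $(a,x)$ and $H$ is $C^r$, $h$ is $C^r$. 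The same formula with $-H$ gives the candidate inverse $x\mapsto\alpha_\psi(-H(x))x$; this is a genuine inverse once we know $H$ is constant along the relevant orbit pieces.

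To verify conjugacy, I will work with $\alpha_0$-times, since both actions are reparametrizations of $\alpha_0$. Set $b=\varphi_x(a)$. Then $\alpha_\varphi(b)x=\alpha_0(a)x=\alpha_\psi(\psi_x(a))x$. Hence
\begin{equation*}
h(\alpha_\varphi(b)x)=\alpha_\psi\bigl(H(\alpha_0(a)x)\bigr)\alpha_\psi(\psi_x(a))x=\alpha_\psi\bigl(H(\alpha_0(a)x)+\psi_x(a)\bigr)x .
\end{equation*}
By the coboundary equation, $H(\alpha_0(a)x)+\psi_x(a)=\varphi_x(a)+H(x)=b+H(x)$, so
\begin{equation*}
h(\alpha_\varphi(b)x)=\alpha_\psi(b)\alpha_\psi(H(x))x=\alpha_\psi(b)h(x).
\end{equation*}
Because $\varphi_x$ is a bijection of $\mathbb{R}^k$, every $b$ arises this way, which gives the conjugacy relation for all $b$.

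The main obstacle is showing that $h$ is a homeomorphism, i.e.\ bijective with $C^r$ inverse; the pointwise identity itself is straightforward. For the inverse I set $g(y)=\alpha_\psi(-H(y))y$. Then
\begin{equation*}
g(h(x))=\alpha_\psi\bigl(-H(h(x))+H(x)\bigr)x ,
\end{equation*}
so it suffices that $H(h(x))=H(x)$. This holds in general only if $H$ is invariant along the displacement, which need not be true. I would therefore adjust the argument in one of two ways:
\begin{itemize}
\item Define the inverse instead by solving the fixed-point equation $y=\alpha_\psi(H(y))x$, using the conjugacy equation already proved.
\item Argue directly via topology. The conjugacy relation shows $h$ maps $\alpha_\varphi$-orbits onto $\alpha_\psi$-orbits, which are the same $\alpha_0$-orbits. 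On each orbit $\alpha_\psi(\cdot)x$, the map $h$ restricts to a translation-type map $b\mapsto b+\text{const}$ in $\alpha_\varphi$-time coordinates. That restriction is bijective on the orbit, and hence $h$ is bijective on $M$. Its inverse is then the map obtained symmetrically by exchanging the roles of $\varphi$ and $\psi$ (using $-H$), which is $C^r$ by the same reasoning.
\end{itemize}
I expect that careful bookkeeping of which action carries the displacement (perhaps $\alpha_\varphi(-H(x))x$ rather than $\alpha_\psi(H(x))x$) resolves the invertibility cleanly, with the identity $h^{-1}\circ h=\mathrm{id}$ following from the cocycle relation.
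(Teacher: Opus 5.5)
Your proof of the intertwining identity is the paper's argument. It uses the same map $h(x)=\alpha_\psi(H(x))x$ (the paper's $\pi$) and the same use of the coboundary equation. The only cosmetic difference is that you parametrize by $b=\varphi_x(a)$ and invoke surjectivity of $\varphi_x$, while the paper substitutes $b=\varphi_x^{-1}(a)$ directly.

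The paper's proof stops at $\pi\circ\alpha_\varphi(a)=\alpha_\psi(a)\circ\pi$ and never checks that $\pi$ is invertible. You are right to flag this, but your proposal does not settle it either:
\begin{itemize}
\item In your first option the fixed-point equation has the wrong sign: $h(y)=x$ means $y=\alpha_\psi(-H(y))x$. Nothing guarantees that this equation has a unique solution.
\item In your second option, $h$ is a translation only from $\alpha_\varphi$-time to $\alpha_\psi$-time. Injectivity on a non-simply-connected orbit needs $\operatorname{Stab}_{\alpha_\psi}(x)\subseteq\operatorname{Stab}_{\alpha_\varphi}(x)$. This inclusion does hold: the coboundary vanishes on $\operatorname{Stab}_{\alpha_0}(x)$, so $\varphi_x=\psi_x$ there, and $\operatorname{Stab}_{\alpha_\varphi}(x)=\varphi_x(\operatorname{Stab}_{\alpha_0}(x))$. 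But you never state it.
\item The claim that the inverse is ``the symmetric map'' is asserted, not checked. Two conjugacies composed give only a self-conjugacy, not necessarily the identity.
\end{itemize}

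Your closing guess is the clean fix: $g(y)=\alpha_\varphi(-H(y))y$ is a two-sided inverse of $h$.
\begin{itemize}
\item Given $x$, pick $a$ with $\psi_x(a)=H(x)$, so that $h(x)=\alpha_0(a)x$. The coboundary equation gives $H(h(x))=H(x)+\varphi_x(a)-\psi_x(a)=\varphi_x(a)$. Hence $h(x)=\alpha_\varphi(\varphi_x(a))x=\alpha_\varphi(H(h(x)))x$, and therefore $g(h(x))=x$.
\item Symmetrically, given $y$, pick $a$ with $\varphi_y(a)=-H(y)$, so that $g(y)=\alpha_0(a)y$. Then $H(g(y))=H(y)+\varphi_y(a)-\psi_y(a)=-\psi_y(a)$, so $g(y)=\alpha_\psi(-H(g(y)))y$, and therefore $h(g(y))=y$.
\end{itemize}
Since $g$ is $C^r$ for the same reason $h$ is, this completes the lemma. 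It also fills the same omission in the paper's proof.
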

\begin{proof}
    Since $\varphi-\psi$ is a $C^r$ coboundary, there exists a $C^r$ map $H\colon M\to\R^k$ such that $\varphi_x(b)-\psi_x(b)=H(\alpha_0(b)x)-H(x)$, for all $b\in\R^k$ and $x\in M$. Setting $b=\varphi^{-1}_x(a)$, we obtain $a-\psi_x(\varphi^{-1}_x(a))=H(\alpha_\varphi(a)x)-H(x)$. By the definitions of $\alpha_\varphi$ and $\alpha_\psi$, and since the actions are abelian, we have
    \begin{align*}
        \alpha_\varphi(a)x 
        &= \alpha_0(\varphi^{-1}_x(a))x\\
        &= \alpha_\psi(\psi_x(\varphi^{-1}_x(a)))x \\
        &= \alpha_\psi(a-H(\alpha_\varphi(a)x)+H(x))x \\
        &= \alpha_\psi(H(\alpha_\varphi(a)x))^{-1}\alpha_\psi(a)\alpha_\psi(H(x))x.
    \end{align*}
    Therefore, $\alpha_\psi(H(\alpha_\varphi(a)x))\alpha_\varphi(a)x=\alpha_\psi(a)\alpha_\psi(H(x))x$. Defining a $C^r$ map $\pi\colon M\to M$ by $\pi(x)=\alpha_\psi(H(x))x$, we obtain $\pi(\alpha_\varphi(a)x)=\alpha_\psi(a)\pi(x)$, which proves the Lemma.
\end{proof}

\subsection{Adapted cocycles}\label{manesection}
The goal of this subsection is to prove \cref{softmaneLemma} using standard averaging arguments.
It is used both to construct invertible cocycles in the proof of \cref{thm2} and to verify hyperbolicity of Anosov elements in the proof of \cref{lyapexpthm}.
We remark that sharp versions of \cref{softmaneLemma} have been shown for the rank $1$ case (see \cite{LTDiffeos} for $\mathbb{Z}$ actions and \cite{LopesThieullen} for $\mathbb{R}$ actions).

Let $M$ be a compact metric space and $\alpha \colon \mathbb{R}^k \curvearrowright M$ be a $C^s$ action, for some $s\geq0$. If $s\geq 1$, then we also assume that $M$ is a manifold.
Let $\varphi \colon \mathbb{R}^k \times M \to \mathbb{R}^l$ be a $C^r$ cocycle over $\alpha$ with $r \leq s$.
Given $\nu$ a probability measure on $\mathbb{R}^k$, define the $C^r$ cocycle $\varphi^{\nu}$ by
\begin{equation*}
    \varphi^{\nu}_x(a) = \int \varphi_{\alpha(c)x}(a)d\nu(c).
\end{equation*}
\begin{Lemma}
    \label{nucohomologous}
    The cocycle $\varphi^{\nu}$ is $C^r$ cohomologous to $\varphi$.
\end{Lemma}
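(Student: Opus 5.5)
The plan is to exhibit an explicit transfer function. I will define $H\colon M\to\mathbb{R}^l$ by
\begin{equation*}
    H(x)=\int \varphi_x(c)\,d\nu(c),
\end{equation*}
and show that $\varphi^{\nu}_x(a)-\varphi_x(a)=H(\alpha(a)x)-H(x)$. For $H$ to be defined and of class $C^r$, I assume $\nu$ is compactly supported. This is the implicit standing assumption, and in the averaging arguments $\nu$ will be normalized Lebesgue measure on a box. Then $x\mapsto\varphi_x(c)$ is $C^r$ uniformly for $c$ in the support, and differentiation under the integral sign gives $H\in C^r$ with derivatives bounded by $\sup_{c\in\operatorname{supp}\nu}\|\varphi(c,\cdot)\|_{C^r}$. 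The same reasoning shows $\varphi^\nu$ is a $C^r$ cocycle; the cocycle identity for $\varphi^\nu$ follows by integrating the identity for $\varphi$ at the points $\alpha(c)x$ and using commutativity $\alpha(a)\alpha(c)=\alpha(c)\alpha(a)$.

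The key step is the pointwise identity. Apply \eqref{cocycle} to $c+a=a+c$ in the two orders:
\begin{equation*}
    \varphi_x(c)+\varphi_{\alpha(c)x}(a)=\varphi_x(c+a)=\varphi_x(a)+\varphi_{\alpha(a)x}(c).
\end{equation*}
Rearranging gives
\begin{equation*}
    \varphi_{\alpha(c)x}(a)-\varphi_x(a)=\varphi_{\alpha(a)x}(c)-\varphi_x(c).
\end{equation*}
Since $\nu$ is a probability measure, integrating in $c$ against $\nu$ turns the left side into $\varphi^\nu_x(a)-\varphi_x(a)$ and the right side into $H(\alpha(a)x)-H(x)$. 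So $\varphi^\nu-\varphi$ is a $C^r$ coboundary with transfer function $H$, which is the claim.

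The identity itself is immediate, so the only real obstacle is justifying integrability and regularity. If $\nu$ were allowed non-compact support, one would need a moment condition such as $\int|c|\,d\nu<\infty$ together with the linear growth bound $|\varphi_x(c)|\le C(1+|c|)$. That bound follows from \eqref{cocycle} and compactness of $M$, by subdividing $c$ into unit steps. Dominated convergence then gives continuity, and similarly differentiability provided the $C^r$ norms of $\varphi(c,\cdot)$ grow integrably in $c$. I will sidestep this by taking $\nu$ compactly supported, which suffices for the later applications.
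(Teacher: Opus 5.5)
Your proof is correct and is essentially the paper's argument: the same transfer function $H(x)=\int\varphi_x(c)\,d\nu(c)$, the cocycle identity applied to $c+a=a+c$, and then integration against $\nu$. Your explicit compact-support assumption on $\nu$ is a sensible clarification that the paper leaves implicit, since it only applies the lemma to normalized Lebesgue measure on a box and on a segment.
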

\begin{proof}
    Define $H\colon M \to \mathbb{R}^l$ by
    \begin{equation*}
        H(x) = \int\varphi_x(c)d\nu(c).
    \end{equation*}
    Note that $H$ is $C^r$. For $a \in \mathbb{R}^k$ and $x \in M$,
    \begin{equation}
    \label{differenceH}
        H(\alpha(a)x)-H(x)
        = \int \varphi_{\alpha(a)x}(c)-\varphi_x(c)d\nu(c).
    \end{equation}
    By the cocycle property,
    \begin{equation*}
        \varphi_x(c) + \varphi_{\alpha(c)x}(a)
        = \varphi_x(a+c)
        = \varphi_x(a)+\varphi_{\alpha(a)x}(c).
    \end{equation*}
    Therefore,
    \begin{equation*}
        \varphi_{\alpha(a)x}(c) 
        = \varphi_{\alpha(c)x}(a)-\varphi_x(a)+\varphi_x(c).
    \end{equation*}
    Substituting the above in \eqref{differenceH},
    \begin{equation*}
        H(\alpha(a)x)-H(x)
        = \int \varphi_{\alpha(c)x}(a) d\nu(c) -\varphi_x(a),
    \end{equation*}
    which proves the Lemma.
\end{proof}

Suppose that $\varphi$ is a $\mathbb{R}$-valued cocycle.
Let $\mathcal{M}(\alpha)$ be the set of $\alpha$-invariant measures on $M$ and $\mathcal{M}_{\text{erg}}(\alpha)$ be the ergodic ones.
For $a \in \mathbb{R}^d$ denote by
\begin{equation}
\label{infsup}
    \underline{\varphi}(a) = \underset{\mu \in \mathcal{M}(\alpha)}{\inf}\int \varphi_x(a) d\mu(x)
    \ \text{and} \
    \overline{\varphi}(a) = \underset{\mu \in \mathcal{M}(\alpha)}{\sup}\int \varphi_x(a) d\mu(x).
\end{equation}
It follows from the ergodic decomposition that we could define \eqref{infsup} by only considering the ergodic measures.
\begin{Lemma}
\label{softmaneLemma}
    Let $\varepsilon>0$ and $a \in \mathbb{R}^k$.
    There exists a $C^r$ cocycle $\psi \colon \mathbb{R}^k \times M \to \mathbb{R}$ which is $C^r$ cohomologous to $\varphi$ satisfying for all $t>0$ and $x \in M$,
    \begin{equation*}
        (\underline{\varphi}(a)-\varepsilon)t
        < \psi_x(ta)
        <(\overline{\varphi}(a)+\varepsilon)t
    \end{equation*}
\end{Lemma}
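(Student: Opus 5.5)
The plan is to take $\psi=\varphi^{\nu}$ for a suitable averaging measure and invoke \cref{nucohomologous}. Concretely, for $T>0$ let $\nu_T$ be the normalized Lebesgue measure on the segment $\{sa : s\in[0,T]\}\subset\mathbb{R}^k$, i.e. $\int g\,d\nu_T=\frac1T\int_0^T g(sa)\,ds$. By \cref{nucohomologous}, $\psi:=\varphi^{\nu_T}$ is a $C^r$ cocycle that is $C^r$ cohomologous to $\varphi$. Using the cocycle identity $\varphi_{\alpha(sa)x}(ta)=\varphi_x((s+t)a)-\varphi_x(sa)$, we get
\begin{equation*}
    \psi_x(ta)=\frac1T\int_0^T \bigl(\varphi_x((s+t)a)-\varphi_x(sa)\bigr)\,ds .
\end{equation*}
The key point is that $\psi_x(ta)$ behaves like an ergodic average of length $T$ of the one-parameter cocycle $u\mapsto \varphi_x(ua)$ over the flow $\alpha(ua)$.

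The first step handles the range $0<t\le T$ and reduces it to a uniform ergodic statement. I will show that for $T$ large, $\frac1T\varphi_x(Ta)\in(\underline{\varphi}(a)-\varepsilon,\overline{\varphi}(a)+\varepsilon)$ uniformly in $x$. This is the standard compactness argument. Suppose it fails. Then there are $x_n$ and $T_n\to\infty$ with $\frac1{T_n}\varphi_{x_n}(T_na)$ outside the interval. Define the empirical measures $\mu_n=\frac1{T_n}\int_0^{T_n}\delta_{\alpha(sa)x_n}\,ds$. Writing $\varphi_x(Ta)=\int_0^T \phi(\alpha(sa)x)\,ds$ with $\phi(y)=\frac{d}{du}\big|_{u=0}\varphi_y(ua)$ requires differentiability. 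To avoid that, I will instead use $\varphi_x(Na)=\sum_{j<N}\varphi_{\alpha(ja)x}(a)$ together with the bound \eqref{bddnorm}-type estimate $\sup|\varphi_x(sa)|$ for $s\in[0,1]$. This gives $\frac1T\varphi_x(Ta)=\int \varphi_y(a)\,d\mu(y)+O(1/T)$, where $\mu$ is the discrete empirical measure of $\alpha(a)$. A weak-$*$ limit is $\alpha(a)$-invariant, not necessarily $\alpha$-invariant. To get $\mathbb{R}^k$-invariance, I can average additionally over a large box in $\mathbb{R}^k$. Alternatively, I can interpret $\underline{\varphi}(a),\overline{\varphi}(a)$ via invariant measures of the flow $\alpha(\mathbb{R}a)$ and note that the integral $\int\varphi_y(a)\,d\mu$ over an $\alpha(\mathbb{R}a)$-invariant $\mu$ equals its integral over the $\alpha$-average of $\mu$. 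This holds because $\mathcal{P}$-type integrals of $\varphi(a)$ are invariant under pushing $\mu$ by $\alpha(c)$, since $\varphi_{\alpha(c)y}(a)-\varphi_y(a)$ is an $\alpha(a)$-coboundary with transfer function $\varphi_y(c)$. Averaging the pushforwards $\alpha(c)_*\mu$ over a Følner sequence of boxes in $c$ and taking a limit yields an $\alpha$-invariant measure with the same integral. So the limit value lies in $[\underline{\varphi}(a),\overline{\varphi}(a)]$, a contradiction.

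The second step controls $\psi_x(ta)$ for all $t>0$. For $t\ge T$ this follows from the additivity $\psi_x(ta)=\sum\psi$ over pieces, provided we control small pieces. So the cleaner route is to bound $\psi_x(ta)$ directly for all $t$. Rewrite the displayed formula as
\begin{equation*}
    \psi_x(ta)=\frac1T\int_0^t\bigl(\varphi_{\alpha(ua)x}(Ta)\bigr)\,du,
\end{equation*}
which follows by Fubini/change of variables from the cocycle identity $\varphi_x((s+t)a)-\varphi_x(sa)$ (both sides equal $\frac1T[\int_t^{t+T}-\int_0^T]\varphi_x(\cdot\, a)$). Step one gives $\frac1T\varphi_y(Ta)\in(\underline{\varphi}(a)-\varepsilon,\overline{\varphi}(a)+\varepsilon)$ for every $y$. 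Integrating over $u\in[0,t]$ then yields the strict inequalities for all $t>0$, and $\psi$ is the desired cocycle.

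The main obstacle is step one: deriving uniform convergence to the interval $[\underline{\varphi}(a),\overline{\varphi}(a)]$ defined by $\alpha$-invariant (rather than $\alpha(\mathbb{R}a)$-invariant) measures, which needs the Følner averaging in the transverse directions described above. The rest is the algebraic identity and \cref{nucohomologous}.
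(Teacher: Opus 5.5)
Your proof is correct, but it is organized differently from the paper's. The paper averages twice. It first averages over the box $[-N,N]^k$, so that Krylov--Bogolyubov limits of $\int\delta_{\alpha(c)x_n}\,d\nu_n(c)$ are automatically $\alpha$-invariant; this gives the pointwise bound $\varphi^{\nu_N}_x(a)\in(\underline{\varphi}(a)-\varepsilon,\overline{\varphi}(a)+\varepsilon)$. It then averages over the unit segment $[0,a]$, which converts that bound into $\psi_x(ta)=\int_0^t\varphi^{\nu_N}_{\alpha(sa)x}(a)\,ds$.

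You average only once, along the long segment $\{sa : s\in[0,T]\}$, and your identity $\psi_x(ta)=\frac1T\int_0^t\varphi_{\alpha(ua)x}(Ta)\,du$ does both jobs at once. The price is that your compactness argument only produces $\alpha(a)$-invariant limit measures. You repair this with the identity $\varphi_{\alpha(c)y}(a)-\varphi_y(a)=\varphi_{\alpha(a)y}(c)-\varphi_y(c)$: it shows that $\int\varphi_y(a)\,d\mu(y)$ is unchanged under $\mu\mapsto\alpha(c)_*\mu$ when $\mu$ is $\alpha(a)$-invariant, and F\o lner averaging in $c$ then produces an $\alpha$-invariant measure with the same integral. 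This is valid. It also gives a slightly stronger byproduct: $\underline{\varphi}(a)$ and $\overline{\varphi}(a)$ could equivalently be computed over $\alpha(a)$-invariant measures. The paper sidesteps that fact by building the $\mathbb{R}^k$-averaging into the cocycle itself.

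In a write-up, commit to this second alternative in step one. The vaguer ``average additionally over a large box'' either needs the same invariance observation or changes the cocycle, which is essentially the paper's route. Also drop the false start about additivity for $t\ge T$ in step two.
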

\begin{proof}
    Denote by $\nu_n = \frac{1}{2^kn^k}\text{Leb}|_{[-n,n]^k}$.
    We first claim that there exists $N >0$ such that 
    \begin{equation}
    \label{ineqfora}
        \underline{\varphi}(a)-\varepsilon
        < \varphi^{\nu_N}_x(a)
        < \overline{\varphi}(a)+\varepsilon, \ \text{for all} \ x \in M.
    \end{equation}
    Otherwise, for each $n \in \mathbb{N}$, there exists $x_n \in M$ such that
    $\varphi^{\nu_n}_{x_n}(a) \notin 
    (\underline{\varphi}(a)-\varepsilon,\overline{\varphi}(a)+\varepsilon)$. Let $\mu_n = \int \delta_{\alpha(c)x_n}d\nu_n(c)$ be a sequence of measures on $M$.
    By the Krylov Bogolyubov Theorem, we may pass to a subsequence if necessary and assume $\mu_n \rightarrow \mu$, where $\mu$ is $\alpha$-invariant.
    This implies that
    \begin{equation*}
        \varphi^{\nu_n}_{x_n}(a)
        = \int \varphi_x(a)d\mu_n(x) \rightarrow \int \varphi_x(a) d\mu(x)
        \in [\underline{\varphi}(a),\overline{\varphi}(a)],
    \end{equation*}
    which is a contradiction and proves the claim.

    For $x \in M$ and $b \in \mathbb{R}^k$, define
    \begin{equation*}
        \psi_x(b) = 
        \int_0^1 \varphi^{\nu_N}_{\alpha(sa)x}(b) ds.
    \end{equation*}
    Note that $\psi = \left(\varphi^{\nu_N}\right)^\nu$ for $\nu$ normalized Lebesgue measure on the segment $[0,a] \subset \mathbb{R}^k$.
    Therefore, $\psi$ is $C^r$ cohomologous to $\varphi$ by \cref{nucohomologous}.
    By the cocycle property, $\varphi^{\nu_N}_{\alpha(ta)x}(b) = \varphi^{\nu_N}_x(ta+b)-\varphi^{\nu_N}_x(b)$.
    Taking $b = ta$,
    \begin{align*}
        \psi_x(ta) =& \int_0^1 \varphi^{\nu_N}_x(ta+sa)-\varphi^{\nu_N}_x(sa)ds\\
        =&\int_t^{1+t}\varphi^{\nu_N}_x(sa)ds -\int_0^1\varphi_x^{\nu_N}(sa)ds.
    \end{align*}
    It follows that
    \begin{equation*}
        \frac{d}{dt}\psi_x(ta) = 
        \varphi^{\nu_N}_x(a+ta)-\varphi^{\nu_N}_x(ta)
        = \varphi^{\nu_N}_{\alpha(ta)x}(a),
    \end{equation*}
    where we once again used the cocycle property.
    We may write
    \begin{equation*}
        \psi_x(ta) =
        \int_0^t \varphi^{\nu_N}_{\alpha(sa)x}(a)ds.
    \end{equation*}
    The Lemma now follows from \eqref{ineqfora}.
\end{proof}

\section{Decomposition of cocycles over product actions}\label{sec:decomp}
In this section we prove \cref{decompprop}, a cohomology result for products of transitive Anosov flows.
The proof presented relies on Livshitz's Theorem.
We set up some notation first.

For $\alpha \colon \mathbb{R}^k \curvearrowright M$ a $C^s$ action on a manifold $M$ and $0\leq r \leq s$, we define the \emph{first $\mathbb{R}^l$-valued cohomology of $\alpha$} to be the group of $\mathbb{R}^l$ valued $C^r$ cocycles mod $\mathbb{R}^l$ valued $C^r$ coboundaries and we denote it by $H^{1,r}(\alpha,\mathbb{R}^l)$.
For $\varphi,\psi \colon \mathbb{R}^k \times M \to \mathbb{R}^l$ $C^r$ cocycles over $\alpha$, we write $\varphi \approx^r \psi$ to mean they are cohomologous by a $C^r$ coboundary, that is, $\varphi$ and $\psi$ are the same element in $H^{1,r}(\alpha,\mathbb{R}^l)$.

\begin{proposition}
\label{decompprop}
    Let $f_i$ be a transitive Anosov flow on $X_i$ for $i=1,...,k$ and $r \in (0,s]$.
    Then
    \begin{equation*}
        H^{1,r}(f_1\times...\times f_k,\mathbb{R}^l) \cong \bigoplus_{i=1}^k H^{1,r}(f_i,\mathbb{R}^l).
    \end{equation*}
    Moreover, for equivalence classes $H^{1,r}(f_1\times...\times f_k,\mathbb{R}^l) \ni [\varphi] \approx^r [\varphi^1]+...+[\varphi^k] \in \bigoplus_{i=1}^k H^{1,r}(f_i,\mathbb{R}^l)$ and $r \leq t \leq s$,
    $[\varphi]$ has a $C^t$ representative if, and only if, each $[\varphi^i]$ has a $C^t$ representative.
\end{proposition}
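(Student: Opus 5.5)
We construct the isomorphism explicitly in both directions and use the higher rank Livshitz \cref{Livshitz} to show that the map is injective modulo coboundaries.

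\emph{The two maps.} Let $\pi_i\colon M\to X_i$ be the projection and $e_i$ the standard basis vectors. Given $C^r$ cocycles $\varphi^i$ over $f_i$, define
\begin{equation*}
\Phi(\varphi^1,\dots,\varphi^k)_x(t_1,\dots,t_k)=\sum_i \varphi^i_{\pi_i x}(t_i).
\end{equation*}
This is a cocycle over $\alpha_0$, since each summand is a cocycle pulled back along the factor map. It sends coboundaries to coboundaries, because the transfer function $H_i\circ\pi_i$ works, and it preserves $C^t$ regularity. So it descends to a homomorphism $\bigoplus_i H^{1,r}(f_i,\mathbb{R}^l)\to H^{1,r}(\alpha_0,\mathbb{R}^l)$.

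For the inverse, fix periodic points $q_j\in X_j$ (with periods $T_j$) and, given $\varphi$ over $\alpha_0$, restrict it: for $x_i \in X_i$ set
\begin{equation*}
\varphi^i_{x_i}(t)=\varphi_{(q_1,\dots,x_i,\dots,q_k)}(te_i).
\end{equation*}
This is a $C^r$ cocycle over $f_i$, because the slice $\{q_j\}_{j\neq i}\times X_i$ is invariant under the $e_i$ direction.

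\emph{Showing $\varphi\approx^r\Phi(\varphi^1,\dots,\varphi^k)$.} Let $\psi=\varphi-\Phi(\varphi^1,\dots,\varphi^k)$. By \cref{Livshitz}, which applies since $\alpha_0$ is cone transitive, and since $\psi$ is $C^r$, it suffices to show $\mathcal{P}^\psi_p=0$ for every periodic $p=(p_1,\dots,p_k)$. The stabilizer is generated by the vectors $T(p_i)e_i$, so we must show
\begin{equation*}
\varphi_p(T(p_i)e_i)=\varphi^i_{p_i}(T(p_i)).
\end{equation*}
The difference between the two sides is the period of the $e_i$-direction cocycle compared at base points that differ only in coordinates $j\neq i$. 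The key claim is that $x\mapsto\varphi_x(T e_i)$ on the torus fibres is unaffected by moving $x_j$, $j\ne i$. Commutativity gives
\begin{equation*}
\varphi_{\alpha_0(se_j)x}(Te_i)-\varphi_x(Te_i)=\varphi_{\alpha_0(Te_i)x}(se_j)-\varphi_x(se_j).
\end{equation*}
When $x_i$ is $T$-periodic, the right-hand side vanishes. Hence $g(x_j)=\varphi_{(\dots,x_j,\dots)}(T(p_i)e_i)$, with $x_i=p_i$ fixed, is an $f_j$-invariant continuous function of $x_j$ (and likewise in the other coordinates). By transitivity of $f_j$ it is constant. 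Repeating this over all $j\neq i$ moves $p_j$ to $q_j$ without changing the value, which gives the identity. This mixed-direction invariance argument is the main step; the delicate point is that it only works because $x_i$ is periodic, which is exactly the case we need.

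\emph{Injectivity.} Suppose $\Phi(\varphi^1,\dots,\varphi^k)$ is a coboundary. Then all its periodic data vanish, and evaluating at the stabilizer vector $T(p_i)e_i$ shows that each $\varphi^i$ has zero periods. By the rank one Livshitz theorem, \cref{Livshitz} with $k=1$, each $\varphi^i$ is a coboundary. Alternatively, restricting a transfer function $H$ to a slice gives a transfer function for $\varphi^i$ directly, and this route also preserves regularity.

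\emph{Regularity.} If each $[\varphi^i]$ has a $C^t$ representative, then $\Phi$ of those representatives is a $C^t$ representative of $[\varphi]$. Conversely, if $\varphi$ is $C^t$, its slice restrictions $\varphi^i$ are $C^t$. One caveat: a cohomology of class $C^r$ between $C^t$ cocycles must be upgraded to $C^t$. For this we reapply \cref{Livshitz} at regularity $t$, being careful that its regularity loss to $t_*$ is avoided, or else we use the direct slice construction, which needs no Livshitz step.
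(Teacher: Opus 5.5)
Your proposal is correct and follows the paper's proof essentially step for step. You use the same slice-restriction cocycles $\varphi^i$ and the same key lemma: commutativity plus periodicity of the $i$-th coordinate make $x_j \mapsto \varphi_x(Te_i)$ invariant under $f_j$, hence constant by transitivity. You then apply the higher rank Livshitz theorem to match the periodic data, exactly as the paper does.

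Your explicit injectivity argument, restricting a transfer function to a slice, is a useful addition, since the paper only asserts injectivity. That same observation shows slicing is well defined on $H^{1,r}$, so the \emph{moreover} clause needs no upgrade of the coboundary to $C^t$: a $C^t$ representative of a $C^r$ class only has to be a $C^t$ cocycle, so your caveat is unnecessary.
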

\begin{proof}
Denote by $M = X_1 \times...\times X_k$ and $\alpha = f_1\times...\times f_k$.
Given $C^r$ cocycles $\varphi^i$ over $f_i$, we can define a $C^r$ cocycle $\varphi = \varphi^1+...+\varphi^k$ over $\alpha$ by
\begin{equation*}
    \varphi_{(x_1,...,x_k)}(t_1,...,t_k) = 
    \sum_{j=1}^k \varphi^j_{x_j}(t_j).
\end{equation*}
This defines an injective homomorphism $\bigoplus_{i=1}^kH^{1,r}(f_i,\mathbb{R}^l) \to H^{1,r}(f_1\times ... \times f_k,\mathbb{R}^l)$.

We next prove it is surjective.
That is, given an abelian $C^r$ cocycle $\varphi \colon \mathbb{R}^k \times M \to \mathbb{R}^l$, we need to construct $\varphi^i \colon \mathbb{R}\times X_i \to \mathbb{R}^l$ $C^r$ cocycles over $f_i$ such that $\varphi$ is $C^r$ cohomologous to $\varphi^1 +...+\varphi^k$.
    Fix $(\overline{x}_1,...,\overline{x}_k) \in M$.
    For $i \in \{1,...,k\}$, $x_i \in X_i$ and $t_i \in \mathbb{R}$, 
    define
    \begin{equation}
    \label{coordinatecoc}
    \varphi^i_{x_i}(t_i) = 
        \varphi_{(\overline{x}_1,...,\overline{x}_{i-1},x_i,\overline{x}_{i+1},...,\overline{x}_k)}(t_ie_i).
    \end{equation}
    Since $\varphi$ is a $C^r$ cocycle over the product action $\alpha$,
    $\varphi^i$ is a $C^r$ cocycle over $f_i$.

    We will use Livshitz's theorem in \cref{Livshitz} to show that $\varphi$ is cohomologous to $\varphi^1+...+\varphi^k$.
    In order to do that, we need the following
    \begin{Lemma}
    \label{doesnotdepend}

        Under the hypothesis of \cref{decompprop}, let $p_i \in X_i$ be periodic for $f_i$ with period $t_i$.
        The expression
        \begin{equation*}
            \varphi_{(x_1,...,x_{i-1},p_i,x_{i+1},...,x_k)}(t_ie_i)
        \end{equation*}
        does not depend on $(x_1,...,x_{i-1},x_{i+1},...,x_k) \in \ X_1\times ... \times \widehat{X}_i \times ... X_k$.
    \end{Lemma}
\begin{proof}
    Fix $j \in \{1,...,k\}\setminus\{i\}$ and $\hat{x}_u \in X_u$ for $u \in \{1,...,k\}\setminus\{i,j\}$.
    Define $g_j \colon X_j \to \mathbb{R}^l$ by
    \begin{equation*}
        g(x_j) = \varphi_{(\hat{x}_1,...,\hat{x}_{j-1},x_j,\hat{x}_{j+1},...,\hat{x}_{i-1},p_i,\hat{x}_{i+1},...,\hat{x}_k)}(t_ie_i).
    \end{equation*}
    For $s \in \mathbb{R}$, the cocycle property gives
    \begin{align*}
        &\varphi_{(\hat{x}_1,...,x_j,...,p_i,,...,\hat{x}_k)}(t_ie_i+se_j) \\
        =& \varphi_{(\hat{x}_1,...,x_j,...,p_i,...,\hat{x}_k)}(t_ie_i) + \varphi_{(\hat{x}_1,...,x_j,...,f_i^{t_i}(p_i),...,\hat{x}_k)}(se_j)\\
        =& \varphi_{(\hat{x}_1,...,x_j,...,p_i,...,\hat{x}_k)}(se_j) + \varphi_{(\hat{x}_1,...,f_j^{s}(x_j),...,p_i,...,\hat{x}_k)}(t_ie_i),
    \end{align*}
    where ommited coordinates are hat coordinates.
    Since $t_i$ is a period of $p_i$ under $f_i$, the terms $\varphi_{(\hat{x}_1,...,x_j,...,f_i^{t_i}(p_i),...,\hat{x}_k)}(se_j)$ and $\varphi_{(\hat{x}_1,...,x_j,...,p_i,...,\hat{x}_k)}(se_j)$ above cancel out, which implies
    \begin{equation*}
        g(f_j^s(x_j)) = g(x_j).
    \end{equation*}
    By transitivity of $f_j$, $g$ must be constant.
    This proves the Lemma.
\end{proof}
    We use \cref{Livshitz} to prove \cref{decompprop}. 
    Given $(p_1,...,p_k) \in M$ periodic, we need to show that for each $i \in \{1,...,k\}$,
    \begin{equation*}
        \varphi_{(p_1,...,p_k)}(t_ie_i) =
        \varphi^i_{p_i}(t_i).
    \end{equation*}
    This follows from the definition of $\varphi^i$ and \cref{doesnotdepend}.
 \end{proof}

\section{Proof of \cref{thm1}}\label{sec:thm1}
We will denote by $\varphi$ and $\psi$ the $\mathbb{R}^k$-valued cocycles defining the time changes $\alpha_1$ and $\alpha_2$ respectively.
We are interested in $\varphi$ and $\psi$ up to cohomology.
By \cref{decompprop}, we may write
\begin{equation*}
    \varphi \approx^r \varphi^1 + ... + \varphi^k \ \text{and} \ 
    \psi \approx^r \psi^1 + ... + \psi^k,
\end{equation*}
where $\varphi^i,\psi^i \colon \mathbb{R} \times X_i \to \mathbb{R}^k$ are cocycles over $f_i$ and $\approx^r$ means $C^r$ cohomologous.
\begin{Lemma}
\label{plusminusLemma}
    For $i\in \{1,...,k\}$ and $p_i \in \text{Per}(f_i)$,
    either
    $\mathcal{P}^{\varphi^i}_{p_i} = \mathcal{P}^{\psi^i}_{p_i}$
    or $\mathcal{P}^{\varphi^i}_{p_i} = -\mathcal{P}^{\psi^i}_{p_i}$.
\end{Lemma}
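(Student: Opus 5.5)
The idea is to read $\mathcal{P}^{\varphi^i}_{p_i}$ off the stabilizers of product periodic points $p=(p_1,\dots,p_k)$, and then let the coordinates $p_j$, $j\neq i$, vary so that only the $i$-th piece survives, up to sign.

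\emph{Reductions.} Replacing $\alpha_2$ by the constant time change $a\mapsto\alpha_2(C^{-1}a)$ replaces $\psi$ by $C\psi$, so I assume $C=I$, i.e.\ $\text{Stab}_{\alpha_1}(p)=\text{Stab}_{\alpha_2}(p)$ for all $p$. Two facts about period data will be used:
\begin{itemize}
\item cohomologous cocycles have equal period maps, since $\mathcal{P}$ of a coboundary vanishes;
\item by the formula $\mathcal{P}^{\varphi}_p(\text{Stab}_{\alpha_0}(p))=\text{Stab}_{\alpha_1}(p)$, each $\mathcal{P}^{\varphi}_p$ is invertible.
\end{itemize}
Since $\mathcal{P}^{\varphi^i}_{p_i}\colon\R\to\R^k$ is linear, it suffices to compare the two maps at the period $t_i$ of $p_i$.

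\emph{Step 1: stabilizer bases.} Fix $p_i\in\text{Per}(f_i)$ with minimal period $t_i$, and choose periodic points $q_j\in X_j$ with minimal periods $T_j$ for $j\neq i$. Let $p=(q_1,\dots,p_i,\dots,q_k)$. Then $\text{Stab}_{\alpha_0}(p)=\bigoplus_j T_j\mathbb{Z}e_j$, with $T_i=t_i$. Using the decomposition of \cref{decompprop} and the cohomology invariance of $\mathcal{P}$, we get $\mathcal{P}^{\varphi}_p(T_je_j)=\mathcal{P}^{\varphi^j}_{q_j}(T_j)=:v_j(q_j)$. Hence $\text{Stab}_{\alpha_1}(p)$ is the lattice $L_\varphi(q)$ with basis $v_i:=\mathcal{P}^{\varphi^i}_{p_i}(t_i)$ and the $v_j(q_j)$ for $j\neq i$. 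Likewise, $\text{Stab}_{\alpha_2}(p)=L_\psi(q)$ has basis $w_i:=\mathcal{P}^{\psi^i}_{p_i}(t_i)$ and the $w_j(q_j)$. The hypothesis gives $L_\varphi(q)=L_\psi(q)$ for every choice of $q=(q_j)_{j\neq i}$.

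\emph{Step 2: isolate the $i$-th vector.} Set $\Lambda=\bigcap_q L_\varphi(q)=\bigcap_q L_\psi(q)$. This subgroup contains both $\mathbb{Z}v_i$ and $\mathbb{Z}w_i$. I will show that $\Lambda=\mathbb{Z}v_i$; the symmetric argument gives $\Lambda=\mathbb{Z}w_i$. Then $\mathbb{Z}v_i=\mathbb{Z}w_i$, so $w_i=\pm v_i$, which is the Lemma.

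To prove $\Lambda=\mathbb{Z}v_i$, I use that the periods $T_j$ of $f_j$ are unbounded, and that $|\mathcal{P}^{\varphi^j}_{q_j}(1)|$ is bounded below uniformly in $q_j$. The lower bound comes from \eqref{bddnorm} applied to the inverse cocycle $\varphi^{-1}$, since $\mathcal{P}^{\varphi^{-1}}_p=(\mathcal{P}^{\varphi}_p)^{-1}$. Hence $|v_j(q_j)|\to\infty$ along suitable sequences of $q_j$. Write a fixed $x\in\Lambda$ as $x=m v_i+\sum_{j\ne i}n_j(q)\,v_j(q_j)$ with integer coefficients. The goal is to force $n_j(q)=0$ by choosing the $q_j$ so that the vectors $v_j(q_j)$ are long and their directions stay uniformly transverse to $v_i$ and to each other.

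\emph{Main obstacle.} The difficulty is exactly this transversality. The directions of $\mathcal{P}^{\varphi^j}_{q_j}(1)$ could a priori degenerate, collapsing the lattice toward the line $\R v_i$. I would first try to control them using the uniform bound on $\|(\mathcal{P}^{\varphi}_p)^{-1}\|$. This bound implies that the normalized vectors $v_i/t_i$ and $v_j(q_j)/T_j$ ($j\neq i$) are the images of the unit vectors $e_1,\dots,e_k$ under a matrix with uniformly bounded inverse. So the covolume of the lattice spanned by the other directions, measured transversally to $v_i$, grows like $\prod_{j\ne i}T_j$. Any fixed $x$ with $n_j(q)\neq 0$ for some $j$ would then have to be at distance at least comparable to $T_j$ from the line $\R v_i$, which is impossible for large $T_j$. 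Hence $x\in\mathbb{Z}v_i$. If this estimate proves awkward, the fallback is to compare two choices $q,q'$ that differ in one coordinate only, and to use incommensurability of the corresponding periods.
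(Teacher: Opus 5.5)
Your proposal is correct and is essentially the paper's argument: the paper also fixes $p_i$, takes periodic $p_j$ ($j\neq i$) with $l(p_j) > 2S^2 l(p_i)$, uses the uniform bounds on $\mathcal{P}^{\varphi}_p$ and $(\mathcal{P}^{\varphi}_p)^{-1}$ to force the off-$i$ integer coefficients to vanish, and then runs the symmetric argument to get the factor $\pm 1$. Your ``main obstacle'' is resolved exactly as in the paper by pulling back with $(\mathcal{P}^{\varphi}_p)^{-1}$ to the rectangular lattice $\bigoplus_j T_j\mathbb{Z}e_j$: since $\|(\mathcal{P}^{\varphi}_p)^{-1}x\|\le S\|x\|$, this forces $n_j(q)=0$ once $T_j>S\|x\|$, so the covolume heuristic and the incommensurability fallback are unnecessary, and the intersection lattice $\Lambda$ is only a repackaging of the same step.
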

\begin{proof}
    Fix $p_i \in X_i$ be periodic with period $l(p_i)$.
    Let
    \begin{equation*}
        S = \underset{p \in \text{Per}(\alpha_0)}{\sup}
        \max
        \left\{\|\mathcal{P}^{\varphi}_p\|,\|(\mathcal{P}^{\varphi}_p)^{-1}\|^{-1},\|\mathcal{P}^{\psi}_p\|,\|(\mathcal{P}^{\psi}_p)^{-1}\|^{-1}\right\},
    \end{equation*}
    which is finite by \eqref{bddnorm}.
    For $j \in \{1,...,k\}\setminus{i}$, pick $p_j \in X_j$ periodic for $f_j$ with period $l(p_j) > 2S^2|l(p_i)|$.
    Denote by $p = (p_1,...,p_k) \in M$.

    Since $ \mathcal{P}^{\psi}_{p}(l(p_i)e_i) \in \text{Stab}_{\alpha_1}(p)
    =  \mathcal{P}^{\varphi}_{p}(l(p_1)e_1)\mathbb{Z}\oplus...\oplus\mathcal{P}^{\varphi}_{p}(l(p_k)e_k)\mathbb{Z}$,
    we may write
    \begin{equation*}
        \mathcal{P}^{\psi}_{p}(l(p_i)e_i) = 
        \sum_{j} a_j \mathcal{P}^{\varphi}_{p}(l(p_j)e_j), \ a_j \in \mathbb{Z}.
    \end{equation*}
    Therefore,
    \begin{equation*}        ((\mathcal{P}^{\varphi}_p)^{-1}\circ\mathcal{P}^{\psi}_p) (l(p_i)e_i) = \sum_j a_jl(p_j)e_j.
    \end{equation*}
    Note that $\|(\mathcal{P}^{\varphi}_p)^{-1}\circ\mathcal{P}^{\psi}_p) (l(p_i)e_i)\| \leq S^2 |l(p_i)| $.
    Since $l(p_j) > 2S^2l(p_i)$, then $a_j = 0$ for $j\neq i$.
    This means that there exists $a_i \in \mathbb{Z}$ such that $\mathcal{P}^{\psi^i}_{p_i}(l(p_i)) = a_i\mathcal{P}^{\varphi^i}_{p_i}(l(p_i))$.
    Conversely,
    $\mathcal{P}^{\varphi^i}_{p_i}(l(p_i)) = \mathcal{P}^{\psi^i}_{p_i}\left(\frac{l(p_i)}{a_i}\right)$.
    Since $\mathcal{P}^{\varphi^i}_{p_i}(l(p_i)) =\mathcal{P}^{\varphi}_p(l(p_i)e_i) \in \text{Stab}_{\alpha_1}(p) = \text{Stab}_{\alpha_2}(p)$, then
    \begin{equation*}
        (\mathcal{P}^{\psi}_p)^{-1}(\mathcal{P}^{\varphi}_p(l(p_i)e_i)) = \frac{l(p)e_i}{a_i}\in \text{Stab}_{\alpha_0}(p).
    \end{equation*}
    This implies $|a_i| = 1$ and shows the Lemma.    
    \end{proof}
    Fix $\overline{p} = (\overline{p}_1,...,\overline{p}_k) \in M$ periodic.
    In view of the previous Lemma, we may assume after a constant change of coordinates that $\mathcal{P}^{\varphi}_{\overline{p}} = \mathcal{P}^{\psi}_{\overline{p}}$.
    For each $i \in \{1,...,k\}$, let $P_i = \{p_i \in \text{Per}(f_i) \ | \ \mathcal{P}^{\varphi^i}_{p_i} = \mathcal{P}^{\psi^i}_{p_i}\}$
    and $N_i = \{p_i \in \text{Per}(f_i) \ | \ \mathcal{P}^{\varphi^i}_{p_i} = - \mathcal{P}^{\psi^i}_{p_i}\}$.
    By \cref{plusminusLemma}, $P_i \sqcup N_i = \text{Per}(f_i)$. Note that $\overline{p}_i \in P_i$ for each $i$.
    \begin{Lemma}
        $N_i = \emptyset$ for every $i \in \{1,...,k\}$.
    \end{Lemma}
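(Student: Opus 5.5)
The plan is to argue by contradiction, using that the signs in \cref{plusminusLemma} are rigid under shadowing. Fix $i$ and write, for $q\in \text{Per}(f_i)$,
\begin{equation*}
w(q)=\mathcal{P}^{\varphi^i}_{q}(1)=\int \varphi^i_x(1)\,d\mu_q(x)\in\mathbb{R}^k, \qquad w'(q)=\mathcal{P}^{\psi^i}_{q}(1)=\int \psi^i_x(1)\,d\mu_q(x),
\end{equation*}
where $\mu_q$ is the $f_i$-invariant probability measure on the orbit of $q$. By \cref{plusminusLemma}, $w'(q)=\pm w(q)$ for every $q$. Both $w$ and $w'$ are integrals against $\mu_q$ of the fixed continuous functions $g=\varphi^i_{\cdot}(1)$ and $g'=\psi^i_{\cdot}(1)$ on $X_i$. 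Hence they extend to weak$^*$-continuous affine functions of $f_i$-invariant measures.

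\textbf{Step 1: a uniform lower bound.} For any periodic $p=(p_1,\dots,p_k)\in M$, the $i$-th column of $\mathcal{P}^{\varphi}_p$ is $\mathcal{P}^{\varphi}_p(e_i)=\mathcal{P}^{\varphi^i}_{p_i}(1)=w(p_i)$. Here we use that the period maps of cohomologous cocycles agree and that the decomposition is additive. Since $\mathcal{P}^{\varphi}_p$ is invertible and the constant $S$ from the proof of \cref{plusminusLemma} is finite, we get $|w(q)|\geq S^{-1}>0$ for all $q\in\text{Per}(f_i)$.

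\textbf{Step 2: shadowing.} Suppose $q\in N_i$. Recall $\overline{p}_i\in P_i$, so $w'(\overline{p}_i)=w(\overline{p}_i)$ and $w'(q)=-w(q)$. Since $f_i$ is a transitive Anosov flow, it has the specification property, equivalently the closing lemma with orbit gluing. So there are periodic points $z_n\in\text{Per}(f_i)$ whose orbits spend roughly half their period shadowing the orbit of $\overline{p}_i$ and half shadowing that of $q$. Then
\begin{equation*}
\mu_{z_n}\to \nu:=\tfrac12\left(\mu_{\overline{p}_i}+\mu_q\right)
\end{equation*}
weak$^*$; this is Sigmund's density of periodic measures in the simplex of invariant measures. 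Continuity then gives
\begin{equation*}
w(z_n)\to \tfrac12\left(w(\overline{p}_i)+w(q)\right), \qquad w'(z_n)\to \tfrac12\left(w(\overline{p}_i)-w(q)\right).
\end{equation*}

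\textbf{Step 3: contradiction.} Pass to a subsequence on which the sign $w'(z_n)=\epsilon\, w(z_n)$ is constant. If $\epsilon=+1$, the two limits coincide, so $w(q)=0$. If $\epsilon=-1$, they are negatives of each other, so $w(\overline{p}_i)=0$. Either case contradicts Step 1. Hence $N_i=\emptyset$.

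The main obstacle is Step 2. We need a precise statement that periodic orbit measures of a transitive Anosov flow can approximate the convex combination $\tfrac12(\mu_{\overline{p}_i}+\mu_q)$. If we prefer not to cite Sigmund, we can prove it directly with the Anosov closing lemma. By transitivity, take a point whose orbit goes from near $\overline{p}_i$ to near $q$ and back. Concatenate $n$ loops around $\overline{p}_i$, the transition segment, $m$ loops around $q$ with $m\,l(q)\approx n\,l(\overline{p}_i)$, and the return segment. Close this up with the closing lemma. The bounded transition segments and the exponentially small shadowing errors contribute $O(1/n)$ to the Birkhoff averages of the continuous functions $g$ and $g'$. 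This suffices, since only these two averages are used.
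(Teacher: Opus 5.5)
Your argument is correct. It reaches the contradiction by a different mechanism from the paper's.

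The paper works with the whole compact convex set $\varphi^i(1)=\{\int\varphi^i_x(1)\,d\mu : \mu\in\mathcal{M}(f_i)\}$. By Sigmund's density, the closures of the periodic values coming from $P_i$ and from $N_i$ together cover this connected set, so they must meet; the paper uses this step without spelling it out. That yields $p_n\in P_i$ and $q_n\in N_i$ with $w(p_n)-w(q_n)\to 0$, hence $\tfrac12\bigl(w'(p_n)+w'(q_n)\bigr)\to 0$. This contradicts the fact that these midpoints lie in the compact convex set $\psi^i(1)$, which avoids $0$. That last fact again uses density, together with a uniform lower bound on $|w'|$ over periodic orbits.

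You instead fix one orbit of each sign and approximate the single measure $\tfrac12(\mu_{\overline{p}_i}+\mu_q)$ by periodic measures. The sign dichotomy for the approximating orbits then forces $w(q)=0$ or $w(\overline{p}_i)=0$. What your route buys:
\begin{itemize}
\item you need density of periodic measures only at one explicit average of two orbit measures, which, as you note, follows by hand from the closing lemma;
\item you avoid the connectedness step and the claim $0\notin\psi^i(1)$;
\item you do not actually need the uniform bound of Step 1, only $w(q)\neq 0$ and $w(\overline{p}_i)\neq 0$. These are immediate from invertibility of $\mathcal{P}^{\varphi}_p$ at two individual periodic points, so you need not lean on the constant $S$ at all.
\end{itemize}
The paper's version is a soft argument on the rotation set that never builds specific orbits; yours is more local and more elementary.

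Two small corrections.
\begin{itemize}
\item A transitive Anosov flow need not be mixing, so it need not have the specification property. Neither Sigmund's theorem nor your closing-lemma concatenation needs mixing, because the closed orbit's period is allowed to differ slightly from the length of the pseudo-orbit.
\item Your $O(1/n)$ error bound uses H\"older continuity of $g$ and $g'$, which is available since $r>0$. For merely continuous integrands, uniform continuity still gives an $o(1)$ error, and that is all Step 3 requires.
\end{itemize}
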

    \begin{proof}
        Suppose by contradiction that $N_i \neq \emptyset$ and 
        let $p_i \in N_i$.
        By \eqref{Periodmap},
        \begin{equation*}
        \mathcal{P}^{\varphi^i}_{p_i}(1) = \int\varphi^i_{x_i}(1)d\mu_{p_i}(x_i) \ \text{and}
        \ \mathcal{P}^{\psi^i}_{p_i}(1) = \int\psi^i_{x_i}(1)d\mu_{p_i}(x_i),
        \end{equation*}
        where $\mu_{p_i}$ is the periodic invariant measure on the orbit of $p_i$.
        Consider the compact and convex set not containing $0$
        \begin{equation*}
            \varphi^i(1) = \left\{
            \int \varphi^i_{x_i}(1)d\mu(x_i) \ \Big| \ 
            \mu \in \mathcal{M}(f_i)
            \right\}.
        \end{equation*}
        Since periodic measures are dense in $\mathcal{M}(f_i)$ by \cite{Sigmund}, then $\{\mathcal{P}^{\varphi^i}_{p_i}(1) \ | \ p_i \in \text{Per}(f_i)\}$ is dense in $\varphi^i(1)$.
        Using that $P_i \sqcup N_i = \text{Per}(f_i)$,
        there exists sequences $p_{i,n} \in P_i$, $q_{i,n} \in N_i$ such that
        \begin{equation*}
            |\mathcal{P}^{\varphi^i}_{p_{i,n}}(1) - \mathcal{P}^{\varphi^i}_{q_{i,n}}(1)| \rightarrow 0.
        \end{equation*}
        Consequently,
        \begin{equation}
        \label{convex0}
            \frac{|\mathcal{P}^{\psi^i}_{p_{i,n}}(1) + \mathcal{P}^{\psi^i}_{q_{i,n}}(1)|}{2}
            = \frac{|\mathcal{P}^{\varphi^i}_{p_{i,n}}(1) - \mathcal{P}^{\varphi^i}_{q_{i,n}}(1)|}{2} 
            \rightarrow 0.
        \end{equation}
        But
        \begin{equation*}
            \psi^i(1) = \left\{
            \int \psi^i_{x_i}(1)d\mu(x_i) \ \Big| \ 
            \mu \in \mathcal{M}(f_i)
            \right\}
        \end{equation*}
        is compact, convex and does not contain $0$. 
        This contradicts \eqref{convex0} and proves the Lemma.
    \end{proof}
     Therefore, up to a constant time change, $\mathcal{P}^{\varphi^i}_{p_i} = \mathcal{P}^{\psi^i}_{p_i}$, for every $p_i \in \text{Per}(f_i)$ and $i \in \{1,...,k\}$.
     For $p = (p_1,...,p_k) \in \text{Per}(\alpha_0)$ and $(a_1,...,a_k) \in \mathbb{R}^k$,
     $\mathcal{P}^{\varphi}_p(a_1,...,a_k) = \mathcal{P}^{\varphi^1}_{p_1}(a_1)+...+\mathcal{P}^{\varphi^k}_{p_k}(a_k)$
     and 
     $\mathcal{P}^{\psi}_p(a_1,...,a_k) = \mathcal{P}^{\psi^1}_{p_1}(a_1)+...+\mathcal{P}^{\psi^k}_{p_k}(a_k)$.
     We have therefore shown $\mathcal{P}^{\varphi}_p = \mathcal{P}^{\psi}_p$, for every $p \in \text{Per}(\alpha_0)$.
     By \cref{Livshitz}, $\varphi$ is $C^r$ cohomologous to $\psi$.
     By \cref{coboundary}, $\alpha_1$ and $\alpha_2$ are $C^r$ conjugate. This finishes the proof of \cref{thm1}.
\section{Proof of \cref{thm2}}\label{sec:thm2}

Let $\varphi$ be the $C^r$ cocycle defining the time change $\alpha$.
By \cref{decompprop}, we may write
\begin{equation*}
    \varphi \approx^r \varphi^1+...\varphi^k,
\end{equation*}
where $\varphi^i \colon \mathbb{R} \times X_i \to \mathbb{R}^k$ are cocycles over $f_i$ and $\approx^r$ means $C^r$ cohomologous.

Since $\mathbb{R}^k = u_1\mathbb{R}\oplus... \oplus u_k\mathbb{R}$,
let $\pi_j \colon \mathbb{R}^k \to u_j\mathbb{R}$ be the projection given by the decomposition.
We also consider $\mathbb{R}^k$ with a norm given by the decomposition so that $\|\pi_j\| = 1$, for all $j \in \{1,...,k\}$.
The hypothesis implies that for all $p \in \text{Per}(\alpha)$ and $a \in \text{Stab}_{\alpha}(p)$, $\pi_j(a) \in \text{Stab}_{\alpha}(p)$ for all $j \in \{1,...,k\}$.

\begin{Lemma}
\label{RuLemma}
    For $i \in \{1,...,k\}$ and $p_i \in X_i$ periodic,
    $\mathcal{P}^{\varphi^i}_{p_i}(\mathbb{R}) \subset u_1\mathbb{R}\cup...\cup u_k\mathbb{R}$.
\end{Lemma}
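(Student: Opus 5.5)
We mimic the argument of \cref{plusminusLemma}. Fix $i$ and $p_i\in X_i$ periodic with period $l(p_i)$. It suffices to show that the single vector $v:=\mathcal{P}^{\varphi^i}_{p_i}(l(p_i))$ lies on one of the lines $u_j\mathbb{R}$, since $\mathcal{P}^{\varphi^i}_{p_i}$ is linear from $\mathbb{R}$. Let
\begin{equation*}
S=\sup_{p\in \text{Per}(\alpha_0)}\max\{\|\mathcal{P}^{\varphi}_p\|,\|(\mathcal{P}^{\varphi}_p)^{-1}\|\},
\end{equation*}
which is finite. Boundedness of $\|\mathcal{P}^\varphi_p\|$ follows from \eqref{bddnorm}. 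For the inverse we may argue the same way with the inverse cocycle $\varphi^{-1}$ over $\alpha$, which satisfies $\mathcal{P}^{\varphi^{-1}}_p=(\mathcal{P}^{\varphi}_p)^{-1}$. We will use the norm on $\mathbb{R}^k$ adapted to the $u_j$, so that $\|\pi_j\|=1$.

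For $j\neq i$, choose $p_j\in\text{Per}(f_j)$ with period $l(p_j)>2S^2 l(p_i)$; this is possible since transitive Anosov flows have periodic orbits of arbitrarily large period. Set $p=(p_1,\dots,p_k)$. Since $\varphi$ and $\varphi^1+\dots+\varphi^k$ are cohomologous, their period maps agree at $p$. Hence
\begin{equation*}
\text{Stab}_\alpha(p)=\mathcal{P}^\varphi_p(\text{Stab}_{\alpha_0}(p))=\bigoplus_{j}\mathcal{P}^{\varphi^j}_{p_j}(l(p_j))\mathbb{Z},
\end{equation*}
and $v=\mathcal{P}^\varphi_p(l(p_i)e_i)\in\text{Stab}_\alpha(p)$. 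By hypothesis, each projection $\pi_m(v)$ also lies in $\text{Stab}_\alpha(p)$. Therefore we can write
\begin{equation*}
\pi_m(v)=\sum_j a_j\,\mathcal{P}^\varphi_p(l(p_j)e_j), \qquad a_j\in\mathbb{Z}.
\end{equation*}
Applying $(\mathcal{P}^\varphi_p)^{-1}$ gives $\sum_j a_j l(p_j)e_j$, whose norm is at most $S\|\pi_m(v)\|\le S\|v\|\le S^2 l(p_i)$. Since $l(p_j)>2S^2l(p_i)$, we get $a_j=0$ for $j\ne i$. Consequently $\pi_m(v)=a^{(m)}v$ for some integer $a^{(m)}$, for every $m$.

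Now $v=\sum_m\pi_m(v)$ and the $\pi_m(v)$ lie in the independent lines $u_m\mathbb{R}$. From $\pi_m(v)=a^{(m)}v$, any index $m$ with $a^{(m)}\neq0$ gives $v\in u_m\mathbb{R}$. Since $v\ne 0$ (the map $\mathcal{P}^\varphi_p$ is invertible), at least one $a^{(m)}$ is nonzero. Thus $v\in u_m\mathbb{R}$ for some $m$, and hence $\mathcal{P}^{\varphi^i}_{p_i}(\mathbb{R})=v\mathbb{R}\subset u_m\mathbb{R}$.

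The main obstacle is the uniform bound on $\|(\mathcal{P}^\varphi_p)^{-1}\|$. We handle it via the inverse cocycle, using that the time change $\alpha$ itself is locally free on a compact manifold, so that \eqref{bddnorm} applies to $\varphi^{-1}$ as a cocycle over $\alpha$.
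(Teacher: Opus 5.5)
Your proposal is correct and follows the paper's proof essentially step for step. You choose the other coordinates with long periods, note that the projections $\pi_m(v)$ stay in $\text{Stab}_\alpha(p)$, and pull them back by $(\mathcal{P}^\varphi_p)^{-1}$ to force them into the line $\mathcal{P}^\varphi_p(e_i\mathbb{R})$; that line must then coincide with some $u_m\mathbb{R}$. Your additions fill in details the paper only asserts: the uniform bound on $\|(\mathcal{P}^\varphi_p)^{-1}\|$ via the inverse cocycle over $\alpha$ (the paper's definition of $S$ writes $\|(\mathcal{P}^{\varphi}_p)^{-1}\|^{-1}$, apparently a typo), and the explicit final step that some $a^{(m)}\neq 0$.
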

\begin{proof}
    Fix $p_i \in X_i$ be periodic with period $l(p_i)$.
    Let
    \begin{equation*}
        S = \underset{p \in \text{Per}(\alpha_0)}{\sup}
        \max
        \left\{\|\mathcal{P}^{\varphi}_p\|,\|(\mathcal{P}^{\varphi}_p)^{-1}\|^{-1}\right\},
    \end{equation*}
    which is finite by \eqref{bddnorm}.
    For $j \in \{1,...,k\}\setminus{i}$, pick $p_j \in X_j$ periodic for $f_j$ with period $l(p_j) > 2S^2|l(p_i)|$.
    Denote by $p = (p_1,...,p_k) \in M$.
    
    Since $\mathcal{P}^{\varphi}_p(l(p_i)e_i) \in \text{Stab}_{\alpha}(p)$,
    then $\pi_j \mathcal{P}^{\varphi}_{p}(l(p_i)e_i) \in \text{Stab}_{\alpha}(p)$,
    which implies
    $$(\mathcal{P}^{\varphi}_{p})^{-1}\pi_j\mathcal{P}^{\varphi}_p(l(p_i)e_i) \in \text{Stab}_{\alpha_0}(p) =   l(p_1)e_1\mathbb{Z} \oplus ... \oplus l(p_k)e_k\mathbb{Z}, $$ for every $j \in \{1,...,k\}$.
    However,
    \begin{equation*}
        \|(\mathcal{P}^{\varphi}_{p})^{-1}\pi_j\mathcal{P}^{\varphi}_p(l(p_i)e_i)\|
        \leq S^2|l(p_i)|.
    \end{equation*}
    Since $l(p_l)>2S^2|l(p_l)|$ for $l \in \{1,...,k\}\setminus\{i\}$, 
    we must have $(\mathcal{P}^{\varphi}_{p})^{-1}\pi_j\mathcal{P}^{\varphi}_p(l(p_i)e_i) \in e_i\mathbb{R}$.
    This means $\pi_j\mathcal{P}^{\varphi}_p(l(p_i)e_i) \in \mathcal{P}^{\varphi}_p(e_i\mathbb{R}) \cap u_j\mathbb{R}$, for every $j \in \{1,...,k\}$.
    This is only possible if $\mathcal{P}^{\varphi}_p(e_i\mathbb{R}) = u_j\mathbb{R}$ for some $j \in \{1,...,k\}$.
    This shows the Lemma since $\mathcal{P}^{\varphi^i}_{p_i}(\mathbb{R}) = \mathcal{P}^{\varphi}_p(e_i\mathbb{R})$. 
\end{proof}
Fix $\overline{p} = (\overline{p}_1,...,\overline{p}_k) \in M$ periodic.
We relabel the basis $\{u_1,...,u_k\}$ in order to have $\mathcal{P}^{\varphi^i}_{\overline{p}_i}(\mathbb{R}) = u_i\mathbb{R}$ for every $i \in \{1,...,k\}$.
By applying a constant time change, we may assume that $\mathcal{P}^{\varphi^i}_{\overline{p}_i}(1)$ is a positive multiple of $e_i$ for each $i \in \{1,...,k\}$.
\begin{Lemma}
\label{aligned}
    For $i \in \{1,...,k\}$
    and $p_i \in X_i$ periodic,
    $\mathcal{P}^{\varphi^i}_{p_i}(1)$ is a positive multiple of $e_i$.
\end{Lemma}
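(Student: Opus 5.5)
The plan is to run the same convexity argument as in the proof that $N_i=\emptyset$ in \cref{sec:thm1}, now combined with \cref{RuLemma}. Fix $i$. After the relabeling and the constant time change made just before the statement, $\mathcal{P}^{\varphi^i}_{\overline{p}_i}(1)$ is a positive multiple of $e_i$. This constant change of coordinates carries the lines $u_j\mathbb{R}$ to the coordinate lines $e_j\mathbb{R}$, since $\mathcal{P}^{\varphi^j}_{\overline{p}_j}(\mathbb{R})=u_j\mathbb{R}$. So \cref{RuLemma} now says that $\mathcal{P}^{\varphi^i}_{p_i}(1)\in e_1\mathbb{R}\cup\dots\cup e_k\mathbb{R}$ for every $p_i\in\text{Per}(f_i)$.

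Consider the set
\begin{equation*}
K=\left\{\int\varphi^i_{x_i}(1)\,d\mu(x_i)\ \Big|\ \mu\in\mathcal{M}(f_i)\right\}\subset\mathbb{R}^k .
\end{equation*}
It is compact and convex, because $\mathcal{M}(f_i)$ is weak$^*$ compact and convex and $\mu\mapsto\int\varphi^i_{x_i}(1)\,d\mu$ is continuous and affine. By \cite{Sigmund}, periodic measures are dense in $\mathcal{M}(f_i)$, so the set of periodic values $\{\mathcal{P}^{\varphi^i}_{p_i}(1)\mid p_i\in\text{Per}(f_i)\}$ is dense in $K$. These values lie in the closed set $e_1\mathbb{R}\cup\dots\cup e_k\mathbb{R}$, and therefore $K\subset e_1\mathbb{R}\cup\dots\cup e_k\mathbb{R}$.

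Next I check that $0\notin K$, as was used for $\varphi^i(1)$ in \cref{sec:thm1}. For periodic $p=(p_1,\dots,p_k)$ we have $\mathcal{P}^{\varphi^i}_{p_i}(1)=\mathcal{P}^{\varphi}_p(e_i)$. Since the time change is invertible, $\mathcal{P}^{\varphi}_p$ is an isomorphism carrying $\text{Stab}_{\alpha_0}(p)$ onto $\text{Stab}_{\alpha}(p)$. Using the constant $S$ from the proof of \cref{RuLemma}, we get $|\mathcal{P}^{\varphi^i}_{p_i}(1)|\geq S^{-1}>0$ uniformly in $p_i$. By density of the periodic values, $|v|\geq S^{-1}$ for all $v\in K$, so $0\notin K$.

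This lower bound is the step I expect to need the most care: it depends on the uniform invertibility bound $\inf_p\|(\mathcal{P}^{\varphi}_p)^{-1}\|^{-1}>0$, which the paper treats as part of the finiteness of $S$. If that bound needs more justification, I would fall back on \cref{softmaneLemma}. Applied to each coordinate of $\varphi$ along $e_i$, together with invertibility of $\varphi_x$, it shows that no invariant measure can give zero average.

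To conclude, $K$ is convex, hence connected, and it is contained in $\left(e_1\mathbb{R}\cup\dots\cup e_k\mathbb{R}\right)\setminus\{0\}$. That set is the disjoint union of $2k$ relatively open rays, so $K$ lies in a single ray. Since $K$ contains $\mathcal{P}^{\varphi^i}_{\overline{p}_i}(1)$, which lies on the open ray $\{te_i : t>0\}$, the whole of $K$ lies in that ray. In particular $\mathcal{P}^{\varphi^i}_{p_i}(1)$ is a positive multiple of $e_i$ for every periodic $p_i$.
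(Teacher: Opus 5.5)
Your proof is correct. Its core matches the paper's: the compact convex set $K$ (the paper's $\varphi^i(1)$), density of periodic measures from \cite{Sigmund}, and $0\notin K$. The difference is in how the axis $e_i\mathbb{R}$ is identified.

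The paper first places each value on $e_i\mathbb{R}$ pointwise, at the mixed periodic point $p=(\overline{p}_1,\dots,\overline{p}_{i-1},p_i,\overline{p}_{i+1},\dots,\overline{p}_k)$. There the columns $\mathcal{P}^{\varphi}_p(e_j)=\mathcal{P}^{\varphi^j}_{\overline{p}_j}(1)$, $j\neq i$, already lie on $e_j\mathbb{R}$ by the normalization. Linear independence together with \cref{RuLemma} then forces $\mathcal{P}^{\varphi}_p(e_i)$ onto $e_i\mathbb{R}$, and convexity of $K$ is used only to fix the sign.

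You skip this count and let connectedness of $K$ inside the punctured union of axes settle the axis and the sign at once, which is slightly more economical. The paper's route has the small advantage that the axis comes from linear algebra at a single periodic point, with the measure-theoretic input reserved for the sign.

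You also justify $0\notin K$, which the paper asserts without comment. The uniform bound you flag does hold. The bound $\sup_p\|(\mathcal{P}^{\varphi}_p)^{-1}\|<\infty$ follows from \eqref{bddnorm} applied to the inverse cocycle $\varphi^{-1}$. That is a continuous cocycle over $\alpha$ with $\mathcal{P}^{\varphi^{-1}}_p=(\mathcal{P}^{\varphi}_p)^{-1}$. So the fallback through \cref{softmaneLemma} is unnecessary.
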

\begin{proof}
    Let $p = (\overline{p}_1,...,\overline{p}_{i-1},p_i,\overline{p}_{i+1},...,\overline{p}_k)$.
    For $j \in \{1,...,k\}\setminus \{i\}$,
    $\mathcal{P}^{\varphi}_{p}(e_j) = \mathcal{P}^{\varphi^j}_{\overline{p}_j}(1)$ is a positive multiple of $e_j$.
    But the set $\{\mathcal{P}^{\varphi}_p(e_1),...,\mathcal{P}^{\varphi}_p(e_k)\}$ is linearly independent and contained in $e_1\mathbb{R}\cup ... \cup e_k\mathbb{R}$ by \cref{RuLemma}.
    Therefore, $\mathcal{P}^{\varphi^i}_{p_i}(1) =\mathcal{P}^{\varphi}_p(e_i)   \in e_i\mathbb{R}$.
    By \eqref{Periodmap},
        \begin{equation*}
        \mathcal{P}^{\varphi^i}_{p_i}(1) = \int\varphi^i_{x_i}(1)d\mu_{p_i}(x_i),
        \end{equation*}
        where $\mu_{p_i}$ is the periodic invariant measure on the orbit of $p_i$.
        Consider the compact and convex set not containing $0$
        \begin{equation*}
            \varphi^i(1) = \left\{
            \int \varphi^i_{x_i}(1)d\mu(x_i) \ \Big| \ 
            \mu \in \mathcal{M}(f_i)
            \right\}.
        \end{equation*}
        Since periodic measures are dense in $\mathcal{M}(f_i)$ by \cite{Sigmund}, then $\{\mathcal{P}^{\varphi^i}_{p_i}(1) \ | \ p_i \in \text{Per}(f_i)\}$ is dense in $\varphi^i(1)$.
        But $\{\mathcal{P}^{\varphi^i}_{p_i}(1) \ | \ p_i \in \text{Per}(f_i)\}\subset e_i\mathbb{R}$.
        Therefore, $\varphi^i(1) \subset e_i\mathbb{R}\setminus\{0\}$.
        Since $\mathcal{P}^{\varphi^i}_{\overline{p}_i}(1)$ is a positive multiple of $e_i$, then so is $\mathcal{P}^{\varphi^i}_{p_i}(1)$ for any $p_i \in X_i$ periodic, proving the Lemma.
\end{proof}

For $i \in \{1,...,k\}$, $x_i \in X_i$ and $t_i \in \mathbb{R}$,
consider the $\mathbb{R}$-valued $C^r$ cocycle over $f_i$ given by $\psi^i_{x_i}(t_i) = \pi_i\left(\varphi^i_{x_i}(t_i)\right)$,
where $\pi_i \colon \mathbb{R}^k \to \mathbb{R}$ is the projection on the $i$-th coordinate.
\cref{aligned} implies $\mathcal{P}^{\psi^i}_{p_i}(1)e_i = \mathcal{P}^{\varphi^i}_{p_i}(1)$, for all $p_i \in X_i$ periodic.
Compactness of the set $\varphi^i(1)$ that showed up in the proof of \cref{aligned} implies
\begin{equation*}
    \underline{\psi^i}(1) = \underset{p_i \in \text{Per}(f_i)}{\inf}\psi^i_{p_i}(1) > 0,
\end{equation*}
where we are using the notation introduced in \cref{manesection} and the density of periodic measures for transitive Anosov flows from \cite{Sigmund} in order to consider only periodic measures.
By \cref{softmaneLemma}, there exists a $C^r$ cocycle $\gamma^i$ cohomologous to $\psi^i$ such that $\gamma^i_{x_i}(t_i) > \frac{1}{2}\underline{\psi^i}(1)t_i$.
In particular, $\gamma^i$ is an invertible cocycle, which defines a $C^r$ time change $g_i$ of $f_i$ by \cref{invertiblecocycle}.
Then $\beta = g_1\times...\times g_k$ is a $C^r$ time change of $\alpha_0$ given by the cocycle $\gamma = \sum_i \gamma^ie_i$.
Moreover, for $p=(p_1,...,p_k) \in M$ periodic and $i \in \{1,...,k\}$,
\begin{equation*}
    \mathcal{P}^{\varphi}_p(e_i) = \mathcal{P}^{\psi^i}_{p_i}(1)e_i
    =\mathcal{P}^{\gamma^i}_{p_i}(1)e_i = \mathcal{P}^{\gamma}_p(e_i).
\end{equation*}
By \cref{Livshitz}, $\varphi$ is $C^r$ cohomologous to $\gamma$.
Now \cref{thm2} follows from \cref{coboundary}.

\section{Applications to Cartan actions}
\label{sec:classifyAnosov}

\subsection{Derivative cocycle for Cartan actions}

Let $\alpha \colon \mathbb{R}^k \curvearrowright M$ be a $C^{r}$ Anosov action with $r \geq 1$.
For $\{a_1,...,a_n\} \subset \mathbb{R}^k$ a collection of Anosov elements, we define their common stable manifold at $x \in M$, denoted as $W^s_{a_1,...,a_n}(x)$, to be the connected component at $x$ of $ \cap_{i=1}^nW^s_{a_i}(x)$.
We define a \emph{coarse Lyapunov foliation} to be a common stable foliation of smallest dimension.
Let $\{\mathcal{W}_i\}_{i \in I}$ be the collection of coarse Lyapunov foliations.
Each $\mathcal{W}_i$ is a H\"older foliation with $C^r$ leaves.
Moreover, denoting the associated coarse Lyapunov distribution as $E_i = T\mathcal{W}_i$, the tangent bundle of $M$ admits the $D\alpha$-invariant splitting $TM = T\mathcal{O} \oplus \bigoplus_i E_i$.
A proof of these facts can be found in Lemma 4.5 and Corollary 4.6 of \cite{SpatzierVinhage}.

We say that the action is \emph{Cartan} if all coarse Lyapunov foliations are $1$-dimensional.
For $i \in I$, define the \emph{$i$-th Lyapunov cocycle} to be  $\varphi^i_x(a) = \log \|D_x\alpha(a)\big|_{E_i}\|$.
The Cartan assumption guarantees that $\varphi^i$ is a $\mathbb{R}$-valued cocycle over $\alpha$.

For $\mu$ an ergodic $\alpha$-invariant measure on $M$, there exists a family of functionals $\Delta_{\mu} \subset (\mathbb{R}^k)^*$ called the \emph{Lyapunov functionals} and a measurable $\alpha$-invariant splitting $TM = \bigoplus_{\lambda \in \Delta_\mu}E_{\lambda}$ satisfying for $\lambda \in \Delta_{\mu}$, $a \in \mathbb{R}^k$, $\mu$ almost every $x \in M$ and $v \in E_{\lambda}(x)\setminus\{0\}$,
\begin{equation}
    \label{Lyapunovdef}
    \lim_{n \rightarrow \infty} \frac{1}{n} \log \|D_x\alpha(na)v\| = \lambda(a).
\end{equation}
\begin{Lemma}
    Let $\alpha \colon \mathbb{R}^k \curvearrowright M$ be a $C^r$ Cartan action.
    For $\mu \in \mathcal{M}_{\text{erg}}(\alpha)$,
    $\Delta_{\mu} = \{0\}\cup \{\lambda^i_{\mu}\}_{i \in I}$ with $E_{\lambda^i_{\mu}} = E_i$, for $i \in I$.
    Moreover,
    \begin{equation}
    \label{Lyapint}
        \lambda^i_{\mu}(a) = \int \varphi^i_x(a)d\mu(x).
    \end{equation}
\end{Lemma}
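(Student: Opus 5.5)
We would deduce the statement from the multiplicative ergodic theorem together with the continuous invariant splitting $TM = T\mathcal{O}\oplus\bigoplus_{i\in I}E_i$. Fix $\mu\in\mathcal{M}_{\text{erg}}(\alpha)$. Each summand of the splitting is continuous and $D\alpha$-invariant, and each $E_i$ is one dimensional. The first step is to check that $\varphi^i_x(a)=\log\|D_x\alpha(a)|_{E_i}\|$ is a continuous $\mathbb{R}$-valued cocycle. The chain rule gives $D_x\alpha(a+b)|_{E_i} = D_{\alpha(a)x}\alpha(b)|_{E_i}\circ D_x\alpha(a)|_{E_i}$. On a line the operator norm is multiplicative under composition, so the cocycle identity \eqref{cocycle} holds.

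Next, for $v\in E_i(x)\setminus\{0\}$ we have $\|D_x\alpha(na)v\| = e^{\varphi^i_x(na)}\|v\|$. By the cocycle property, $\varphi^i_x(na)=\sum_{j=0}^{n-1}\varphi^i_{\alpha(ja)x}(a)$. Birkhoff's theorem for the $\mathbb{Z}^k$ action generated by a basis, or simply for the single map $\alpha(a)$, then gives $\mu$-a.e. convergence of $\frac1n\varphi^i_x(na)$ to $\mathbb{E}_\mu[\varphi^i(a)\mid\mathcal{I}_a]$, where $\mathcal{I}_a$ is the $\alpha(a)$-invariant $\sigma$-algebra. The main subtlety is that $\mu$ is ergodic for the $\mathbb{R}^k$ action but need not be ergodic for $\alpha(a)$. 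To handle this, we would use that the limit function $L(x)$ is $\alpha$-invariant. Indeed, $\varphi^i_{\alpha(b)x}(na)-\varphi^i_x(na) = \varphi^i_{\alpha(na)x}(b)-\varphi^i_x(b)$ is bounded by $2\sup|\varphi^i(b)|$ by compactness, so $L(\alpha(b)x)=L(x)$. Ergodicity of $\mu$ under $\alpha$ then makes $L$ a.e. constant, equal to $\int\varphi^i_x(a)\,d\mu$. Linearity of $a\mapsto\int\varphi^i_x(a)\,d\mu$ follows from the cocycle identity and $\alpha$-invariance of $\mu$, as in \eqref{Periodmap}. Hence $\lambda^i_\mu(a):=\int\varphi^i_x(a)\,d\mu(x)$ is a linear functional satisfying \eqref{Lyapunovdef} on $E_i$, which is \eqref{Lyapint}.

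For the orbit direction, $D\alpha(na)$ restricted to $T\mathcal{O}$ is bounded uniformly in $n$. This is because $D\alpha(b)$ maps the generating vector fields $\partial_{e_j}$ to themselves by commutativity, so it acts as the identity in the frame of generators, which is uniformly comparable to the metric. Thus the exponent on $T\mathcal{O}$ is the zero functional.

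Finally, to identify $\Delta_\mu$, we would argue that the Oseledets splitting is the unique measurable invariant splitting realizing the exponents. The continuous splitting $T\mathcal{O}\oplus\bigoplus E_i$ consists of invariant subbundles, each with a single exponent functional. Grouping summands with equal functionals recovers $E_\lambda$ for every $\lambda\in\Delta_\mu$, so $\Delta_\mu=\{0\}\cup\{\lambda^i_\mu\}_{i\in I}$. Moreover $E_{\lambda^i_\mu}=E_i$, after interpreting coinciding functionals by taking sums, or after noting that distinct coarse Lyapunov foliations have non-proportional functionals when the exponents are nonzero. We expect this last identification, which uses uniqueness of the Oseledets decomposition for $\mathbb{R}^k$ actions and the caveat about coinciding functionals, to be the only step needing care. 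The Birkhoff/ergodicity step is routine once $\alpha$-invariance of the limit is noted.
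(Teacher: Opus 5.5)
There is a genuine gap, and it sits exactly at the step you flagged. Your computation of the exponents on the continuous splitting is sound. That includes the cocycle property of $\varphi^i$, the Birkhoff argument with the observation that $\lim_n \frac1n\varphi^i_x(na)$ is $\alpha$-invariant (so ergodicity of the $\mathbb{R}^k$-action suffices), and the zero exponent on $T\mathcal{O}$ via the generating vector fields. It gives \eqref{Lyapint} directly, rather than by dominated convergence as in the paper.

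The trouble is that up to the final step you have only used that $T\mathcal{O}\oplus\bigoplus_i E_i$ is a continuous invariant splitting into lines. For such a general splitting the functionals $\lambda^i_\mu$ can vanish or coincide, and then $E_{\lambda^i_\mu}$ is strictly larger than $E_i$. So ``interpreting coinciding functionals by taking sums'' does not prove the stated lemma. Your other justification is false as stated: distinct coarse Lyapunov bundles can have proportional functionals. In $\alpha_0=f_1\times\cdots\times f_k$, the functionals of $E^u_{f_1}$ and $E^s_{f_1}$ are negative multiples of each other. You also never show that the exponents $\lambda^i_\mu$ are nonzero; you assume it.

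The missing input is the hyperbolicity that defines coarse Lyapunov bundles. The paper brings it in through a finite symmetric set $A=-A$ of Anosov elements.
\begin{itemize}
\item Each $E_i$ lies in $E^s_a$ for some Anosov $a$. The uniform bound $\|D\alpha(na)|_{E_i}\|\le Ce^{-\lambda n}$ then gives $\lambda^i_\mu(a)\le-\lambda<0$, so $\lambda^i_\mu\neq0$ and $E_i$ is separated from $T\mathcal{O}$.
\item For $i\neq j$, being different coarse Lyapunov directions means there is an Anosov $a$ with $E_i\subset E^s_a$ and $E_j\subset E^u_a$. Hence $\lambda^i_\mu(a)<0<\lambda^j_\mu(a)$, so $\lambda^i_\mu\neq\lambda^j_\mu$, which is all you need.
\end{itemize}

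With these two facts, your black-box appeal to uniqueness of the Oseledets splitting becomes elementary. Take $v\in E_\lambda(x)$ and write $v=v_o+\sum_i v_i$. The projections onto the continuous summands are uniformly bounded, so the growth rate of $v$ under $na$ is the maximum of the rates of its nonzero components. Applying the same to $-a$ shows $\lambda(a)$ is also their minimum. Hence every nonzero component has functional $\lambda$, and distinctness leaves exactly one. This is essentially the paper's argument, which runs the same decomposition directly with elements of $A$.
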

\begin{proof}
    Let $\lambda \in \Delta_{\mu}$ and $A \subset \mathbb{R}^k$ a finite collection of Anosov elements that determine all coarse Lyapunov foliations. We will suppose $-A =A$.
    For $x \in M$ generic, let $v \in E_{\lambda}(x)\setminus\{0\}$ and write $v = v_o + \sum_{i \in I} v_i \in T_x\mathcal{O} \oplus \bigoplus_{i \in I}E_i(x)$.

    If $\lambda = 0$, then $v_i = 0$ for $i \in I$.
    Otherwise, we could pick $a \in A$ that exponentially expands some $v_i$.
    This shows $E_0 = T\mathcal{O}$. For $\lambda \neq 0$, we must have $v_o = 0$ by taking $a \in A$ with $\lambda(a)<0$ since $v_o$ isn't contracted by the action.
    Suppose for $i\neq j \in I$, $v_i \neq 0$ and $v_j \neq 0$. 
    Since they are in different coarse Lyapunov directions, there exists $a \in A$ that exponentially contracts $v_i$ and exponentially expands $v_j$.
    This implies $\lambda(a) >0$.
    But $-a$ exponentially expands $v_i$, which implies $\lambda(-a) >0$ and gives a contradiction.
    Therefore, there exists a unique $i \in I$ such that $v_i  \neq 0$.
    This shows that the splitting induced by the Lyapunov functionals refines the coarse Lyapunov splitting.
    Since we are assuming that the action is Cartan, the splittings must be equal and we may label the Lyapunov functionals as stated in the Lemma.

    For the second statement let $\lambda^i_{\mu} \in \Delta_{\mu}\setminus\{0\}$ and $a \in \mathbb{R}^k$.
    By the cocycle property of $\varphi^i$,
    \begin{equation*}
        \int \varphi^i_x(na) d\mu(x)
        = \int \sum_{i=0}^{n-1} \varphi^i_{\alpha(ia)x}(a)d\mu(x) = n\int \varphi^i_x(a) d\mu(x).
    \end{equation*}
    By \eqref{Lyapunovdef},
    \begin{equation*}
        \int\varphi^i_x(a)d\mu(x)
        = \int \frac{1}{n}\log \|D_x\alpha(na)\big|_{E_i}\|d\mu(x)
        \rightarrow \lambda^i_{\mu}(a).
    \end{equation*}
\end{proof}
For a $\alpha$-invariant measure $\mu \in \mathcal{M}(\alpha)$ not necessarily ergodic, we will still denote $\lambda^i_{\mu}$ to be the functional defined by \eqref{Lyapint}.
Note that it is no longer the classical notion of the Lyapunov exponent for $\mu$, but rather its integral.
Given $i \in I$, let $H^i = \underset{\mu \in \mathcal{M}(\alpha)}{\bigcup}\text{ker}\lambda^i_{\mu}$.

A crucial step in the proof of \cref{thm2} was an averaging argument for real valued cocycles which was done in \cref{manesection}.
We observe that it can also be used to give a characterization of the set of Anosov elements for Cartan actions in terms of its Lyapunov functionals.

\begin{Lemma}
\label{lyapexpthm}
    Let $M$ be a compact, connected smooth manifold and $\alpha \colon \mathbb{R}^k \curvearrowright M$ be a  $C^s$ Cartan action for $s\geq 1$.
    In the above notation, the set of Anosov elements is $\mathbb{R}^k \setminus \bigcup_{i \in I} H^i$.
    Moreover, $\alpha$ is totally Cartan if, and only if, for each $i \in I$, $\ker \lambda^i_{\mu}$ does not depend on the invariant measure $\mu$. 
\end{Lemma}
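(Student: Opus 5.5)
We prove the two inclusions separately and then deduce the "moreover" part. Throughout we use the coarse Lyapunov splitting $TM = T\mathcal{O}\oplus\bigoplus_{i\in I}E_i$, each $E_i$ one-dimensional, together with the $\mathbb{R}$-valued cocycles $\varphi^i_x(a)=\log\|D_x\alpha(a)|_{E_i}\|$.

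\textbf{Anosov elements avoid every $H^i$.} Let $a$ be Anosov. Each $E_i$, being a common stable direction of some Anosov elements, is either contained in $E^s_a$ or in $E^u_a$. To see this, note that $E^s_a$ and $E^u_a$ are sums of coarse Lyapunov bundles. Indeed, the collection of Anosov elements used to define coarse Lyapunov foliations may be enlarged to include $a$ and $-a$, and then each minimal common stable foliation lies in $W^s_a$ or in $W^s_{-a}$.

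Suppose $E_i\subset E^s_a$. Then $\varphi^i_x(ta)\le \log C-\lambda t$ for all $t>0$. Integrating against any $\mu\in\mathcal{M}(\alpha)$ and using linearity in $t$, which holds by the cocycle property and invariance as in \eqref{Lyapint}, gives $\lambda^i_\mu(a)\le -\lambda<0$. Symmetrically, if $E_i\subset E^u_a$ we get $\lambda^i_\mu(a)>0$. Hence $a\notin\ker\lambda^i_\mu$ for all $\mu$, so $a\notin H^i$.

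\textbf{Elements outside every $H^i$ are Anosov.} Let $a\notin\bigcup_i H^i$. The set $\{\lambda^i_\mu(a):\mu\in\mathcal{M}(\alpha)\}$ is the continuous image of the convex, weak-$*$ compact set $\mathcal{M}(\alpha)$, so it is a compact interval $[\underline{\varphi^i}(a),\overline{\varphi^i}(a)]$ not containing $0$. It therefore has a definite sign $\sigma_i\in\{\pm1\}$ and stays a distance $\delta>0$ from $0$.

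We apply \cref{softmaneLemma} to $\varphi^i$ with $\varepsilon=\delta/2$. This gives a cocycle $\psi^i=\varphi^i+H_i\circ\alpha-H_i$ with $H_i$ continuous, such that $\sigma_i\psi^i_x(ta)<-\tfrac{\delta}{2}t$ for all $t>0$ and $x$, after replacing $a$ by $-a$ if $\sigma_i=1$. Since $H_i$ is bounded,
\[
\|D_x\alpha(ta)|_{E_i}\|^{\sigma_i}\le e^{2\|H_i\|_\infty}e^{-\delta t/2}.
\]
Set $E^s_a=\bigoplus_{\sigma_i=-1}E_i$ and $E^u_a=\bigoplus_{\sigma_i=1}E_i$. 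These bundles are $D\alpha$-invariant and satisfy the uniform exponential estimates. Norms on a direct sum of continuous invariant bundles are comparable to the sum of the component norms, and the angles between the $E_i$ are bounded below by continuity and compactness.

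It remains to check that $E^s_a$ and $E^u_a$ are nontrivial. Since $\alpha$ is Anosov, there is an Anosov element $b$. Its stable bundle consists of the $E_i$ on which $\lambda^i_\mu(b)<0$, and this set is nonempty. For the given $a$, however, nontriviality of both bundles is not automatic. The main obstacle is thus to show that not all signs $\sigma_i$ coincide.

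The approach is as follows. If $\sigma_i=-1$ for all $i$, then $\alpha(ta)$ contracts every transverse direction uniformly while preserving the orbit direction. Then $\alpha(ta)$ contracts the transverse volume. One option is to compare with an invariant measure to derive a contradiction. The more direct option is this: the local transversal to the orbit through $x$ is mapped into itself by $\alpha(ta)$ modulo the orbit, so every orbit is locally attracting. On a compact connected $M$ this forces a single compact orbit $M=\mathcal{O}$. That contradicts the existence of a nontrivial $E^s_b$ for the Anosov element $b$. If this step is troublesome, the fallback is to phrase the definition of Anosov element allowing one of the bundles to be trivial, since the argument above gives exactly a partially trivial splitting.

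\textbf{The ``moreover'' part.} Assume that for each $i$, $\ker\lambda^i_\mu=:L_i$ is independent of $\mu$. Then each $H^i=L_i$ is a hyperplane (or $\lambda^i_\mu\equiv0$, which is impossible by the first part). Hence the complement of $\bigcup_i H^i$ is open and dense, and the action is totally Anosov.

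Conversely, suppose $\ker\lambda^i_\mu\ne\ker\lambda^i_\nu$ for some $\mu,\nu$. Consider the family $\lambda^i_{s\mu+(1-s)\nu}=s\lambda^i_\mu+(1-s)\lambda^i_\nu$ for $s\in[0,1]$. Its kernels sweep out a set with nonempty interior, namely the region between the two hyperplanes on which $\lambda^i_\mu$ and $\lambda^i_\nu$ have opposite signs. That open region consists of non-Anosov elements, so the Anosov elements are not dense.
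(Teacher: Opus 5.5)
Your argument for the characterization of Anosov elements is essentially the paper's. For the hard inclusion you apply \cref{softmaneLemma} to each $\varphi^i$ exactly as the paper does. Absorbing the bounded transfer function into the constant is equivalent to the paper's adapted norm $e^{H(x)}\|v\|$. For the easy inclusion you integrate the uniform Anosov bound on $E_i$ against an arbitrary invariant measure. The paper instead passes to the ergodic decomposition and rules out exponents of opposite sign via continuity of $E_i$ and connectedness of $M$. Both versions rest on the fact that each $E_i$ lies entirely in $E^s_a$ or in $E^u_a$. You should quote this from the coarse Lyapunov structure theory, since minimality alone does not exclude a line meeting both $E^s_a$ and $E^u_a$ trivially. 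Your convex-combination proof of the ``moreover'' part, reading ``totally Cartan'' as ``totally Anosov'', is correct and goes beyond the paper, which does not write it out. The signs in your display are muddled: for $\sigma_i=-1$ it asserts expansion. What you want is $\|D_x\alpha(ta)|_{E_i}\|\le e^{2\|H_i\|_\infty}e^{-\delta t/2}$ when $\sigma_i=-1$. When $\sigma_i=1$ you want the same bound for $\alpha(-ta)$, obtained by applying \cref{softmaneLemma} at $-a$.

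The step that is genuinely missing is the nontriviality of $E^s_a$ and $E^u_a$. It is missing both from your proposal and from the paper's proof, which passes over it silently. You assert without argument that uniform transverse contraction forces $M$ to be a single compact orbit. Comparison with an invariant measure does not suffice by itself, since an invariant measure may live on an attracting set. Your fallback changes the notion being characterized. A short volume argument closes the gap. Suppose $\|D_x\alpha(ta)|_{E_i}\|\le Ce^{-ct}$ for all $i\in I$, $t>0$ and $x\in M$. $D\alpha(b)$ carries the generating vector fields of the action to themselves, and the summands of $TM=T\mathcal{O}\oplus\bigoplus_i E_i$ are continuous with angles bounded below. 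Hence $|\det D_x\alpha(ta)|\le C'e^{-c|I|t}$ uniformly in $x$. This contradicts $\int_M|\det D_x\alpha(ta)|\,d\mathrm{vol}(x)=\mathrm{vol}(M)$ for large $t$, because $I\neq\emptyset$: any Anosov element has a nontrivial stable bundle. The same argument applied to $-a$ rules out every $E_i$ being expanded.
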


A statement similar to \cref{lyapexpthm} for rank $1$ actions has already appeared as one of the main results in \cite{Cao} and as Corollary 3.6 of \cite{RHJMD}.
\cref{lyapexpthm} is instead stated for higher rank actions.
Additionally, the Cartan assumption allows for a full characterization of the set of Anosov elements in terms of the Lyapunov functionals.

\begin{proof}
Let $a \in \bigcup_i H^i$, that is, $a \in \text{ker} \lambda^i_{\mu}$ for some $\mu \in \mathcal{M}(\alpha)$ and $i \in I$.
Taking the ergodic decomposition of $\mu$,
\begin{equation*}
    \int\int\varphi^i_x(a)d\nu_y(x)d\mu(y) = 0.
\end{equation*}
This implies either that there exists an ergodic measure $\nu$ satisfying $\int \varphi^i_x(a)d\nu(x) =\lambda^i_{\nu}(a) = 0$ or there exists ergodic measures $\nu_1,\nu_2$ such that $\int \varphi^i_x(a)d\nu_1(x) = \lambda^i_{\nu_1}(a) <0$ and $\int \varphi^i_{x}(a) d\nu_2(x) = \lambda^i_{\nu_2}(a) > 0$.
In the first case, we see that $a$ is not Anosov since for $x \in M$ $\nu$, $E_i(x)$ is not exponentially contracted or expanded.
For the second case, $a$ is also not Anosov.
To see this, take $x \in M$ $\nu_1$ generic and $y \in M$ $\nu_2$ generic. Then $E_i(x)$ must exponentially contract, while $E_i(y)$ exponentially expands, which is impossible for an Anosov element by continuity of $E_i$ and connectedness of $M$.

Now let $a \in \mathbb{R}^k \setminus \bigcup_i H^i$.
For $i \in I$ and $\mu' \in \mathcal{M}(\alpha)$, suppose without loss of generality that $\lambda^i_{\mu'}(a)>0$.
We first claim that $\underline{\varphi^i}(a) = \underset{\mu \in \mathcal{M}(\alpha)}{\inf}\lambda^i_{\mu}(a)>0$.
Otherwise, by continuity of $\mu \mapsto \lambda^i_{\mu}(a)$ and convexity of $\mathcal{M}(\alpha)$, there would exist $\mu \in \mathcal{M}(\alpha)$ with $\lambda^i_{\mu}(a) = 0$.
By \cref{softmaneLemma}, there exists $\varepsilon_i>0$ and $\psi^i$ cocycle cohomologous to $\varphi^i$ satisfying $\psi^i_x(ta) >\varepsilon_i t$, for all $t >0$ and $x \in M$.
Let $H \colon M \to \mathbb{R}$ be such that
\begin{equation*}
    \varphi^i_x(b) =\psi^i_x(b)+H(\alpha(b)x)-H(x).
\end{equation*}
Let $\|\cdot\|$ be the standard metric on $M$ Define a new metric $|\cdot|_i$ on $E_i(x)$ by $|v|_i = e^{H(x)}\|v\|$.
For $x \in M$, $t>0$ and $v \in E_i(x)$,
\begin{equation*}
    \left|D_x\alpha(ta)\left(\frac{v}{|v|_i}\right)\right|_i
    = e^{H(\alpha(ta)x)-H(x)}\left\|D_x\alpha(ta)\left(\frac{v}{\|v\|}\right)\right\|
    = e^{\psi^i_x(ta)} > e^{\varepsilon_it}.
\end{equation*}
This shows that $E_i$ uniformly expands under $ta$.
Repeating the construction for each $i \in I$ shows that $a$ is an Anosov element and finishes the proof.
\end{proof}

\subsection{$C^1$ time changes of Cartan actions}

Let $\alpha\colon\R^k\curvearrowright M$ be a $C^s$ Cartan action for $s\geq1$. Let $TM=T\mathcal{O}\oplus(\bigoplus_{i\in I}E_i^\alpha)$ be the decomposition into coarse Lyapunov subbundles. 
Recall that, for a periodic point $p$, the $i$-th Lyapunov functional $\lambda^{i,\alpha}_p$ of the action $\alpha$ is defined by $\lambda^{i,\alpha}_p(a)=\int\operatorname{log}\|D_x\alpha(a)|_{E^\alpha_i}\|d\mu_p(x)$, where $\mu_p$ is the ergodic invariant probability measure supported on the periodic orbit of $p$. In this subsection, $\mathcal{P}^\varphi_p\colon\R^k\rightarrow\R^k$ is the linear map extending $\varphi_p\colon\operatorname{Stab}_\beta(p)\rightarrow\R^k$.

First, we observe how the invariant splitting changes at each periodic point under the $C^1$ time change action. 
\begin{Lemma}
\label{changeofcoord}
    Let $\beta$ be a $C^1$ time change of $\alpha$ via an invertible $C^1$ cocycle $\varphi$. 
    For $p\in\operatorname{Per}(\beta)$ and $a\in \operatorname{Stab}_\beta(p)$, we have the $D_p\beta(a)$ invariant splitting $$T_pM=T_p\mathcal{O}\oplus\bigoplus_{i\in I}E^\beta_i(p).$$
\end{Lemma}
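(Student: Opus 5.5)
The idea is to write the derivative of the time-changed action at $p$ in terms of the derivative of $\alpha$, and then to show that the lifted old splitting is invariant. Let $a\in\operatorname{Stab}_\beta(p)$. Since $\beta(a)x=\alpha(\varphi^{-1}_x(a))x$, where $\varphi^{-1}$ is the inverse cocycle, set $b=\varphi^{-1}_p(a)$. We have $\alpha(b)p=p$, so $b\in\operatorname{Stab}_\alpha(p)$. Differentiating $x\mapsto \alpha(\varphi^{-1}_x(a))x$ at $p$ with the chain rule gives
\begin{equation*}
D_p\beta(a)v = D_p\alpha(b)v + \sum_{j=1}^k \bigl(D_p(\varphi^{-1}_{\cdot}(a))_j[v]\bigr)\, X_j(p),
\end{equation*}
where $X_j$ are the generating vector fields of $\alpha$. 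These span $T\mathcal{O}$, which is the same for $\alpha$ and $\beta$. So on the quotient $T_pM/T_p\mathcal{O}$ the maps $D_p\beta(a)$ and $D_p\alpha(b)$ agree, and both preserve $T_p\mathcal{O}$.

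The plan is therefore the following.

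\textbf{Step 1.} Note that $D_p\alpha(b)$ preserves each $E^\alpha_i(p)$ and acts on $T_p\mathcal{O}$ as the identity, because $\alpha$ is abelian and $b$ stabilizes $p$. The map $D_p\beta(a)$ has the block form
\begin{equation*}
\begin{pmatrix} \mathrm{Id}|_{T_p\mathcal{O}} & L \\ 0 & A\end{pmatrix}.
\end{equation*}
Here $A=\bigoplus_i D_p\alpha(b)|_{E^\alpha_i}$ is diagonal with respect to the coarse splitting, and $L\colon\bigoplus_i E^\alpha_i(p)\to T_p\mathcal{O}$ comes from the derivative of the time change. That $\beta$ acts trivially on $T_p\mathcal{O}$ follows since $\beta(a)\beta(c)p=\beta(c)p$ for all $c$.

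\textbf{Step 2.} Look for the invariant subspace $E^\beta_i(p)$ as a graph $\{v+\sigma_i v: v\in E^\alpha_i(p)\}$ with $\sigma_i\colon E^\alpha_i(p)\to T_p\mathcal{O}$. Invariance reduces to the linear equation
\begin{equation*}
\sigma_i A_i - \sigma_i = L_i,
\end{equation*}
where $A_i=D_p\alpha(b)|_{E^\alpha_i}$ is a scalar $\mu_i$, since the action is Cartan (or, in general, a linear map). This is solvable, $\sigma_i=L_i(A_i-\mathrm{Id})^{-1}$, as soon as $1$ is not an eigenvalue of $A_i$.

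\textbf{Step 3, the main obstacle.} Uniqueness and independence of $a$. The invertibility of $A_i - \mathrm{Id}$ may fail for a particular $a$, for instance when $b$ lies in the kernel of the Lyapunov functional. To handle this, first solve for a generic (Anosov-type) $a_0\in\operatorname{Stab}_\beta(p)$ for which all $\mu_i\neq 1$. Such an $a_0$ exists because the Lyapunov functionals $\lambda^i$ restricted to the lattice $\operatorname{Stab}_\alpha(p)$ are nonzero: $\alpha$ is Anosov, so each $E_i$ is contracted by some element, and the periodic measure's functional is nonzero. With this $a_0$, $E^\beta_i(p)$ is determined uniquely as the generalized eigenspace complement.

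Then use commutativity. Since $\operatorname{Stab}_\beta(p)$ is abelian, every $D_p\beta(a)$ commutes with $D_p\beta(a_0)$. It therefore preserves the eigenspaces of $D_p\beta(a_0)$ for the eigenvalue $\mu_i(a_0)$, grouping $i$'s with equal eigenvalue if needed. Choose $a_0$ so that the $\mu_i(a_0)$ are pairwise distinct and $\neq1$; this is possible by avoiding finitely many proper sublattices or hyperplanes, which works when the coarse functionals are distinct up to positive scaling. With this choice the eigenspaces are exactly the $E^\beta_i(p)$ together with $T_p\mathcal{O}$, and these are invariant under all $D_p\beta(a)$. This yields the claimed splitting $T_pM=T_p\mathcal{O}\oplus\bigoplus_i E^\beta_i(p)$.
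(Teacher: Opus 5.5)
Your proof is correct, and it gets invariance by a different mechanism than the paper.

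\textbf{The paper's route.} The paper works one coarse direction at a time. For each $i$ it picks $a_i\in\operatorname{Stab}_\beta(p)$ whose $\alpha$-counterpart contracts $E^\alpha_i(p)$. It defines $\sigma^i$ by the Neumann series $\sum_{n\ge0}(A(a_i)-B(a_i))A(a_i)^n$, which coincides with your $L_i(\mu_i-1)^{-1}$. It then verifies invariance under every $b\in\operatorname{Stab}_\beta(p)$ by hand, using the identity $C(a+b)=C(a)\mu_i(b)+C(b)=C(b)\mu_i(a)+C(a)$. That identity is just the algebraic shadow of $B(a)B(b)=B(b)B(a)$.

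\textbf{Your route.} You replace this computation with the abstract fact that an operator commuting with $D_p\beta(a_0)$ preserves its eigenspaces. This is shorter and insensitive to the sign of the scalar $\mu_i$, which can be negative. It also shows that $E^\beta_i(p)$ is uniquely determined as an eigenline. The paper's version, in turn, gives an explicit formula for $\sigma^i$ and needs only one contracting element per direction, with no comparison between different $i$.

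\textbf{The one loose point.} You need a single $a_0$ whose eigenvalues $\mu_i$ are pairwise distinct and different from $1$. You stated this only conditionally (``works when the coarse functionals are distinct up to positive scaling''). In fact you only need $\lambda^i_p\neq\lambda^j_p$ for $i\neq j$, and this always holds: for $i\neq j$ some Anosov element contracts $E_i$ and expands $E_j$. The paper uses exactly this fact in its lemma identifying the Lyapunov functionals of a Cartan action.

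Since a lattice is not contained in a finite union of proper subspaces, you can then choose $b_0\in\operatorname{Stab}_\alpha(p)$ avoiding the hyperplanes $\ker\lambda^i_p$ and $\ker(\lambda^i_p-\lambda^j_p)$. This gives $|\mu_i|=e^{\lambda^i_p(b_0)}$ pairwise distinct and different from $1$.

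Alternatively, you can drop the distinctness requirement altogether. Restrict to $T_p\mathcal{O}\oplus E^\alpha_i(p)$, which your block form shows is invariant under every $D_p\beta(b)$. On this subspace $D_p\beta(a_i)$ has a one-dimensional $\mu_i$-eigenspace as soon as $\mu_i\neq1$, and commutativity finishes the argument direction by direction, just as the paper does with its separate choice of $a_i$ for each $i$.
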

\begin{proof}
    We define the derivatives of the actions $\alpha$ and $\beta$ with respect to the manifold variable by
    \begin{align*}
         A(a):=D^M_{(\varphi_p(a),p)}\alpha\colon T_pM\rightarrow T_pM, \quad
         B(a):=D^M_{(a,p)}\beta\colon T_pM\rightarrow T_pM.
    \end{align*}
    By the chain rule, for $v\in T_pM,$ $$B(a)v=D^{\R^k}_{(\varphi_p(a),p)}\alpha(D^M_{(a,p)}\varphi(v))+D^M_{(\varphi_p(a),p)}\alpha(v).$$
    Since the derivative with respect to the $\R^k$ parameter $D^{\R^k}_{(\varphi_p(a),p)}\alpha$ takes values in the tangent space to the orbit $T_p\mathcal{O},$ it follows that $$(B(a)-A(a))(T_pM)\subset T_p\mathcal{O}.$$
    For each $i\in I$, choose $a_i\in \operatorname{Stab}_\beta(p)$ such that $\lambda^i_p(\mathcal{P}^\varphi_p(a_i))<0.$ Define 
    \begin{align*}
        \sigma^i\colon E^\alpha_i(p)\rightarrow T_p\mathcal{O}, \quad \quad v\mapsto \sum_{n\geq0}(A(a_i)-B(a_i))(A(a_i))^nv.
    \end{align*}
    This is well defined since $(A(a_i))^nv=e^{\lambda^i_p(\mathcal{P}^\varphi_p(a_i))n}v$ converges exponentially to $0$. Next, define
    \begin{align*}
        E^\beta_i(p)&:=\{v+\sigma^i(v)\colon v\in E^\alpha_i(p)\} =\operatorname{graph}(\sigma^i)\subset E^\alpha_i(p)\oplus T_p\mathcal{O}.
    \end{align*}
    We claim that for every $b\in \operatorname{Stab}_\beta(p)$, $$B(b)(E^\beta_i(p))=E^\beta_i(p).$$ 
    Indeed, for $v\in E^\alpha_i(p),$ $$B(b)(v+\sigma^i(v))=A(b)v+(B(b)-A(b))v+B(b)(\sigma^i(v)).$$ Here, $A(b)v\in E^\alpha_i(p),$ while the remaining terms lie in $T_p\mathcal{O}$, since $$B(b)|_{T_p\mathcal{O}}=\operatorname{id}_{T_p\mathcal{O}}.$$ Thus, the claim follows once we verify that
    $$\sigma^i(A(b)v)=\sigma^i(v)-(A(b)-B(b))v.$$
    For $a\in \operatorname{Stab}_\beta(p),$ define $$C(a):=(A(a)-B(a))|_{E^\alpha_i(p)}.$$ Then
    \begin{align*}
        C(a+b)&=A(a)A(b)-B(a)B(b) \\
        &=(A(a)-B(a))A(b)+B(a)(A(b)-B(b)) \\
        &=C(a)e^{\lambda^i_p(\mathcal{P}^\varphi_p(b))}+C(b) \\
        &=C(b)e^{\lambda^i_p(\mathcal{P}^\varphi_p(a))}+C(a).
    \end{align*}
    The last equality follows from $a+b=b+a.$ Using this notation, we now compare $\sigma^i(A(b)v)$ and $\sigma^i(v)-C(b)v$.
    \begin{align*}
        \sigma^i(A(b)v)&=\sum_{n\geq0}(C(a)e^{\lambda^i_p(\mathcal{P}^\varphi_p(b))}(A(a))^nv \\
        &=\sum_{n\geq0}(C(a+b)-C(b))(A(a))^nv,    \end{align*}
    \begin{align*}
        \sigma^i(v)-C(b)v&=\sum_{n\geq0}(C(a+b)-C(b)e^{\lambda^i_p(\mathcal{P}^\varphi_p(a))})(A(a))^nv-C(b)v \\
        &=\sum_{n\geq0}C(a+b)(A(a))^nv-\sum_{n\geq1}C(b)(A(a))^nv-C(b)v \\
        &=\sum_{n\geq0}(C(a+b)-C(b))(A(a))^nv.
    \end{align*}
    Thus, $\sigma^i(A(b)v)=\sigma^i(v)-(A(b)-B(b))v,$ which completes the proof.
\end{proof}
Next, we describe how the Lyapunov functional changes at each periodic point under $C^1$ time changes. 

\begin{Lemma}
\label{changeofcoordLyap}
    Let $\beta$ be a $C^1$ time change of $\alpha$ via an invertible $C^1$ cocycle $\varphi$. 
    Let $p\in\operatorname{Per}(\alpha)$. Then, for each $i\in I$,  $$\lambda^{i,\beta}_p=\lambda^{i,\alpha}_p\circ\mathcal{P}^\varphi_p.$$
\end{Lemma}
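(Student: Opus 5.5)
Because $p$ is periodic and both sides are linear functionals on $\mathbb{R}^k$, it suffices to check the identity on the lattice $\operatorname{Stab}_\beta(p)$. For $a\in\operatorname{Stab}_\beta(p)$, the point $p$ is fixed by $\beta(a)$, and the integral over $\mu_p$ of the cocycle $x\mapsto \log\|D_x\beta(a)|_{E^\beta_i}\|$ equals its value along the orbit. I will compute the Lyapunov functionals from the derivative at $p$ of the return maps, which is how a periodic orbit's Lyapunov data is usually read off.

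\textbf{Step 1: reduce to eigenvalues at $p$.} For $a$ in the stabilizer, the linear map $B(a)=D_p\beta(a)$ acts on the one-dimensional space $E^\beta_i(p)$ by a scalar $\mu_i(a)$ (using \cref{changeofcoord}). By the cocycle property, $\log|\mu_i(\cdot)|$ is additive on $\operatorname{Stab}_\beta(p)$, and $\lambda^{i,\beta}_p(a)=\log|\mu_i(a)|$. To see this, apply the integral formula \eqref{Lyapint}: the cocycle sum over $n$ returns telescopes, and dividing by $n$ and letting $n\to\infty$ leaves the log of the eigenvalue, since the norm comparison constants disappear in the limit. Similarly, $\varphi_p(a)=\mathcal{P}^\varphi_p(a)\in\operatorname{Stab}_\alpha(p)$, and $A(a)=D_p\alpha(\varphi_p(a))$ acts on $E^\alpha_i(p)$ by $e^{\lambda^{i,\alpha}_p(\mathcal{P}^\varphi_p(a))}$ up to sign. (That identification was already used in the proof of \cref{changeofcoord}.)

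\textbf{Step 2: compare $B(a)$ and $A(a)$ on the graph.} From the proof of \cref{changeofcoord}, $(B(a)-A(a))T_pM\subset T_p\mathcal{O}$, and $B(a)$ is the identity on $T_p\mathcal{O}$. Take $v\in E^\alpha_i(p)$. Then $B(a)(v+\sigma^i(v))$ has $E^\alpha_i$-component exactly $A(a)v$ in the splitting $E^\alpha_i(p)\oplus T_p\mathcal{O}$. Since $E^\beta_i(p)$ is the graph of $\sigma^i$ and is $B(a)$-invariant, we get $B(a)(v+\sigma^i(v))=A(a)v+\sigma^i(A(a)v)$. So the eigenvalue of $B(a)$ on $E^\beta_i(p)$ equals the eigenvalue of $A(a)$ on $E^\alpha_i(p)$. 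Taking logs gives $\lambda^{i,\beta}_p(a)=\lambda^{i,\alpha}_p(\mathcal{P}^\varphi_p(a))$ on the lattice, and linearity extends this to $\mathbb{R}^k$.

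\textbf{Main obstacle.} The delicate point is making sure that the $E^\beta_i$ appearing in the definition of $\lambda^{i,\beta}_p$ (the coarse Lyapunov bundle of $\beta$) coincides at $p$ with the graph bundle built in \cref{changeofcoord}. I would argue this by uniqueness. Pick $a\in\operatorname{Stab}_\beta(p)$ with $\lambda^{i,\alpha}_p(\mathcal{P}^\varphi_p(a))\neq0$ and the other exponents distinct from it; such $a$ exist generically since the functionals are nonzero. Then $B(a)$ has the splitting into eigenspaces with distinct eigenvalues, where $T_p\mathcal{O}$ has eigenvalue $1$. Any $B(a)$-invariant line whose exponent comes from the $i$-th Lyapunov direction must therefore be the graph line. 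I would also check that the eigenvalues on the other lines can be arranged distinct, or else handle coincidences by intersecting over several such $a$. Once that identification is in place, the computation in Step 2 is immediate.
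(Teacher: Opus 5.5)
Your Steps 1--2 are correct and are essentially the paper's proof: the chain rule gives $(B(a)-A(a))T_pM\subset T_p\mathcal{O}$, so projecting the graph line $E^\beta_i(p)$ onto $E^\alpha_i(p)$ along $T_p\mathcal{O}$ conjugates $D_p\beta(a)|_{E^\beta_i(p)}$ to $D_p\alpha(\varphi_p(a))|_{E^\alpha_i(p)}$, and linearity then extends the identity from $\operatorname{Stab}_\beta(p)$ to $\mathbb{R}^k$. The identification you flag as the main obstacle is not needed, because the paper takes $E^\beta_i(p)$ by definition to be the graph bundle produced by \cref{changeofcoord} (indeed $\beta$ need not be Anosov there), so your last paragraph can simply be dropped.
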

\begin{proof}
    Since $\beta(a)p=\alpha(\varphi_p(a))p=p$ for $p\in \operatorname{Per}(\alpha)$ and $a\in\operatorname{Stab}_\beta(p)$, we have $\varphi_p(a)\in\operatorname{Stab}_\alpha(p).$ By \cref{changeofcoord}, there is $D_p\beta(a)$ invariant splitting $$T_pM=T_p\mathcal{O}\oplus\bigoplus_{i\in I}E_i^\beta(p),$$ and each $E^\beta_i(p)$ projects isomorphically onto $E^\alpha_i(p)$ along the orbit direction $T_p\mathcal{O}.$ For each $i\in I,$ there exists a linear map $\sigma^i(p)\colon E^\alpha_i(p)\rightarrow T_p\mathcal{O}$ such that $$E^\beta_i(p)=\{v+\sigma^i(p)v\colon v\in E^\alpha_i(p)\}.$$

    By the chain rule,  
    $$D^M_{(a,p)}\beta(v+\sigma^i(p)v)=D^{\R^k}_{(\varphi_p(a),p)}\alpha(D^M_{(a,p)}\varphi(v+\sigma^i(p)v))+D^M_{(\varphi_p(a),p)}\alpha( v+\sigma^i(p)v).$$

    Decomposing the differential of $\alpha$ into its $\mathbb{R}^k$ and $M$ components, we obtain
    \begin{align*}
    D^{\R^k}_{(\varphi_p(a),p)}\alpha(D^M_{(a,p)}\varphi(v+\sigma^i(p)v)+D^M_{(\varphi_p(a),p)}\alpha(v)+D^M_{(\varphi_p(a),p)}\alpha(\sigma^i(p)v).
    \end{align*}

    The first term $D^{\R^k}_{\varphi((a,x),x)}\alpha(\cdot)$ is tangent to the $\alpha$-orbit since differentiation in the $\R^k$-variable produces a vector in $T_p\mathcal{O}$. The third term is also tangent to the orbit because $\sigma^i(p)v\in T_p\mathcal{O}$ and $T_p\mathcal{O}$ is $D^M_{(\varphi_p(a),p)}\alpha$ invariant. Therefore, modulo $T_p\mathcal{O},$ we have
    $$D^M_{(a,p)}\beta(v+\sigma^i(p)v)=D^M_{(\varphi_p(a),p)}\alpha(v).$$
    
    Thus, under  the natural projection $\pi_i\colon E^\beta_i(p)\rightarrow E^\alpha_i(p)$ along $T_p\mathcal{O},$ $D_p\beta(a)|_{E^\beta_i(p)}$ is conjugate to $D_p\alpha(\varphi_p(a))|_{E^\alpha_i(p)}$. In particular, $$\operatorname{log}\|D_p\beta(a)|_{E^\beta_i(p)}\|=\operatorname{log}\|D_p\alpha(\varphi_p(a))|_{E^\alpha_i(p)}\|.$$ Equivalently, $\lambda^{i,\beta}_p(a)=\lambda^{i,\alpha}_p(\varphi_p(a))$ and it follows $$\lambda^{i,\beta}_p(a)=\lambda^{i,\alpha}_p(\mathcal{P}^\varphi_p(a))$$ since $\varphi_p(a)=\mathcal{P}^\varphi_p(a).$ As both sides are linear on $\operatorname{Stab}_\beta(p),$ we conclude that $$\lambda^{i,\beta}_p=\lambda^{i,\alpha}_p\circ\mathcal{P}^\varphi_p.$$ This completes the proof.
\end{proof}

\subsection{Proof of \cref{thm4}}

We are now ready to prove \cref{thm4}.

\subsubsection*{$(1) \implies (2)$}

Each $g_i$ is Anosov since it is a $C^1$ time change of an Anosov flow.
Therefore, $g_1 \times ... \times g_k$ is totally Anosov.
Since $C^1$ conjugation up to automorphisms preserves the totally Anosov property, $\alpha$ is totally Anosov.

\subsubsection*{$(2) \implies (3)$}

Suppose that $\alpha$ is totally Anosov.
For $i \in I$, we will show that $\ker(\lambda^{i,\alpha}_p)$ does not depend on $p \in \text{Per}(\alpha)$.

Since $\alpha_0$ is the product action of Anosov flows on $3$-dimensional manifolds, then the coarse Lyapunov splitting is given by
\begin{equation*}
    TM = T\mathcal{O}_{\alpha_0} \oplus \bigoplus_{i = 1}^k E^s_{f_i} \oplus \bigoplus_{i=1}^k E^u_{f_i}.
\end{equation*}
We index the coarse Lyapunov distributions by $I = \{\pm 1,...,\pm k\}$, where $i$ corresponds to $E^u_{f_i}$ while $-i$ corresponds to $E^s_{f_i}$.
Moreover, denoting by $\{e_1,...,e_k\}$ the standard basis of $\mathbb{R}^k$,
\begin{equation*}
    \ker(\lambda^{i,\alpha_0}_p) = \text{span}\{e_j \ | \ j \neq |i|\},
\end{equation*}
for any $p \in \text{Per}(\alpha_0)$.

Let $\varphi \colon \mathbb{R}^k \times M \to \mathbb{R}^k$ be the $C^1$ cocycle over $\alpha_0$ defining the action $\alpha$, that is, $\alpha_0(x) = \alpha(\varphi(a,x))x$, for all $a \in \mathbb{R}^k$ and $x \in M$.
By \cref{changeofcoord},
\begin{equation}
\label{eq:ker}
\ker(\lambda_p^{i,\alpha}) = \text{span}\{\mathcal{P}^{\varphi}_p(e_j) \ | j \neq |i| \}, \ \text{for} \ p \in \text{Per}(\alpha).
\end{equation}
 In particular, 
 $\mathcal{P}^{\varphi}_p(e_j)$ is not an Anosov element for $\alpha$, for any $j \in \{1,...,k\}$.

By \cref{decompprop}, we may write $\varphi \approx^1  \varphi^1 + ...+ \varphi^k$, where $\varphi^j$ is a cocycle over the flow $f_j$. Consequently, $\mathcal{P}^{\varphi^j}_{p_j}(1) = \mathcal{P}^{\varphi}_p(e_j)$ is not an Anosov element of $\alpha$ for any $p = (p_1,...,p_k) \in \text{Per}(\alpha_0)$ and $j \in \{1,...,k\}$.

Fix $i \in I$, $j \in \{1,...,k\}\setminus \{|i|\}$
and $\overline{p}_l \in \text{Per}(f_l)$ for $l \notin \{j,|i|\}$.
We claim that $H(p_j) = \ker\left(\lambda^{i,\alpha}_{(\overline{p}_1,...,\overline{p}_{j-1},p_j,\overline{p}_{j+1},...,\overline{p}_k)}\right)$ does not depend on $p_j \in \text{Per}(f_j)$.
Indeed, denoting by $V = \text{span}\{\mathcal{P}^{f_l}_{\overline{p}_l}(1) \ | \ l \in \{1,...,k\} \setminus \{j,|i|\}\}$, we obtain by \cref{eq:ker}
\begin{equation*}
    H(p_j) = \langle \mathcal{P}^{f_j}_{p_j}(1) \rangle \oplus V.
\end{equation*}
If $H$ is not constant, there exists $p_j,q_j \in \text{Per}(f_j)$ such that
\begin{equation*}
    \langle \mathcal{P}^{f_j}_{p_j}(1) \rangle \oplus \langle \mathcal{P}^{f_j}_{q_j}(1)\rangle \oplus V = \mathbb{R}^k.
\end{equation*}
However, since periodic measures for $f_j$ are weak* dense in the set of all invariant measures $\mathcal{M}(f_j)$,
\begin{equation}
\label{eq:line}
    \overline{\{\mathcal{P}^{\varphi^j}_p(1) \ | \ p \in \text{Per}(f_j)\}} =
    \{ \mathcal{P}^{\varphi^j}_{\mu}(1) \ | \ \mu \in \mathcal{M}(f_j)\}.
\end{equation}
Since $\mathcal{M}(f_j)$ is compact and convex and the map $\mu \mapsto \mathcal{P}^{f_j}_{\mu}(1)$ is linear and continuous, the right hand side of \eqref{eq:line} is a compact and convex set of $\mathbb{R}^k$.
In particular, $H(\mathcal{M}(f_j)) = \bigcup_{\mu \in \mathcal{M}(f_j)} \langle \mathcal{P}^{f_j}_{\mu}(1)\rangle \oplus V $ contains  $\bigcup_{u \in [\mathcal{P}^{f_j}_{p_j}(1),\mathcal{P}^{f_j}_{q_j}(1)]} \langle u \rangle \oplus V$ which contains an open set of $\mathbb{R}^k$.
However, $\bigcup_{p \in \text{Per}(f_j)}\langle \mathcal{P}^{f_j}_p(1)\rangle \oplus V$ is a set of non Anosov elements which is dense in $H(\mathcal{M}(f_j))$. This contradicts totally Anosov and proves the claim that $p \mapsto H(p)$ is constant.

Since the coordinate fixed was arbitrary, this shows $\ker(\lambda^{i,\alpha}_p)$ does not depend on $p \in \text{Per}(\alpha)$, which shows $(3)$.

\subsubsection*{$(3)\implies (1)$}

For $i \in I$, let $H^i = \ker \lambda^{i,\alpha}_p$ which does not depend on $p$.
Let $\varphi$ be the cocycle over $\alpha_0$ defining the time change $\alpha$.
By \cref{changeofcoordLyap}, $\lambda^{i,\alpha_0}_p = \lambda^{i,\alpha}_p \circ \mathcal{P}^{\varphi}_p$.
Therefore, 
\begin{equation*}
    \mathcal{P}^{\varphi}_p(\langle e_1,...,\hat{e_i},...,e_k\rangle) = H^i.
\end{equation*}
In particular,
\begin{equation*}
    \mathcal{P}^{\varphi}_p(\langle e_i\rangle) = \bigcap_{j \neq i}H^j = \langle u_i \rangle,
\end{equation*}
for some $u_i \in \mathbb{R}^k$.
Since for all $p \in \text{Per}(\alpha_0)$ and $i \in I$, there exists $a_i(p)\in \mathbb{R}$ such that $\text{Stab}_{\alpha_0}(p) = a_1(p)e_1\mathbb{Z} \oplus ... \oplus a_k(p)e_k \mathbb{Z}$ and $\text{Stab}_{\alpha}(p) = \mathcal{P}^{\varphi}_p(\text{Stab}_{\alpha_0}(p))$,
then there exists $b_i(p) \in \mathbb{R}$ satisfying $\text{Stab}_{\alpha}(p) = b_1(p)u_1\mathbb{Z} \oplus ... \oplus b_k(p)u_k \mathbb{Z}$.
Therefore, the hypothesis for \cref{thm2} are satisfied and we obtain (1).

\subsection{More counterexamples for the Katok Spatzier conjecture}

Here we prove \cref{counterexamples}.
The proof closely follows the main argument in \cite{Vinhage} up until showing the absence of rank $1$ factors.

Let $X,Y$ be compact, connected, Riemannian manifolds of dimension $3$ and $f^s \colon X \to X$ and $g^t \colon Y \to Y$ be transitive Anosov flows.
Denote by $\alpha_0 = f^s \times g^t$.
Let $(p_1,q_1) \neq (p_2,q_2) \in \text{Per}(\alpha_0)$ and for $\delta>0$, define $u_{\delta} \in C^{\infty}(X), v_{\delta} \in C^{\infty}(Y)$ such that
\begin{enumerate}
    \item $u_{\delta}(f^s(p_1)) = \frac{\delta}{l(p_1)}$, $v_{\delta}(g^t(q_1)) = \frac{\delta}{l(q_1)}$, for all $s, t \in \mathbb{R}$;
    \item $u_{\delta}(f^s(p_2)) = v_{\delta}(g^t(q_2)) = 0$, for all $s,t \in \mathbb{R}$;
    \item $|u_{\delta}|,|v_{\delta}| < \max \left\{\frac{\delta}{l(p_1)},\frac{\delta}{l(q_1)}\right\}$.
\end{enumerate}
Consider the $\mathbb{R}^2$ valued cocycle over $\alpha_0$
\begin{equation*}
    \varphi^{\delta}_{(x,y)}(s,t) = \left( s- \int_0^t v_{\delta}(g^{\tau}(y))d\tau, t+ \int_0^s u_{\delta}(f^{\tau}(x))d\tau \right).
\end{equation*}
For $\delta>0$ small enough, $\varphi^{\delta}$ satisfies the conditions of Lemma 2.4 of \cite{Vinhage}.
Therefore, it is an invertible cocycle and determines a $C^1$ time change of $\alpha_0$ which we denote by $\alpha_{\delta}$.

\begin{Lemma}
    For $\delta >0$ enough, $\alpha_{\delta}$ is Cartan, but not totally Cartan.
    In particular, $\alpha_{\delta}$ is not homogeneous.
\end{Lemma}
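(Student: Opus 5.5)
We will argue in three steps: $\alpha_\delta$ is Cartan, it fails to be totally Anosov (hence not totally Cartan), and non-homogeneity follows.

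\emph{Cartan.} Since $\alpha_\delta$ is a $C^1$ time change of $\alpha_0$, the two actions share the orbit foliation. For $\delta$ small, $\varphi^\delta$ is $C^1$-close to the identity cocycle $(s,t)\mapsto(s,t)$. Hence the element $(1,1)$ of $\alpha_\delta$ is a small perturbation of the Anosov element $(1,1)$ of $\alpha_0$, and so is Anosov. The same holds for each of the four elements $(\pm1,\pm1)$. The coarse Lyapunov foliations of $\alpha_\delta$ are obtained by intersecting stable manifolds of these elements. By persistence of dominated splittings, the four one-dimensional bundles $E^{s}_f,E^u_f,E^s_g,E^u_g$ of $\alpha_0$ persist, modulo the orbit direction, as one-dimensional $\alpha_\delta$-invariant bundles. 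So the coarse Lyapunov distributions are one-dimensional and $\alpha_\delta$ is Cartan, with $I=\{\pm1,\pm2\}$ as in the proof of \cref{thm4}.

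\emph{Not totally Cartan.} We compute the periodic data at the two chosen orbits. By construction, $\mathcal{P}^{\varphi^\delta}_{(p_2,q_2)}=\mathrm{Id}$, because $u_\delta$ and $v_\delta$ vanish along the orbits of $p_2$ and $q_2$. At $(p_1,q_1)$ we integrate the cocycle over one period and use linearity:
\begin{equation*}
\mathcal{P}^{\varphi^\delta}_{(p_1,q_1)}(s,t)=\left(s-\tfrac{\delta}{l(q_1)}t,\ t+\tfrac{\delta}{l(p_1)}s\right).
\end{equation*}
Here $\int_0^{l(q_1)}v_\delta=\delta$, so the average per unit time is $\delta/l(q_1)$. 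These two linear maps send the coordinate axes to different lines. By \cref{changeofcoordLyap}, $\lambda^{i,\alpha_\delta}_p=\lambda^{i,\alpha_0}_p\circ(\mathcal{P}^{\varphi^\delta}_p)^{-1}$. By \eqref{eq:ker}, this gives $\ker\lambda^{1,\alpha_\delta}_{(p_2,q_2)}=\langle e_2\rangle$, whereas
\begin{equation*}
\ker\lambda^{1,\alpha_\delta}_{(p_1,q_1)}=\left\langle\left(-\tfrac{\delta}{l(q_1)},1\right)\right\rangle\neq\langle e_2\rangle .
\end{equation*}
So condition (3) of \cref{thm4} fails, and by \cref{thm4} $\alpha_\delta$ is not totally Anosov. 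Equivalently, \cref{lyapexpthm} shows directly that the Anosov set misses an open cone between these kernels, so $\alpha_\delta$ is not totally Cartan.

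\emph{Not homogeneous.} Homogeneous Cartan actions have Lyapunov functionals that are constant in $p$, since they come from roots. In particular, homogeneous Anosov actions are totally Anosov, so $\alpha_\delta$ cannot be homogeneous. Here we rely on the facts about homogeneous actions cited in \cite{Vinhage}.

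The main obstacle is making the Cartan step rigorous for merely $C^1$ time changes: identifying the coarse Lyapunov bundles as the persisted one-dimensional bundles. If needed, we would fall back on \cref{changeofcoord} at periodic points together with density of periodic orbits to pin down the coarse Lyapunov structure.
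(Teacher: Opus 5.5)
Your proposal is correct and follows the paper's own proof. It uses the persistence of the Anosov elements $(\pm1,\pm1)$ for small $\delta$ to get the Cartan property. It then combines the period maps at $(p_1,q_1)$ and $(p_2,q_2)$ with \cref{changeofcoordLyap} and \cref{thm4} to show the kernels of the Lyapunov functionals move, so $\alpha_\delta$ is not totally Anosov. Finally, it uses that homogeneous Cartan actions are totally Anosov.

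Two small remarks. Your period matrix, with off-diagonal entries $\delta/l(p_1)$ and $-\delta/l(q_1)$, is the precise form of the paper's $\begin{pmatrix}1&-\delta\\ \delta&1\end{pmatrix}$, which tacitly normalizes the periods. Also, your claim that $\varphi^\delta$ is $C^1$-close to the identity cocycle needs $u_\delta,v_\delta$ to be chosen $C^1$-small (e.g.\ $\delta$ times fixed functions), since conditions (1)--(3) only control their $C^0$ norms.
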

\begin{proof}
    Just as in the proof of Theorem 5.1 of \cite{Vinhage}, $(\pm1,\pm1)$ are still Anosov elements if $\delta>0$ is small enough with the same dimensions of stable and unstable manifolds.

    A straightforward computation shows
    \begin{equation*}
    \mathcal{P}^{\varphi^{\delta}}_{(p_1,q_1)}=
        \begin{pmatrix}
            1 & -\delta \\
            \delta & 1
        \end{pmatrix} \ \text{and} \ 
        \mathcal{P}^{\varphi^{\delta}}_{(p_2,q_2)} = 
        \begin{pmatrix}
            1 & 0 \\
            0 & 1
        \end{pmatrix}.
    \end{equation*}
    By \cref{changeofcoordLyap}, $\lambda^{i,\alpha_{\delta}}_{(p_1,q_1)}$ is not a multiple of $\lambda^{i,\alpha_{\delta}}_{(p_2,q_2)}$.
    By \cref{thm4}, this implies that $\alpha_{\delta}$ is not totally Cartan.

    For homogeneous actions, the growth of vectors by the derivative is determined by the adjoint representation.
    If $\alpha_{\delta}$ were a Cartan homogeneous action, its one dimensional coarse Lyapunov bundles would be spanned by joint eigenvectors of the $\mathbb{R}^k$ action by the adjoint representation.
    Each bundle would come with a functional and the set of Anosov elements would be the complement of the kernels of such kernels. In particular, $\alpha_{\delta}$ would be totally Anosov, which it is not. This concludes the proof of the Lemma.
\end{proof}

    We now show that, for many $\delta >0$, $\alpha_{\delta}$ has no rank one factors.
    But first, we show a useful Lemma.
    We say that two lattices $\Gamma_1,\Gamma_2 < \mathbb{R}^2$ \emph{share a line} if there exists $L < \mathbb{R}$ one dimensional subspace such that $L \cap \Gamma_1 \neq \{0\} \neq L \cap \Gamma_2$.
    \begin{Lemma}
    \label{stabrot}
        Given two lattices $\Gamma_1,\Gamma_2 < \mathbb{R}^2$, there exists at most countably many $\delta$ such that $A_{\delta}\Gamma_1$ and $\Gamma_2$ share a line, where 
        \begin{equation*}
            A_{\delta}=
        \begin{pmatrix}
            1 & -\delta \\
            \delta & 1
        \end{pmatrix}.
        \end{equation*}
    \end{Lemma}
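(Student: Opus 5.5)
The plan is a countability argument: the set of ``bad'' $\delta$ is a countable union of small sets indexed by pairs of lattice vectors. Sharing a line means there exist nonzero $\gamma_1\in\Gamma_1$ and $\gamma_2\in\Gamma_2$ with $A_\delta\gamma_1$ and $\gamma_2$ linearly dependent, i.e.
\begin{equation*}
    \det\bigl(A_\delta\gamma_1,\gamma_2\bigr)=0.
\end{equation*}
Since $\Gamma_1\setminus\{0\}$ and $\Gamma_2\setminus\{0\}$ are countable, the set of $\delta$ for which $A_\delta\Gamma_1$ and $\Gamma_2$ share a line is contained in
\begin{equation*}
    \bigcup_{\gamma_1,\gamma_2} Z(\gamma_1,\gamma_2), \qquad Z(\gamma_1,\gamma_2)=\{\delta\in\mathbb{R} \ | \ \det(A_\delta\gamma_1,\gamma_2)=0\}.
\end{equation*}
It therefore suffices to show that each $Z(\gamma_1,\gamma_2)$ is finite (in fact it has at most one point).

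Next I would compute the determinant explicitly. Write $\gamma_1=(x,y)$ and $\gamma_2=(z,w)$, so that $A_\delta\gamma_1=(x-\delta y,\ \delta x+y)$. Then
\begin{equation*}
    \det(A_\delta\gamma_1,\gamma_2)=(x-\delta y)w-(\delta x+y)z=(xw-yz)-\delta(yw+xz)=\det(\gamma_1,\gamma_2)-\delta\langle\gamma_1,\gamma_2\rangle.
\end{equation*}
This is affine in $\delta$, so it has at most one root unless both coefficients vanish. Simultaneous vanishing would mean $\gamma_1$ is both parallel and orthogonal to $\gamma_2$. Since both vectors are nonzero, this is impossible. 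Hence $|Z(\gamma_1,\gamma_2)|\le 1$, and the set of bad $\delta$ is countable.

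I expect no real obstacle here. The only point that needs care is excluding the degenerate case where the polynomial in $\delta$ vanishes identically, and the parallel-versus-orthogonal observation handles it. One could also reframe the argument geometrically: $A_\delta$ is $\sqrt{1+\delta^2}$ times a rotation by $\arctan\delta$, the directions of $\Gamma_1$ and $\Gamma_2$ form countable sets, and only countably many rotation angles map one direction to another. I would present the determinant computation since it is self-contained.
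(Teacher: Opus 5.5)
Your proof is correct, and it reaches the result by a somewhat different route from the paper's.

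The skeleton is the same in both: countably many pairs of lattice directions, each of which can be aligned by at most one $\delta$. The paper does the ``at most one'' step geometrically rather than by computation. It projects $\Gamma_i\setminus\{0\}$ to countable sets $\Lambda_i\subset S^1$ and views $A_\delta$ as $\sqrt{1+\delta^2}$ times the rotation by $\theta(\delta)=\arctan\delta$. It then observes that $R_{\theta}\Lambda_1\cap\Lambda_2\neq\emptyset$ forces $\theta\in\Lambda_2-\Lambda_1$, a countable set. This is exactly the reframing you mention at the end.

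Your identity $\det(A_\delta\gamma_1,\gamma_2)=\det(\gamma_1,\gamma_2)-\delta\langle\gamma_1,\gamma_2\rangle$ is the same fact in coordinates. Its root $\delta=\det(\gamma_1,\gamma_2)/\langle\gamma_1,\gamma_2\rangle$ is the tangent of the angle from $\gamma_1$ to $\gamma_2$. It is, however, more self-contained:
\begin{itemize}
\item it shows directly that each pair contributes at most one $\delta$;
\item it handles the degenerate case explicitly;
\item it works for all real $\delta$, not only $\delta\geq 0$.
\end{itemize}

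The paper's version leaves implicit that $\delta\mapsto\theta(\delta)$ is injective. It also asserts that $\theta(\delta)\in[0,\pi/2)$ is the \emph{unique} angle with $p(A_\delta(\Gamma_1\setminus\{0\}))=R_{\theta(\delta)}\Lambda_1$. That is not literally true when $\Lambda_1$ has rotational symmetries: for $\Gamma_1=\mathbb{Z}^2$, every rotation by an angle with rational tangent preserves $\Lambda_1$. So one must take $\theta(\delta)=\arctan\delta$ specifically. Your computation avoids both issues.

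What the geometric route gives in return is conceptual transparency. Since $A_\delta$ is a similarity, only its rotation angle matters. The argument therefore visibly applies to any one-parameter family of similarities whose angle is countable-to-one in the parameter.
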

    \begin{proof}
        Let $p \colon \mathbb{R}^2\setminus \{0\} \to S^1$ be the usual projection map onto the circle.
        It is clear that two lattices $\Gamma_1',\Gamma_2' < \mathbb{R}^2$ share a line if, and only if, $p(\Gamma_1'\setminus\{0\})\cap p(\Gamma_2'\setminus \{0\}) \neq \emptyset$.
        Denote by $\Lambda_i = p(\Gamma_i \setminus \{0\})$, for $i = 1,2$.
        For each $\delta\geq0$, there exists a unique $\theta(\delta) \in [0,\frac{\pi}{2})$ satisfying $p(A_{\delta}(\Gamma_1\setminus \{0\})) = R_{\theta(\delta)}\Lambda_1$, where $R_{\theta}$ is rotation by $\theta$ on $S^{1}$.

        We wish to show that for at most countably many $\delta >0$, $R_{\theta(\delta)}\Lambda_1 \cap \Lambda_2 \neq \emptyset$.
        But $R_{\theta}\Lambda_1 \cap\Lambda_2 \neq \emptyset$ if, and only if, $\theta \in \Lambda_2-\Lambda_1$, where we think of $S^1 = \mathbb{R}/2\pi\mathbb{Z}$.
        Since $\Lambda_1,\Lambda_2$ are countable, then $\Lambda_2-\Lambda_1$ is countable.
        Therefore, only a countable set of $\delta$ can satisfy $\theta(\delta) \in \Lambda_2 - \Lambda_1$, which shows the Lemma.
    \end{proof}

    \begin{Lemma}
        Aside from possibly countably many $\delta >0$, $\alpha_{\delta}$ has no $C^1$ rank one factors.
    \end{Lemma}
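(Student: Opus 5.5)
Suppose for contradiction that $\alpha_\delta$ has a $C^1$ rank one factor. Following Vinhage, this means a $C^1$ surjection $\pi\colon X\times Y\to Z$ onto a manifold $Z$ carrying a flow $h$, together with a nontrivial linear functional $\ell\colon\mathbb{R}^2\to\mathbb{R}$, such that $\pi(\alpha_\delta(a)x)=h^{\ell(a)}\pi(x)$. The plan is to convert this into a condition on stabilizers of periodic orbits, and then exclude it using \cref{stabrot} for all but countably many $\delta$.

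\textbf{Step 1: a common line in the stabilizers.} Let $L=\ker\ell$, a line in $\mathbb{R}^2$, which is the same for every orbit. For $p\in\text{Per}(\alpha_\delta)$, the image $\pi(p)$ is a periodic point of $h$, or a fixed point.
\begin{itemize}
\item If $\pi(p)$ is periodic with period $T>0$, then $\ell(\text{Stab}_{\alpha_\delta}(p))\subset T\mathbb{Z}$. Since $\ell(\text{Stab}_{\alpha_\delta}(p))$ is the image of a rank two lattice in a cyclic group, $\ker\ell$ contains a nonzero element of the lattice. So $L\cap\text{Stab}_{\alpha_\delta}(p)\neq\{0\}$.
\item If $\pi(p)$ is fixed, I would rule this out. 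The fibers $\pi^{-1}(z)$ are $\alpha_\delta$-invariant, and a fixed point of $h$ would force $\ell$ to act trivially on that fiber. Since $\alpha_\delta$ has dense orbits (it is cone transitive), a fixed point should be impossible unless $\pi$ is constant. This point needs care, and I would either handle it this way or adopt Vinhage's definition, which presumably makes $h$ locally free.
\end{itemize}
The conclusion is that every stabilizer of a periodic orbit shares the fixed line $L$. In particular, $\text{Stab}_{\alpha_\delta}(p_1,q_1)$ and $\text{Stab}_{\alpha_\delta}(p_2,q_2)$ share a line.

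\textbf{Step 2: compute the two stabilizers.} We have $\text{Stab}_{\alpha_\delta}(p)=\mathcal{P}^{\varphi^\delta}_p(\text{Stab}_{\alpha_0}(p))$. By the computation in the previous lemma,
\[
\text{Stab}_{\alpha_\delta}(p_1,q_1)=A_\delta\Gamma_1,\qquad \text{Stab}_{\alpha_\delta}(p_2,q_2)=\Gamma_2,
\]
where $\Gamma_1=l(p_1)\mathbb{Z}\oplus l(q_1)\mathbb{Z}$ and $\Gamma_2=l(p_2)\mathbb{Z}\oplus l(q_2)\mathbb{Z}$. These lattices do not depend on $\delta$. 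By \cref{stabrot}, $A_\delta\Gamma_1$ and $\Gamma_2$ share a line for at most countably many $\delta$. Outside that countable set, Step 1 gives a contradiction, so $\alpha_\delta$ has no $C^1$ rank one factor. The admissible range of small $\delta$ is uncountable, so infinitely many $\delta$ remain.

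\textbf{Main obstacle.} The delicate part is Step 1: extracting the lattice-sharing-a-line condition from the precise definition of a $C^1$ rank one factor. Specifically, one must check that the factor map intertwines the action via a linear functional, and that $\pi(p)$ is genuinely periodic rather than fixed. If Vinhage's definition is instead phrased as a factor through a surjection $\mathbb{R}^2\to\mathbb{R}$, the same argument goes through. If the definition allows a nonconstant factor in which $h$ has fixed points, I would argue using a dense orbit together with local freeness of the induced flow to exclude them.
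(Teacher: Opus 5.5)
Your Step 2, and the lattice argument in the periodic case of Step 1, are exactly the final step of the paper's proof. The trouble is the case you defer, where $\pi(p)$ is a fixed point of $h$: it is not a side issue but the entire difficulty.

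If $\ker\sigma\cap\text{Stab}_{\alpha_\delta}(p)=\{0\}$, then $\sigma(\text{Stab}_{\alpha_\delta}(p))$ is a dense subgroup of $\mathbb{R}$ fixing $\pi(p)$. So $\pi(p)$ is automatically a fixed point of $h$. Hence, for every $\delta$ outside the countable set of \cref{stabrot}, a factor can only exist if $\pi(p_1,q_1)$ or $\pi(p_2,q_2)$ is fixed by $h$, and your argument says nothing about that situation.

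Your proposed exclusion does not work.
\begin{itemize}
\item For a fixed point $z$, $\pi^{-1}(z)$ is merely a closed $\alpha_\delta$-invariant set. Such sets can be proper even when dense orbits exist; a periodic orbit is one.
\item Flows with dense orbits can have fixed points. For example, multiply the generator of a transitive Anosov flow by a function vanishing at one point.
\item Local freeness of $h$ is not something the paper takes from the definition; it derives it. Your fallback of ``using local freeness of the induced flow'' is circular.
\end{itemize}

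The missing idea is the Lyapunov data, which your proposal never uses. The paper first shows that every coarse Lyapunov bundle lies in $\ker D\pi$. Suppose some $E^{\alpha_\delta}_i(x)$ were transverse to $\ker D_x\pi$. Then Lemma 4.3 of Vinhage gives a continuous metric on $E^{\alpha_\delta}_i$ for which $\ker\sigma$ acts isometrically. This forces $\ker\lambda^{i,\alpha_\delta}_p=\ker\sigma$ for all periodic $p$. That contradicts the non-proportionality of $\lambda^{i,\alpha_\delta}_{(p_1,q_1)}$ and $\lambda^{i,\alpha_\delta}_{(p_2,q_2)}$, which comes from $A_\delta$ and \cref{changeofcoordLyap}.

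Next, Lemma 4.1 of Vinhage shows that $h$ is a transitive circle rotation, which has no fixed points. Only at that point does your stabilizer argument apply to both $(p_1,q_1)$ and $(p_2,q_2)$.
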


    \begin{proof}
        Let $h^t \colon Z \to Z$ be a nontrivial $C^1$ rank one factor of $\alpha_{\delta}$ with corresponding homomorphism $\sigma \colon \mathbb{R}^2 \to \mathbb{R}$ and $C^1$ projection map $\pi \colon M \to Z$.
        Suppose there exists $i \in I$ and $x \in M$ such that $E^{\alpha_{\delta}}_i(x) \cap \ker D_x\pi =\{0\}$.
        By Lemma 4.3 of \cite{Vinhage}, there exists a continous metric on $E_i^{\alpha_{\delta}}$ such that for all $a \in \ker \sigma$, $D\alpha(a)|_{E_i^{\alpha_{\delta}}}$ is an isometry.
        This implies that $\ker \lambda^{i,\alpha_{\delta}}_p = \ker \sigma$, for all $p \in \text{Per}(\alpha)$.
        However, this is impossible as $\ker \lambda^{i,\alpha_{\delta}}_{(p_1,q_1)}$ and $\ker \lambda^{i,\alpha_{\delta}}_{(p_2,q_2)}$ are not proportional.
        Therefore, $\bigoplus_{i \in I} E^{i,\alpha_{\delta}} \subset \ker D\pi$.
        By Lemma 4.1 of \cite{Vinhage}, $h^t$ must be a transitive circle rotation.

        In particular, for each $p \in \text{Per}(\alpha_{\delta})$, $\sigma (\text{Stab}_{\alpha_{\delta}}(p)) \subset \text{Stab}_{h^t}(\pi(p))$ which is a lattice in $\mathbb{R}$.
        This implies that $\ker \sigma \cap \text{Stab}_{\alpha_{\delta}}(p) \neq \{0\}$, that is, there exists a line $L \subset \mathbb{R}^2$ passing through the origin such that $L \cap \text{Stab}_{\alpha_{\delta}}(p) \neq \{0\}$, for all $p \in \text{Per}(\alpha_{\delta})$.
        In particular, $\text{Stab}_{\alpha_{\delta}}(p_1,q_1)$ and $\text{Stab}_{\alpha_{\delta}}(p_2,q_2)$ share a line.
        Following the notation of \cref{stabrot}, $\text{Stab}_{\alpha_{\delta}}(p_1,q_1) = A_{\delta}\text{Stab}_{\alpha_0}(p_1,q_1)$ and $\text{Stab}_{\alpha_{\delta}}(p_2,q_2) = \text{Stab}_{\alpha_0}(p_1,q_1)$.
        By \cref{stabrot}, this is only possible for at most countably many $\delta>0$,
        which proves the Lemma.
    \end{proof}

\section*{Acknowledgements}
We are grateful to Kurt Vinhage for proposing this problem, for the fruitful conversations and for the comments on an earlier draft of this paper.
The first and second author would like to thank their respective PhD advisors Aaron Brown and David Fisher for their guidance throughout this project.
The authors would also like to thank Homin Lee and Ralf Spatzier for the conversations had and James Marshall Reber for comments on an earlier draft of this paper.

This research was supported in part by grants from the NSF (DMS-2235451) and Simons Foundation (MPS-NITMB-00005320) to the NSF-Simons National Institute for Theory and Mathematics in Biology (NITMB).
This material is also based upon work supported by the National Science Foundation under Grant
No. DMS-2020013 and DMS-2400191.

\bibliography{references}

@article {KatokSpatzier,
    AUTHOR = {Katok, A. and Spatzier, R. J.},
     TITLE = {First cohomology of {A}nosov actions of higher rank abelian
              groups and applications to rigidity},
   JOURNAL = {Inst. Hautes \'Etudes Sci. Publ. Math.},
  FJOURNAL = {Institut des Hautes \'Etudes Scientifiques. Publications
              Math\'ematiques},
    NUMBER = {79},
      YEAR = {1994},
     PAGES = {131--156},
      ISSN = {0073-8301,1618-1913},
   MRCLASS = {58F15 (22E40)},
  MRNUMBER = {1307298},
MRREVIEWER = {Alexander\ Starkov},
       URL = {http://www.numdam.org/item?id=PMIHES_1994__79__131_0},
}

@article {Vinhage,
    AUTHOR = {Vinhage, K.},
     TITLE = {Instability for rank one factors of product actions},
   JOURNAL = {J. Mod. Dyn.},
  FJOURNAL = {Journal of Modern Dynamics},
    VOLUME = {21},
      YEAR = {2025},
     PAGES = {607--620},
      ISSN = {1930-5311,1930-532X},
   MRCLASS = {37C85 (37A17 37C15 37C20 37C79 37D20 37D40)},
  MRNUMBER = {4973403},
       DOI = {10.3934/jmd.2025013},
       URL = {https://doi.org/10.3934/jmd.2025013},
}

@article {LopesThieullen,
    AUTHOR = {Lopes, A. O. and Thieullen, Ph.},
     TITLE = {Sub-actions for {A}nosov flows},
   JOURNAL = {Ergodic Theory Dynam. Systems},
  FJOURNAL = {Ergodic Theory and Dynamical Systems},
    VOLUME = {25},
      YEAR = {2005},
    NUMBER = {2},
     PAGES = {605--628},
      ISSN = {0143-3857,1469-4417},
   MRCLASS = {37D20},
  MRNUMBER = {2129112},
MRREVIEWER = {Oliver\ Jenkinson},
       DOI = {10.1017/S0143385704000732},
       URL = {https://doi.org/10.1017/S0143385704000732},
}

@article {Sigmund,
    AUTHOR = {Sigmund, K.},
     TITLE = {On the space of invariant measures for hyperbolic flows},
   JOURNAL = {Amer. J. Math.},
  FJOURNAL = {American Journal of Mathematics},
    VOLUME = {94},
      YEAR = {1972},
     PAGES = {31--37},
      ISSN = {0002-9327,1080-6377},
   MRCLASS = {28A65 (54H20)},
  MRNUMBER = {302866},
MRREVIEWER = {Robert\ Ellis},
       DOI = {10.2307/2373591},
       URL = {https://doi.org/10.2307/2373591},
}

@article {SpatzierVinhage,
    AUTHOR = {Spatzier, R. and Vinhage, K.},
     TITLE = {Cartan actions of higher rank abelian groups and their
              classification},
   JOURNAL = {J. Amer. Math. Soc.},
  FJOURNAL = {Journal of the American Mathematical Society},
    VOLUME = {37},
      YEAR = {2024},
    NUMBER = {3},
     PAGES = {731--859},
      ISSN = {0894-0347,1088-6834},
   MRCLASS = {37C85 (37C15 37C40 37D20)},
  MRNUMBER = {4736528},
MRREVIEWER = {Sanghoon\ Kwon},
       DOI = {10.1090/jams/1033},
       URL = {https://doi.org/10.1090/jams/1033},
}

@incollection {LTDiffeos,
    AUTHOR = {Lopes, A. O. and Thieullen, Philippe},
     TITLE = {Sub-actions for {A}nosov diffeomorphisms},
      NOTE = {Geometric methods in dynamics. II},
   JOURNAL = {Ast\'erisque},
  FJOURNAL = {Ast\'erisque},
    NUMBER = {287},
      YEAR = {2003},
     PAGES = {xix, 135--146},
      ISSN = {0303-1179,2492-5926},
   MRCLASS = {37D20 (37A20)},
  MRNUMBER = {2040005},
MRREVIEWER = {Viorel\ Ni\c tic\u a},
}

@article {KatokSpatziermeasure,
    AUTHOR = {Katok, A. and Spatzier, R. J.},
     TITLE = {Invariant measures for higher-rank hyperbolic abelian actions},
   JOURNAL = {Ergodic Theory Dynam. Systems},
  FJOURNAL = {Ergodic Theory and Dynamical Systems},
    VOLUME = {16},
      YEAR = {1996},
    NUMBER = {4},
     PAGES = {751--778},
      ISSN = {0143-3857,1469-4417},
   MRCLASS = {58F11},
  MRNUMBER = {1406432},
MRREVIEWER = {Scot\ Adams},
       DOI = {10.1017/S0143385700009081},
       URL = {https://doi.org/10.1017/S0143385700009081},
}

@article {KSsmoothrigid,
    AUTHOR = {Katok, A. and Spatzier, R. J.},
     TITLE = {Differential rigidity of {A}nosov actions of higher rank
              abelian groups and algebraic lattice actions},
   JOURNAL = {Tr. Mat. Inst. Steklova},
  FJOURNAL = {Trudy Matematicheskogo Instituta Imeni V. A. Steklova},
    VOLUME = {216},
      YEAR = {1997},
     PAGES = {292--319},
      ISSN = {0371-9685,3034-1809},
   MRCLASS = {58F15 (22E40 58F10)},
  MRNUMBER = {1632177},
MRREVIEWER = {Raul\ Quiroga-Barranco},
}

@article {HPS,
    AUTHOR = {Hirsch, M. W. and Pugh, C. C. and Shub, M.},
     TITLE = {Invariant manifolds},
   JOURNAL = {Bull. Amer. Math. Soc.},
  FJOURNAL = {Bulletin of the American Mathematical Society},
    VOLUME = {76},
      YEAR = {1970},
     PAGES = {1015--1019},
      ISSN = {0002-9904},
   MRCLASS = {58F15 (57D30)},
  MRNUMBER = {292101},
MRREVIEWER = {A. Katok},
       DOI = {10.1090/S0002-9904-1970-12537-X},
       URL = {https://doi.org/10.1090/S0002-9904-1970-12537-X},
}

@article {LM,
    AUTHOR = {de la Llave, R. and Moriy\'on, R.},
     TITLE = {Invariants for smooth conjugacy of hyperbolic dynamical
              systems. {IV}},
   JOURNAL = {Comm. Math. Phys.},
  FJOURNAL = {Communications in Mathematical Physics},
    VOLUME = {116},
      YEAR = {1988},
    NUMBER = {2},
     PAGES = {185--192},
      ISSN = {0010-3616,1432-0916},
   MRCLASS = {58F15},
  MRNUMBER = {939045},
MRREVIEWER = {Georgi\u i\ Osipenko},
       URL = {http://projecteuclid.org/euclid.cmp/1104161299},
}

@article {GRH,
    AUTHOR = {Gogolev, A. and Rodriguez Hertz, F.},
     TITLE = {Smooth rigidity for codimension one {A}nosov flows},
   JOURNAL = {Proc. Amer. Math. Soc.},
  FJOURNAL = {Proceedings of the American Mathematical Society},
    VOLUME = {151},
      YEAR = {2023},
    NUMBER = {7},
     PAGES = {2975--2988},
      ISSN = {0002-9939,1088-6826},
   MRCLASS = {37D20},
  MRNUMBER = {4579371},
MRREVIEWER = {Italo\ Cipriano},
       DOI = {10.1090/proc/16177},
       URL = {https://doi.org/10.1090/proc/16177},
}

@article {GRH2,
    AUTHOR = {Gogolev, A. and Rodriguez Hertz, F.},
     TITLE = {Smooth rigidity for higher-dimensional contact {A}nosov flows},
      NOTE = {Reprint of Ukra\"in. Mat. Zh. {\bf 75} (2023), no. 9,
              1195--1203},
   JOURNAL = {Ukrainian Math. J.},
  FJOURNAL = {Ukrainian Mathematical Journal},
    VOLUME = {75},
      YEAR = {2024},
    NUMBER = {9},
     PAGES = {1361--1370},
      ISSN = {0041-5995,1573-9376},
   MRCLASS = {37E35 (37D20 37J55 53D25)},
  MRNUMBER = {4720721},
}

@article{GRH3,
    author = {Gogolev, A. and Rodriguez Hertz, F.},
    title = {Smooth rigidity for 3-dimensional volume preserving Anosov flows and weighted marked length spectrum rigidity},
    journal = {arXiv:2210.02295},
    year = {2022}
}

@article{GLRH,
    author = {Gogolev, A. and Leguil, M. and Rodriguez Hertz, F.},
    title = {Smooth rigidity for 3-dimensional dissipative Anosov flows},
    journal = {arXiv:2510.23872},
    year= {2025}
}

@article {Journe,
    AUTHOR = {Journ\'e, J.-L.},
     TITLE = {A regularity lemma for functions of several variables},
   JOURNAL = {Rev. Mat. Iberoamericana},
  FJOURNAL = {Revista Matem\'atica Iberoamericana},
    VOLUME = {4},
      YEAR = {1988},
    NUMBER = {2},
     PAGES = {187--193},
      ISSN = {0213-2230},
   MRCLASS = {58F15 (26B05 41A10 57R35 58F18)},
  MRNUMBER = {1028737},
MRREVIEWER = {Steven\ E.\ Hurder},
       DOI = {10.4171/RMI/69},
       URL = {https://doi.org/10.4171/RMI/69},
}

@article {RHJMD,
    AUTHOR = {Rodriguez Hertz, F.},
     TITLE = {Global rigidity of certain abelian actions by toral
              automorphisms},
   JOURNAL = {J. Mod. Dyn.},
  FJOURNAL = {Journal of Modern Dynamics},
    VOLUME = {1},
      YEAR = {2007},
    NUMBER = {3},
     PAGES = {425--442},
      ISSN = {1930-5311,1930-532X},
   MRCLASS = {37C85 (37C15 37D20 37D25)},
  MRNUMBER = {2318497},
MRREVIEWER = {Bachir\ Bekka},
       DOI = {10.3934/jmd.2007.1.425},
       URL = {https://doi.org/10.3934/jmd.2007.1.425},
}

@article {Cao,
    AUTHOR = {Cao, Y.},
     TITLE = {Non-zero {L}yapunov exponents and uniform hyperbolicity},
   JOURNAL = {Nonlinearity},
  FJOURNAL = {Nonlinearity},
    VOLUME = {16},
      YEAR = {2003},
    NUMBER = {4},
     PAGES = {1473--1479},
      ISSN = {0951-7715,1361-6544},
   MRCLASS = {37D20 (37A30)},
  MRNUMBER = {1986306},
MRREVIEWER = {Benoit\ Saussol},
       DOI = {10.1088/0951-7715/16/4/316},
       URL = {https://doi.org/10.1088/0951-7715/16/4/316},
}
\bibliographystyle{alpha}
\end{document}